\def\le{\leqslant}
\def\ge{\geqslant}
\def\dd#1{{\,\mathrm d}#1}
\def\ve{\varepsilon}
\def\tr#1{\left\lfloor #1\right\rfloor}
\def\pf{\noindent {\em Proof.}\ }
\def\qed{{\quad\rule{1mm}{3mm}\,}}
\def\JS{\mathscr{J\!\!S}}
\newtheorem{thm}{Theorem}[section]
\newtheorem{prop}[thm]{Proposition}
\newtheorem{Def}{Definition}
\title{An analytic approach to the asymptotic variance
of trie statistics and related structures}
\author{Michael Fuchs\\
    Department of Applied Mathematics\\
    National Chiao Tung University\\
    Hsinchu, 300\\ Taiwan
\and Hsien-Kuei Hwang \\
    Institute of Statistical Science\\
    Institute of Information Science\\
    Academia Sinica\\
    Taipei 115\\
    Taiwan
\and Vytas Zacharovas \\
    Dept. Mathematics \&\ Informatics\\
    Vilnius University\\
    Lithuania}
\date{\today}
\begin{document}
\maketitle

\centerline{\emph{Dedicated to the memory of Philippe Flajolet}}

\begin{abstract}
We develop analytic tools for the asymptotics of general trie
statistics, which are particularly advantageous for clarifying the
asymptotic variance. Many concrete examples are discussed for which
new Fourier expansions are given. The tools are also useful for
other splitting processes with an underlying binomial distribution.
We specially highlight Philippe Flajolet's contribution in the
analysis of these random structures.
\end{abstract}

\section{Introduction}

Coin-flipping is one of the simplest ways of resolving a conflict,
deciding between two alternatives, and generating random phenomena.
It has been widely adopted in many daily-life situations and
scientific disciplines. There exists even a term ``flippism.'' The
curiosity of understanding the randomness behind throwing coins or
dices was one of the motivating origins of early probability theory,
culminating in the classical book ``\emph{Ars Conjectandi}'' by
Jacob Bernoulli, which was published exactly three hundred years ago
in 1713 (many years after its completion; see
\cite{polasek00a,shafer96a}). When flipped successively, one
naturally encounters the binomial distribution, which is pervasive
in many splitting processes and branching algorithms whose analysis
was largely developed and clarified through Philippe Flajolet's
works, notably in the early 1980s, an important period marking the
upsurgence of the use of complex-analytic tools in the Analysis of
Algorithms.

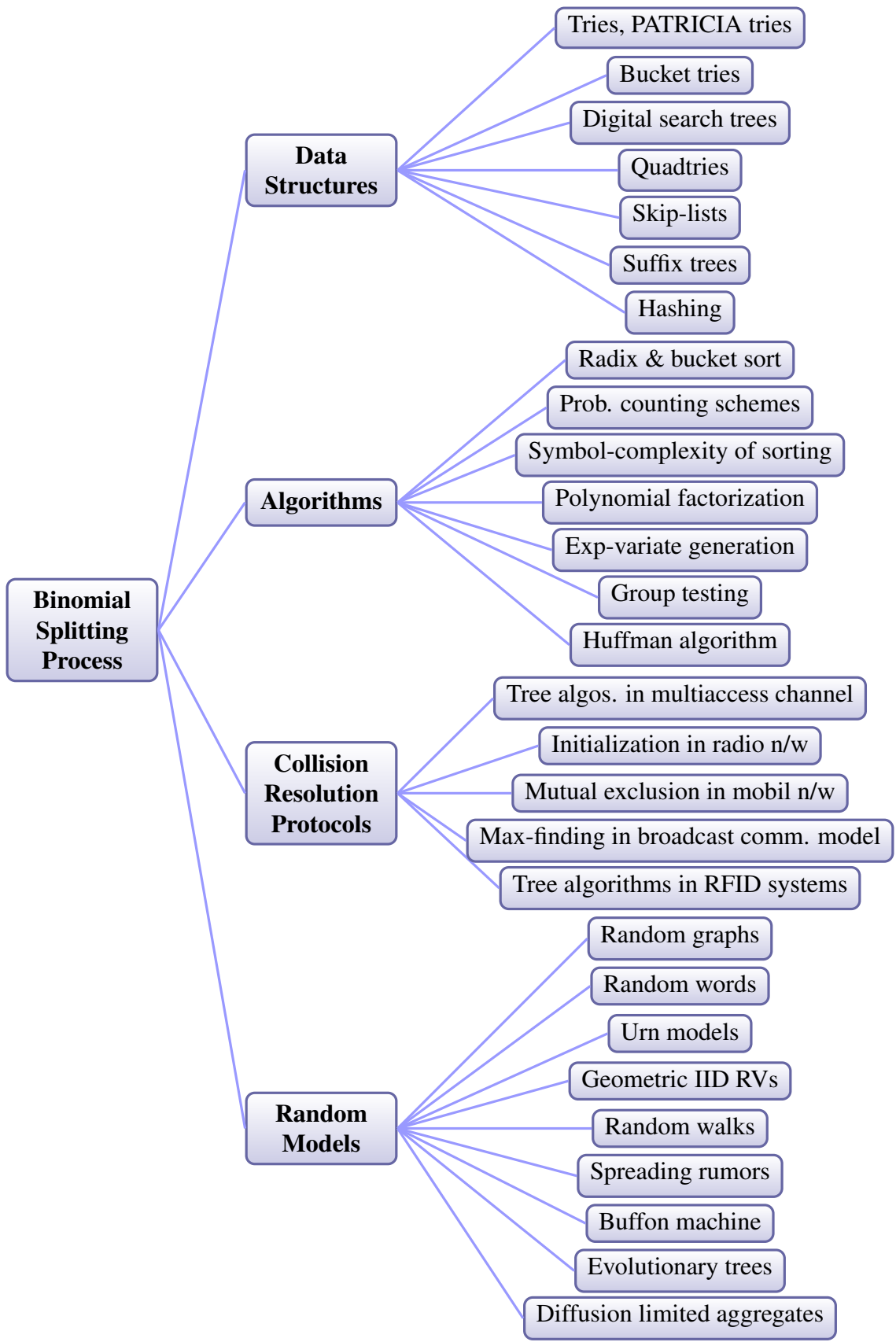
\begin{figure}
\begin{center}
\begin{tikzpicture}[
    every tree node/.style={anchor=base west},
    grow=right,
    level 1/.style={sibling distance=4.5cm,level distance=4cm},
    level 2/.style={sibling distance=0.8cm, level distance=6cm},
    edge from parent/.style={very thick,draw=blue!40!white,
        shorten >=0pt, shorten <=0pt},
    edge from parent path={(\tikzparentnode.east) --
    (\tikzchildnode.west)},
    every node/.style={text centered, inner sep=2mm},
    punkt/.style={rectangle, rounded corners, shade, top color=white,
    bottom color=blue!50!black!20, draw=blue!40!black!60, very
    thick }
    ]
\node[punkt, text width=5em] {\bf Binomial Splitting Process}
    %Random Models
    child[sibling distance=5.6cm] {
        node[punkt, rectangle,text width=5em] {\bf Random Models}
        child {
            node[punkt, text height=0.5em]
            {Diffusion limited aggregates}
        }
        child {
            node[punkt, text height=0.5em] {Evolutionary trees}
        }
        child {
            node[punkt, text height=0.5em] {Buffon machine}
        }
        child {
            node[punkt, text height=0.5em] {Spreading rumors}
        }
        child {
            node[punkt, text height=0.5em] {Random walks}
        }
        child {
            node[punkt, text height=0.5em] {Geometric IID RVs}
        }
        child {
            node[punkt, text height=0.5em] {Urn models}
        }
        child {
            node[punkt, text height=0.5em] {Random words}
        }
        child {
            node[punkt, text height=0.5em] {Random graphs}
        }
    }
    child[sibling distance=5.5cm] {
            node[punkt, text width=5em]
            {\bf Collision Resolution Protocols}
        child {
            node[punkt, text height=0.5em]
            {Tree algorithms in RFID systems}
        }
        child {
            node[punkt, text height=0.5em]
            {Max-finding in broadcast comm. model}
        }
        child {
            node[punkt, text height=0.5em]
            {Mutual exclusion in mobil n/w}
        }
        child {
            node[punkt, text height=0.5em]
            {Initialization in radio n/w}
        }
        child {
            node[punkt, text height=0.5em]
            {Tree algos. in multiaccess channel}
        }
        }
    %Algorithms
    child[sibling distance=4.5cm] {
        node[punkt, rectangle,text width=5em,yshift=-.1cm]
        {\bf Algorithms}
        child {
            node[punkt, text height=0.5em] {Huffman algorithm}
        }
        child {
            node[punkt, text height=0.5em] {Group testing}
        }
        child {
            node[punkt, text height=0.5em] {Exp-variate generation}
        }
        child {
            node[punkt, text height=0.5em] {Polynomial factorization}
        }
        child {
            node[punkt, text height=0.5em]
            {Symbol-complexity of sorting}
        }
        child {
            node[punkt, text height=0.5em]
            {Prob. counting schemes}
        }
        child {
            node[punkt, text height=0.5em] {Radix \& bucket sort}
        }
    }
    %Data Structures
    child[sibling distance=4.7cm] {
        node[punkt, rectangle,text width=5em,yshift=.7cm]
        {\bf Data Structures}
        child {
            node[punkt, text height=0.5em] {Hashing}
        }
        child {
            node[punkt, text height=0.5em] {Suffix trees}
        }
        child {
            node[punkt, text height=0.5em] {Skip-lists}
        }
        child {
            node[punkt, text height=0.5em] {Quadtries}
        }
        child {
            node[punkt, text height=0.5em] {Digital search trees}
        }
        child {
            node[punkt, text height=0.5em] {Bucket tries}
        }
        child{
            node[punkt,text height=0.5em,xshift=0]
            {Tries, PATRICIA tries}
        }
    };
\end{tikzpicture}
\end{center}
\vspace*{-.3cm} \caption{\emph{A tree rendering of the diverse
themes pertinent to binomial splitting processes.}}\label{fig-bsps}
\end{figure}

\paragraph{Technical content of this paper.}
This paper is a sequel to \cite{hwang10a} and we will develop an
analytic approach that is especially useful for characterizing the
asymptotics of the mean and the variance of additive statistics of
random tries under the Bernoulli model; such statistics can often
be computed recursively by
\begin{align} \label{Xn-rr}
    X_n \stackrel{d}{=} X_{I_n}+ X_{n-I_n}^* + T_n,
\end{align}
with suitable initial conditions, where $T_n$ is known, $X_n^*$ is
an independent copy of $X_n$ and $I_n$ is the binomial distribution
with mean $pn$, $0<p<1$.

Many asymptotic approximations are known in the literature for the
variance of $X_n$, which has in many cases of interest the pattern
\begin{align} \label{var-Xn}
    \frac{\mathbb{V}(X_n)}{n} = c\log n +c'
    + \left\{\begin{array}{ll}
        P(\varpi\log n),& \text{if }
        \frac{\log p}{\log q}\in\mathbb{Q}\\
        0, & \text{if }\frac{\log p}{\log q}\not\in\mathbb{Q},
    \end{array}\right\} +o(1),
\end{align}
where $c$ may be zero, $\varpi$ depends on the ratio $\frac{\log
p}{\log q}$ and $P(x)=P(x+1)$ is a bounded periodic function.
However, known expressions in the literature for the
periodic function $P$ are rare due to the complexity of the problem,
and are often either less transparent, or less explicit, or too messy
to be stated. In many situations they are given in the form of
one periodic function minus the square of the other.
% The only cases we found for linear shape
% measures\footnote{Logarithmic shape measures such as the depth of
% digital trees have a different asymptotic pattern and often more
% precise expressions are known.} where an explicit expression for $P$
% (not of the form of one periodic function minus the square of
% another one) was derived is the paper by Kirschenhofer and Prodinger
% \cite{kirschenhofer91a} for the variance of the size of tries; see
% Section~\ref{sec-tries} for more details.
Our approach, in contrast, provides not only a systematic derivation
of the asymptotic approximation \eqref{var-Xn} but also a simpler,
explicit, independent expression for $P$, notably in the symmetric
case ($p=q$). Further refinement of the
$o(1)$-term lies outside the scope of this paper and can be dealt
with by the approach developed by Flajolet et al.\ in \cite{PF208}.

\paragraph{Binomial splitting processes.}
In general, the simple splitting idea behind the recursive random
variable \eqref{Xn-rr} ($0$ goes to the left and $1$ goes to the
right) has also been widely adopted in many different modeling
processes, which, for simplicity, will be vaguely referred to as
``\emph{binomial splitting processes}'' (BSPs), where binomial
distribution and some of its extensions are naturally involved in
the analysis; see Figure~\ref{fig-bsps} for concrete examples of
BSPs that are related to our analysis here. For convenience of
presentation, we roughly group these structures in four categories:
Data Structures, Algorithms, Collision Resolution Protocols, and
Random Models.

To see the popularity of BSPs in different areas, we start from the
recurrence ($q=1-p$)
\begin{align} \label{an-rr}
    a_n = \sum_{0\le k\le n}\pi_{n,k}\left(
    a_k + a_{n-k}\right) + b_n, \quad\text{where}
    \quad\pi_{n,k} := \binom{n}{k}p^k q^{n-k},
\end{align}
which results, for example, from \eqref{Xn-rr} by taking
expectation. Here the ``toll-function'' $b_n$ may itself involve
$a_j$ ($j=0,1,\dots$) but with multipliers that are exponentially
small.

From an analytic point of view, the trie recurrence \eqref{an-rr}
translates for the Poisson generating function
\begin{align}\label{pgf}
    \tilde{f}(z) := e^{-z}\sum_n \frac{a_n}{n!}\, z^n,
\end{align}
into the \emph{trie functional equation}
\begin{align} \label{an-pgf}
    \tilde{f}(z)=\tilde{f}(pz)+\tilde{f}(qz)+\tilde{g}(z),
\end{align}
with suitable initial conditions. Such a functional equation is a
special case of the more general pattern
\begin{align}\label{gfe}
    \sum_{0\le j\le b} \binom{b}{j}
    \tilde{f}^{(j)}(z)
    = \alpha \tilde{f}(pz+\lambda)
    +\beta \tilde{f}(qz+\lambda)+\tilde{g}(z),
\end{align}
where $b=0,1,\dots$, and $\tilde{g}$ itself may involve $\tilde{f}$
but with exponentially small factors. When $b=0$, one has a pure
functional equation,
\begin{align} \label{ab-pgf}
    \tilde{f}(z)
    = \alpha \tilde{f}(pz+\lambda)
    +\beta \tilde{f}(qz+\lambda)+\tilde{g}(z),
\end{align}
while when $b\ge1$, one has a differential-functional equation.

It turns out that the equation \eqref{gfe} covers almost all cases
we collected (a few hundred of publications) in the analysis of BSPs
the majority of which correspond to the case $b=\lambda=0$. The
cases when $b=0$ and $\lambda>0$ are thoroughly treated in
\cite{PF049,PF054,PF055,jacquet90b}, and the cases when
$b\ge1$ are discussed in detail in \cite{hwang10a} (see also the
references cited there). We focus on
$b=\lambda=0$ in this paper. Since the literature abounds with
equation \eqref{an-pgf} or the corresponding recurrence
\eqref{an-rr}, we contend ourselves with listing below some
references that are either standard, representative or more closely
connected to our study here. See also
\cite{drmota11a,erdos87a,fredman74a} for some non-random contexts
where \eqref{an-pgf} appeared.

\begin{description}
\item[\emph{Data Structures.}]
Tries: \cite{knuth98a,mahmoud92a,szpankowski01a};
PATRICIA tries: \cite{jacquet98a,knuth98a,szpankowski90a};
Quadtries and $k$-d tries: \cite{PF058,fuchs11a};
Hashing: \cite{fagin79a,mendelson82a,PF036,PF037};
Suffix trees: \cite{jacquet95a,szpankowski01a}.
%%%% add DAWGs: Blumer 1989 and Raffinot 1999 ?

\item[\emph{Algorithms.}]
Radix-exchange sort: \cite{knuth98a}; Bucket selection and bucket
sort: \cite{PF159,christophi01a}; Probabilistic counting schemes:
\cite{PF048,PF050,PF174,PF180,PF193};
%Symbol complexity of sorting algorithms:
Polynomial factorization: \cite{PF036}; Exponential variate
generation: \cite{PF060}; Group testing: \cite{goodrich08a}; Random
generation: \cite{PF200,ressler92a}.

\item[\emph{Collision resolution protocols.}]
Tree algorithms in multiaccess channel: \cite{biglieri07a,
massey81a,molle93a,PF049,PF054,PF055,PF071,wagner09a};
%%%% G. B. Giannakis papers on tree algorithms
%%%% and Gino Peeters' 2009 PhD Thesis ==> many trie recurrences
Initialization in radio networks: \cite{myoupo03a,shiau05a}; Mutual
exclusion in mobil networks: \cite{mellier04a}; Broadcast
communication model: \cite{yang91a,grabner02a,chen03a}; Leader
election: \cite{fill96a,janson97a,prodinger93a}; Tree algorithms in
RFID systems: \cite{hush98a, namboodiri10a};%,popovski07a}.
%%%% these last two papers have something to analyze
%%%% add HFC systems? (van den Broek et al. 2003)

\item[\emph{Random models.}]
Random graphs: \cite{banderier13a,gelenbe86a,simon88a};
%%%% stepanov 1969; Random words: Urn models:  Random walks:
Geometric IID RVs (or order statistics):
\cite{eisenberg08a,fuchs12b,louchard12a}; Cantor distributions:
\cite{cristea07a,grabner96a}; %, knopfmacher96a};
%Dynamic systems: \cite{PF161,vallee01a};
Evolutionary trees: \cite{aldous96a, maddison91a}; Diffusion limited
aggregates: \cite{bradley85a,majumdar03a}; Generalized Eden model on
trees: \cite{dean06a}.
%%% Pittel and Rubin's paper?
%%% # of Q's to id distinct objects

\end{description}

Asymptotics of most of the BSPs can nowadays be handled by standard
\emph{analytic} techniques, which we owe largely to Flajolet for
initiating and laying down the major groundwork. We focus in this
paper on analytic methods. Many elementary and probabilistic methods
have also been proposed in the literature with success;
see, for example, \cite{devroye86a,devroye05a,janson12a,
neininger04a,szpankowski01a} for more information.

\paragraph{Flajolet's works on BSPs.}
We begin with a brief summary of Flajolet's works in the
analysis of BSPs. For more information, see the two chapter
introductions on \emph{Digital Trees} (by Cl\'ement and Ward) and on
\emph{Communication Protocols} (by Jacquet) in \emph{Philippe
Flajolet's Collected Papers, Volume III} (edited by Szpankowski).

Flajolet published his first paper related to BSP in June 1982 in a
paper jointly written with Dominique Sotteau entitled\footnote{Note
that the word ``partitioning'' is spelled as ``partionning'' in the
title of \cite{PF034}, and as ``partitionning'' in the paper.} ``A
recursive partitioning process of computer science'' (see
\cite{PF034}). This first paper is indeed a review paper and starts
with the sentence:
\begin{quote}
    \textsl{We informally review some of the algebraic and analytic
    techniques involved in investigating the properties of a
    combinatorial process that appears in very diverse contexts in
    computer science including digital sorting and searching,
    dynamic hashing methods, communication protocols in local
    networks and some polynomial factorization algorithms.}
\end{quote}
They first brought the attention of the generality of the same
splitting principle in diverse contexts in their Introduction,
followed by a systematic development of generating functions under
different models (\cite[Sec.\ 2: \emph{Algebraic methods}]{PF034}).
Then a general introduction was given of the saddle-point method and
Mellin transform to the Analysis of Algorithms (\cite[Sec.\ 3:
\emph{Analytic methods}]{PF034}). They concluded in the last section
by giving applications of these techniques to one instance in each
of the four areas mentioned above.

Such a synergistic germination of diverse research ideas
\begin{center}
\begin{tikzpicture}[scale=0.85]
\hspace*{-.3cm}
\tikzset{
   s1/.style={
   rectangle,
   minimum size=5mm,
   minimum height=10mm,
   rounded corners=4mm,
   very thick,
   text centered,
   draw=white!10!black!90,
   top color=white,
   bottom color=red!5,
   text width=2cm,
   text centered,
}, s2/.style={
   rectangle,
   minimum size=5mm,
   minimum height=13mm,
   rounded corners=6mm,
   very thick,
   text centered,
   draw=white!10!black!90,
   top color=white,
   bottom color=white!50!blue!10,
   text width=3cm,}
   ,}
\node [s2] (m1) at (0,0) {\small Algorithms \& Applications};%
\node [s1] (m2) at (-3,-2) {\small Algebraic Methods};%
\node [s1] (m3) at (3,-2) {\small Analytic Methods};%
\path[line width=1pt,draw=white!10!black!90,->] (m1) edge (m2);%
\path[line width=1pt,draw=white!10!black!90,->] (m2) edge (m3);%
\path[line width=1pt,draw=white!10!black!90,->] (m3) edge (m1);%
\end{tikzpicture}
\end{center}
later expanded into a wide spectrum of applications and research
networks (see Figure~\ref{fig-pf} for a plot of BSP-related themes).
It was also fully developed and explored, and evolved into his
theory of \emph{Analytic Combinatorics}. Many of these
\emph{objects} become in his hands a \emph{subject} of interest, and
many follow-up papers continued and extended with much ease.

\begin{figure}[!h]
\begin{center}
\begin{tikzpicture}[scale=1]
\path[mindmap,concept color=brown!15,text=black]
node[concept] {\large \bf Binomial Splitting Processes}
    child[concept color=red!15,grow=45,xshift=.2cm] {
        node[concept,minimum size=2.5cm] {\bf Methodology}
        child[grow=135] { node[concept,yshift=0cm,minimum size=2cm]
        {Singularity analysis} }
        child[grow=90] { node[concept,yshift=0cm,minimum size=2cm]
        {Saddle-point method} }
        child[grow=45] { node[concept,yshift=0cm,minimum size=2cm]
        {Complex analysis} }
        child[grow=0] { node[concept,xshift=0cm,minimum size=2cm]
        {Mellin transform} }
        child[grow=-45] { node[concept,minimum size=2cm]
        {Bernoulli sums} }
    }
    child[concept color=blue!20,grow=-45,xshift=0.2cm] {
        node[concept,minimum size=2.5cm] {\bf Applications}
        child[grow=45] { node[concept,yshift=0cm,minimum size=2cm]
        {Digital sorting \& searching} }
        child[grow=0] { node[concept,yshift=0cm,minimum size=2cm]
        {Hashing} }
        child[grow=-45] { node[concept,yshift=0cm,minimum size=2cm]
        {Collision resolution protocols} }
        child[grow=-90] { node[concept,xshift=0cm,minimum size=2cm]
        {Group testing} }
        child[grow=-135] { node[concept,minimum size=2cm]
        {Multi-\\processor systems} }
    }
    child[concept color=red!15,grow=-135,xshift=0cm] {
        node[concept,minimum size=2.5cm] {\bf Theory}
        child[grow=-45] { node[concept,yshift=0cm,minimum size=2cm]
        {Random walks} }
        child[grow=-90] { node[concept,yshift=0cm,minimum size=2cm]
        {Buffon machines} }
        child[grow=-135] { node[concept,yshift=0cm,minimum size=2cm]
        {Dynamical systems} }
        child[grow=180] { node[concept,xshift=0cm,minimum size=2cm]
        {Urn models} }
        child[grow=135] { node[concept,minimum size=2cm]
        {Random graphs} }
        child[grow=0] { node[concept,minimum size=2cm]
        {Random words} }
    }
    child[concept color=blue!20,grow=135,xshift=0cm] {
        node[concept,minimum size=2.5cm] {\bf Algorithms}
        child[grow=225] { node[concept,yshift=-.5cm,minimum size=2cm,]
        {Bit complexity} }
        child[grow=180] { node[concept,yshift=-.5cm,minimum size=2cm]
        {Bucket sort} }
        child[grow=45]
        {node[concept,xshift=.7cm,yshift=-.5cm,minimum size=2.75cm]
        {Probabilistic counting schemes} }
        child[grow=90]
        {node[concept,xshift=0cm,yshift=.3cm,minimum size=2.75cm]
        {Polynomial factorization} }
        child[grow=135] { node[concept,xshift=-.5cm,minimum size=2cm]
        {Random variable generation} }
    } ;
\end{tikzpicture}\vspace*{-.3cm}
\end{center}
\caption{\emph{The diverse themes and methodology developed (or
mentioned) in Flajolet's works that are connected to BSPs.}}
\label{fig-pf}
\end{figure}
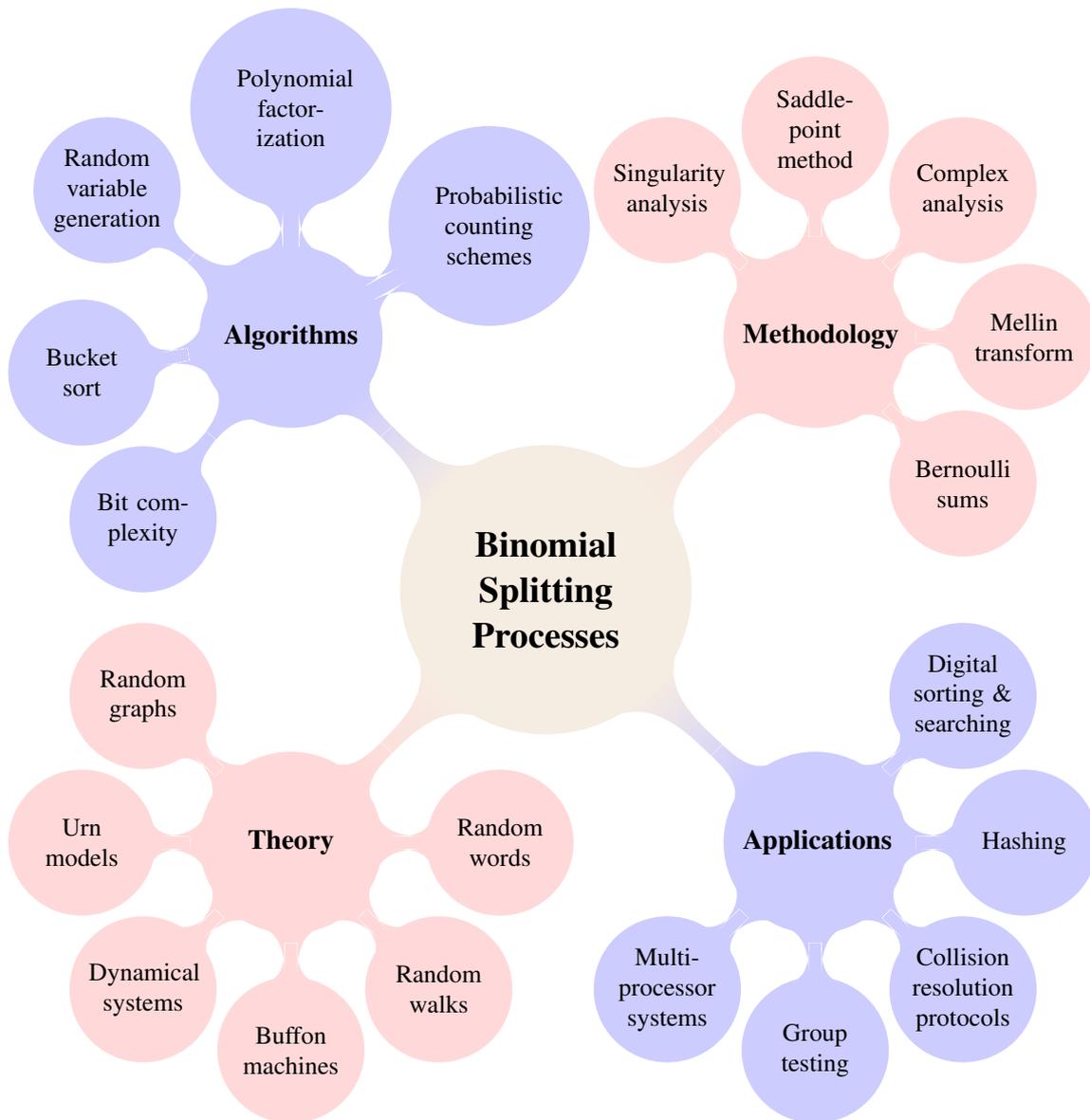

Analysis of algorithms (and particularly BSPs) in the pre-Flajolet
era relied mostly on more elementary approaches (including Tauberian
theorems; see \cite{fredman74a}), with some sporadic exceptions in
the use of the ``Gamma-function method'' (a particular case of
Mellin transform): the height of random trees \cite{de-bruijn72a},
the analysis of radix-exchange sort (essentially the external path
length of random tries) given in \cite[\S 5.2.2]{knuth98a}, PATRICIA
tries and digital search trees \cite[\S 6.3]{knuth98a}, odd-even
merging \cite{sedgewick78a}, register function of random trees
\cite{kemp78a}, analysis of carry propagation \cite{knuth78a}, and
extendible hashing \cite{fagin79a}. See Dumas's chapter introduction
(Chapter 4, Volume III) for a more detailed account.

Many asymptotic patterns such as \eqref{var-Xn}, which most of us take
for granted today, were far from being clear in the 1980's, notably in
engineering contexts. For example, the minute periodic fluctuations
when $\log p/\log q$ is rational are often invisible in numerical
calculations, leading possibly to wrong conclusions. Flajolet pioneered
and developed systematic analytic tools to fully characterize such
tiny perturbations, which he called ``wobbles.'' See \cite{PF120} for
more information.

Amazingly, most of the items in the big picture of
Figure~\ref{fig-pf} were already discovered or clarified in the
1980's in Flajolet's published works with a few later themes aiming
at finer improvements in results or more general stochastic models.
Among these, the ``digital process" and ``probabilistic counting
algorithms'' became two of his favorite subjects of presentation, as
can be seen from his webpage of lectures where about one third of
talks are related to these two subjects.

This paper is organized as follows. We briefly introduce tries,
functional equations and the analytic tools in the next section.
We then develop more analytic tools we need in
Section~\ref{sec-hada}, the most difficult part being the proof
of admissibility under Hadamard product. Then we focus on the
characterization of the asymptotic variance of general trie
statistics in the following sections. We also include PATRICIA tries
in Section~\ref{sec-patricia} and conclude this paper with a few
remarks.

\section{Random tries, functional equations and
asymptotic analysis}

\label{sec-prelim}

The design of an ordinary dictionary according to the alphabetical
(or lexicographical) order induces itself a tree structure, which is
also the splitting procedure used in many digital tree structures
and bucketing algorithms such as tries and radix sort. Tries (coined
by Fredkin \cite{fredkin60a}, which is a mixture of ``tree" and
``retrieval") were first introduced in computer algorithms by de la
Briandais \cite{de-la-briandais59a} in 1959, the same year when the
radix-exchange sort (a digital realization of Quicksort) was
proposed by Hildebrandt and Isbitz \cite{hildebrandt59a}; see
\cite[\S 6.3]{knuth98a} for more information. Tries are one of
the most widely adopted prototype data structures for words and
strings, and admit a large number of extensions and variants.

Given a set of $n$ random binary strings (each being a sequence of
Bernoulli random variables with parameter $p$), we can recursively
define the random trie associated with this set as follows. If
$n=0$, then the trie is empty; if $n=1$, then the trie is composed
of a single (external) node holding the input-string; if $n>1$, then
the trie contains three parts: a root (internal) node used to direct
keys to the left (when the first bit of the string is $0$) or to
the right (when the first bit of the string is $1$), a left sub-trie
of the root for keys whose first bits are $0$ and a right sub-trie
for keys whose first bits are $1$; strings directing to each of the
two subtrees are constructed recursively as tries (but using
subsequent bits successively). Thus tries are ordered, prefix trees.
See Figure~\ref{fg-trie} for a trie of $7$ keys.

\begin{figure}
\begin{center}
\begin{tikzpicture}[line width=1.5pt]
\tikzstyle{every node}=[draw ,circle, level distance=0.2cm]
\tikzstyle{level 1}=[sibling distance=8cm]
\tikzstyle{level 2}=[sibling distance=3cm]
\node {}
    child{ node{}
        child{node[rectangle]{$00$}
        edge from parent node[draw=none, left] {$0$}}
        child{node{}
            child{node{}
                child{node[rectangle]{$0100$}
                edge from parent node[draw=none, left] {$0$}}
                child{node{}
                    child{node[rectangle]{$01010$}
                    edge from parent node[draw=none, left] {$0$}}
                    child{node[rectangle]{$01011$}
                    edge from parent node[draw=none, right] {$1$}}
                edge from parent node[draw=none, right] {$1$}}
            edge from parent node[draw=none, left] {$0$}}
            child{node[rectangle]{$011$}
            edge from parent node[draw=none, right] {$1$}}
        edge from parent node[draw=none, right] {$1$}}
    edge from parent node[draw=none, left] {$0$}}
    child{ node{}
        child{node{}
            child{
            edge from parent[draw=none]}
            child{node{}
                child{node[rectangle]{$1010$}
                edge from parent node[draw=none, left] {$0$}}
                child{node[rectangle]{$1011$}
                edge from parent node[draw=none, right] {$1$}}
            edge from parent node[draw=none, right] {$1$}}
        edge from parent node[draw=none, left] {$0$}}
        child{
        edge from parent[draw=none]}
    edge from parent node[draw=none, right] {$1$}};
\end{tikzpicture}
\caption{\emph{A trie of $n=7$ records: the circles
represent internal nodes and rectangles holding the records are
external nodes.}} \label{fg-trie}
\end{center}
\end{figure}

Asymptotic analysis of the trie recurrence \eqref{an-rr} is nowadays
not difficult and a typical way of deriving asymptotic estimates
starts with the Poisson generating function \eqref{pgf}, which satisfies
the functional equation \eqref{an-pgf} (when $a_n$ does not grow faster
than, say exponential), where $\tilde{g}(z)$ there depends on $b_n$ and
the initial conditions. From this, one sees that the Mellin transform of
$\tilde{f}(z)$
\[
    \mathscr{M}[\tilde{f};s]
    :=\int_0^\infty \tilde{f}(z)z^{s-1}\dd z
\]
satisfies \emph{formally}
\[
    \mathscr{M}[\tilde{f};s]
    = \frac{\mathscr{M}[\tilde{g};s]}{1-p^{-s}-q^{-s}}.
\]

Then the asymptotics of $a_n$ can be manipulated by a two-stage
analytic approach: first derive asymptotics of $\tilde{f}(z)$ for
large $|z|$ by the inverse Mellin integral
\begin{align}\label{fz-inv-mellin}
    \tilde{f}(z) = \frac1{2\pi i}\int_\uparrow
    \frac{\mathscr{M}[\tilde{g};s]\,z^{-s}}{1-p^{-s}-q^{-s}}\,\dd s,
\end{align}
where the integration path is some vertical line, and then apply the
saddle-point method to Cauchy's integral formula (called
\emph{analytic de-Poissonization} \cite{jacquet98a})
\begin{align}\label{an-cauchy}
    a_n = \frac{n!}{2\pi i}\oint_{|z|=r} z^{-n-1}
    e^{z}\tilde{f}(z) \dd{z}\qquad(r>0).
\end{align}
This two-stage Mellin-saddle approach has been largely developed by
Jacquet and Szpankowski (see \cite{jacquet98a}) and can in many real
applications be encapsulated into one, called the
Poisson-Mellin-Newton cycle in Flajolet's papers (see
\cite{PF052,PF124})
\begin{align}\label{an-pmn}
    a_n = \frac{n!}{2\pi i}\int_{\uparrow}
    \frac{\mathscr{M}[\tilde{g};s]}
    {(1-p^{-s}-q^{-s})\Gamma(n+1-s)}\,\dd s,
\end{align}
which is \emph{formally} obtained by substituting
\eqref{fz-inv-mellin} into \eqref{an-cauchy} and by interchanging
the order of integration.

Note that such a formal representation may be meaningless due to the
divergence of the integral. One of the most useful tools in
justifying the \emph{exponential smallness} of $\mathscr{M}
[\tilde{g};s]$ at $c\pm \infty$ is Proposition 5 of Flajolet et
al.'s survey paper \cite{PF120} on Mellin transforms. For ease of
reference, we call it the \emph{Exponential Smallness Lemma} in this
paper.
\begin{quote}
    \textbf{Exponential Smallness Lemma.} \cite[Prop.\ 5]{PF120}
    \emph{If, inside the sector $|\arg(z)|\le \theta$ ($\theta>0$),
    $f(z) = O(|z|^{-\alpha})$, as $z\to0$, and $f(z) = O(|z|^{-\beta})$
    as $|z|\to\infty$, then $\mathscr{M}[f;s]= O(e^{-\theta |\Im(s)|})$
    holds uniformly for $\Re(s)\in \langle\alpha,\beta\rangle$.}
\end{quote}
This simple Lemma is crucial in the development of our approach.

In various practical cases, the use of the Poisson-Mellin-Newton approach
relies mostly on the so-called Rice's integral formula (or integral
representation for finite differences) when the integral converges; see
Figure~\ref{fig-pmr} for a diagrammatic illustration.

\tikzstyle{block} = [rectangle, draw, fill=none, line width = 0.8pt,
text width=5em, rounded corners, minimum height=4em]
\tikzstyle{line} = [draw, -latex',line width = 1pt]
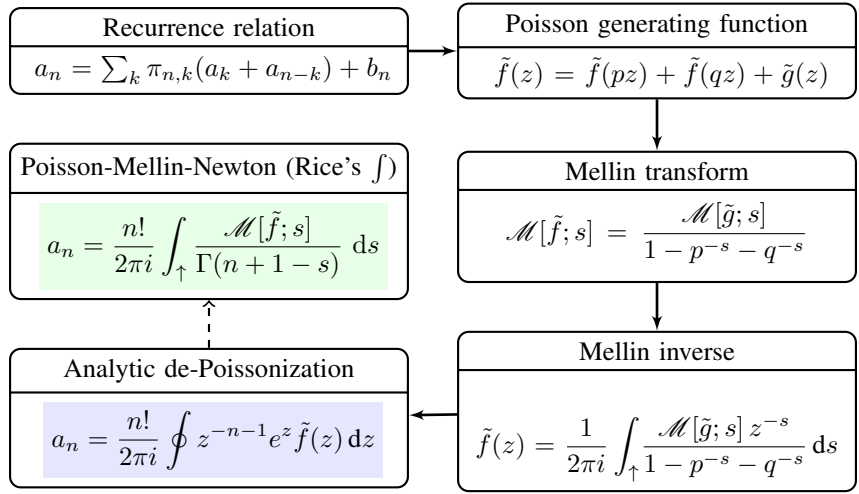
\begin{figure}[!h]
\begin{center}{\footnotesize
\begin{tikzpicture}[auto]
\node [block, text width=5cm, rectangle split, rectangle split
parts=2, text centered] (b1) { Recurrence relation

\nodepart{second} { $ a_n = \sum\nolimits_k \pi_{n,k}
(a_k+a_{n-k})+b_n$ } };

\node [block, text width=5cm, right=4ex of b1, node distance=2.5cm,
rectangle split, rectangle split parts=2, text centered] (b2)
{
Poisson generating function
\nodepart{second}
{
{$\tilde{f}(z)=\tilde{f}(pz)+\tilde{f}(qz)+\tilde{g}(z)$}
}
};

\node [block, text width=5cm, below=4ex of b1, node distance=2.5cm,
rectangle split, rectangle split parts=2, text centered] (b3)
{
Poisson-Mellin-Newton (Rice's $\int$)
\nodepart{second}
{
\colorbox{green!10}{$\displaystyle
a_n = \frac{n!}{2\pi i}\int_{\uparrow}
\frac{\mathscr{M}[\tilde{f};s]}
{\Gamma(n+1-s)}\,\dd s$}
}
};

\node [block, text width=5cm, below=4.3ex of b2, node distance =3cm,
rectangle split, rectangle split parts=2, text centered] (b4)
{
Mellin transform
\nodepart{second}
{ \quad \\
$\displaystyle \mathscr{M}[\tilde{f};s]
=\frac{\mathscr{M}[\tilde{g};s]}{1-p^{-s}-q^{-s}}$ \\ \quad
}
};

\node [block, text width=5cm, below =4ex of b3, node distance = 2.5cm,
rectangle split, rectangle split parts=2, text centered] (b5)
{
Analytic de-Poissonization
\nodepart{second}
{
\colorbox{blue!10}{$\displaystyle
    a_{n} =\frac{n!}{2\pi i}\oint z^{-n-1}e^{z}
    \tilde{f}(z)\,\text{d} z$}
}
};

\node [block, text width=5cm, below =4 ex of b4, node distance = 3cm,
rectangle split, rectangle split parts=2, text centered] (b6)
{
Mellin inverse
\nodepart{second}
{\quad \\ \quad \\
$\displaystyle\tilde{f}(z)=\frac{1}{2\pi i}
    \int_{\uparrow}\!
    \frac{\mathscr{M}[\tilde{g};s]\,z^{-s}}
    {1-p^{-s}-q^{-s}}\,\text{d} s$
}
};
% Draw edges
\draw [line] (b1) -- (b2);
\draw [line] (b2) -- (b4);
\draw [line] (b4) -- (b6);
\draw [line] (b6) -- (b5);
\draw [thick,dashed,->] (b5) -- (b3);
\end{tikzpicture}}
\end{center}\label{fig-diag}\vspace*{-.4cm}
\caption{\emph{The two analytic approaches to the asymptotics of
$a_n$. Here $\pi_{n,k} := \binom{n}{k}p^k q^{n-k}$.}}
\label{fig-pmr}
\end{figure}

Asymptotics of either of the two integrals \eqref{fz-inv-mellin} and
\eqref{an-pmn} rely heavily on the singularities of the integrand,
which in turn depends on the location of the zeros of the equation
$1-p^{-s}-q^{-s}=0$. A detailed study of the zeros can be found in
\cite{PF055}, and later in \cite{drmota11a,schachinger00a}. While
the dominant asymptotic terms are often easy to characterize when
analytic properties of $\mathscr{M}[\tilde{g};s]$ are known (owing
largely to the systematic tools Flajolet and his coauthors
developed), error analysis turned out to be highly challenging when
$\log p/\log q$ is irrational; see \cite{PF208}.

These analytic tools are well-suited for computing the asymptotics
of the mean, but soon become very messy when adopted for higher moments,
which satisfy the same type of recurrences but with convolution terms
that are often difficult to manipulate analytically. The situation
becomes even worse when dealing with the variance or higher central
moments because the high concentration of binomial distribution results
in smaller variance, meaning more complicated cancelations in the
desired asymptotic approximations have to be properly taken into account.

The key, crucial step of our approach to the asymptotic variance of
trie statistics is to introduce, as in \cite{hwang10a}, the corrected
Poissonized variance of the form
\begin{align}\label{Poi-Var}
    \tilde{V}(z) := \tilde{f}_2(z) - \tilde{f}_1(z)^2
    - z\tilde{f}_1'(z)^2,
\end{align}
where $\tilde{f}_1$ and $\tilde{f}_2$ denote the Poisson generating
functions of the first and the second moments, respectively.
Such a consideration results in simpler Fourier series expansions
for the periodic functions that appear in the asymptotic
approximations due to the no-cancelation character, especially in
the symmetric case. We will enhance this approach by introducing the
class of \emph{JS-admissible functions} as in our previous paper
\cite{hwang10a},
a notion formulated from Jacquet and Szpankowski's works on analytic
de-Poissonization (see \cite{jacquet98a}) and mostly inspired from
Hayman's classical work \cite{hayman56a} on saddle-point method
(see also \cite[\S VIII.5]{PF201}), via which many
asymptotic approximations can be derived by checking only simple
criteria of admissibility. The combined use leads to a very
effective, systematic approach that can be easily adapted for
diverse contexts where a similar type of analytic problems is
encountered; see Sections~\ref{sec-apps} and \ref{sec-more} for some
examples.

In general, polynomial growth rate for $\tilde{g}(z)$ for large
$|z|$ implies the same for $\tilde{f}$ in a small sector containing
the real axis. The only exception is the functional-differential
equation \cite{banderier13a}
\[
    \tilde{f}'(z) = \tilde{f}(qz) + \tilde{g}(z),
\]
for which the growth is of order $z^{c\log z}$ when
$\tilde{g}$ grows polynomially for large $|z|$. Note that this
equation is a special case of the so-called ``pantograph equations";
see \cite{banderier13a} for more information.

\medskip

\noindent \textbf{Notations.} Throughout this paper, $q=1-p$ and
$0<p<1$. Also $h:=-p\log p-q\log q$ denotes the entropy of the
Bernoulli distribution. The splitting distribution $I_n$ is a
binomial distribution with mean $pn$. For brevity, we introduce the
generic symbol $\mathscr{F}[G](x)$ to denote a bounded periodic
function of period $1$ of the form
\begin{align} \label{FGx}
    \mathscr{F}[G](x) = \left\{\begin{array}{ll}
    {\displaystyle h^{-1}
    \sum_{k\in\mathbb{Z}\setminus\{0\}}
    G(-1+\chi_k)e^{2k\pi i x}},
    &\text{if}\ \frac{\log p}{\log q}\in\mathbb{Q}\\
    0, &\text{if}\ \frac{\log p}{\log q}\not\in\mathbb{Q}
    \end{array}\right\},
\end{align}
where $\chi_k = \frac{2rk\pi i}{\log p}$ when $\frac{\log p} {\log
q} = \frac{r}{\ell}$ with $(r,\ell)=1$. The average value of
$\mathscr{F}[G]$ is zero and the Fourier series is always absolutely
convergent (it is indeed infinitely differentiable for all cases we
study).

\section{JS-admissibility, Hadamard product and asymptotic
transfer}
\label{sec-hada}

We collect and develop in this section some technical preliminaries,
which are needed later for our asymptotic analysis.

\subsection{JS-admissible functions}
We begin with recalling the definition and a few fundamental
properties from \cite{hwang10a} of JS-admissibility (a framework
combining ideas from \cite{hayman56a,jacquet98a}).
\begin{Def} \label{def-js}
Let $\tilde{f}(z)$ be an entire function. Then we say that
$\tilde{f}(z)$ is JS-admissible and write $\tilde{f}\in\JS$ (or more
precisely, $\tilde{f}\in\JS_{\!\!\!\alpha,\beta},
\alpha,\beta\in\mathbb{R})$ if for some $0<\theta<\pi/2$ and
$\vert z\vert\ge 1$ the following two conditions hold.
\begin{itemize}
\item[{\bf (I)}] (Polynomial growth inside a sector) Uniformly for
$\vert\arg(z)\vert\le\theta$,
\[
    \tilde{f}(z)=O\left(\vert z\vert^{\alpha}(\log_{+}\vert
    z\vert)^{\beta}\right),
\]
where $\log_{+}x:=\log(1+x)$.

\item[{\bf (O)}] (Exponential bound) Uniformly for
$\theta\le\vert\arg(z)\vert \le\pi$,
\[
    f(z):=e^{z}\tilde{f}(z)=O\left(e^{(1-\ve)\vert
    z\vert}\right),
\]
for some $\ve>0$.
\end{itemize}
\end{Def}

The major reason of introducing JS-admissible functions is to
provide a systematic analytic justification of the \emph{Poisson
heuristic} $a_n \sim \tilde{f}(n)$, where $\tilde{f}$ is the Poisson
generating function of $a_n$. We do not however pursue optimum
conditions here for simplify and easy applications. On the other
hand, since the conditions of admissibility we impose are strong, we
can indeed provide a very precise asymptotic characterization of
$a_n$.

\begin{prop}[\cite{hwang10a}] \label{prop-PC}
If $\tilde{f}\in \JS_{\!\!\!\alpha,\beta}$, then $a_n $ satisfies
the asymptotic expansion
\begin{align} \label{PC-exp}
    a_n = \sum_{0\le j< 2k}
    \frac{\tilde{f}^{(j)}(n)}{j!}\,\tau_j(n)
    + O\left(n^{\alpha-k}\log^\beta n \right),
\end{align}
for $k=0,1,\dots$, where the $\tau_j$'s are polynomials of $n$ of
degree $\tr{j/2}$ given by
\[
    \tau_j(n) = \sum_{0\le l\le j}
    \binom{j}{l}(-n)^l \frac{n!}{(n-j+l)!}
    \qquad(j=0,1,\dots).
\]
\end{prop}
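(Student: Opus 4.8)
The plan is to establish Proposition~\ref{prop-PC} by a direct saddle-point analysis of Cauchy's integral \eqref{an-cauchy} taken with radius $r=n$, the saddle point of $z\mapsto z^{-n}e^z$; this is exactly the form of analytic de-Poissonization that \emph{JS-admissibility} is tailored for. Write $f(z)=e^z\tilde f(z)=\sum_n a_nz^n/n!$, an entire function. The polynomials $\tau_j$ arise from the identity $\tau_j(n)=n![z^n]\,e^z(z-n)^j$: expanding $e^z(z-n)^j=e^z\sum_i\binom ji(-n)^{j-i}z^i$ and using $n![z^n]e^zz^i=n!/(n-i)!$ gives precisely the formula in the statement. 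Summing over $j$,
\[
    \sum_j\tau_j(n)\frac{t^j}{j!}=n![z^n]e^{(1+t)z}e^{-nt}=\bigl(e^{-t}(1+t)\bigr)^n=\exp\!\bigl(-\tfrac n2t^2+O(nt^3)\bigr),
\]
so $\tau_j$ is a polynomial in $n$ of degree $\tr{j/2}$ (each power of $n$ carries at least a factor $t^2$); in particular $\tau_0=1$, $\tau_1=0$, $\tau_2(n)=-n$, recovering the classical $a_n\approx\tilde f(n)-\tfrac n2\tilde f''(n)$.

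Since $\tilde f$ is entire, Taylor's formula with integral remainder gives, for $k\ge1$,
\[
    \tilde f(z)=\sum_{0\le j<2k}\frac{\tilde f^{(j)}(n)}{j!}(z-n)^j+R_{2k}(z),\qquad R_{2k}(z)=\frac{(z-n)^{2k}}{(2k-1)!}\int_0^1(1-u)^{2k-1}\tilde f^{(2k)}\!\bigl(n+u(z-n)\bigr)\dd u,
\]
and applying the linear functional $n![z^n]e^z(\cdot)$ termwise reduces the proposition to the single estimate
\[
    a_n-\sum_{0\le j<2k}\frac{\tilde f^{(j)}(n)}{j!}\,\tau_j(n)=\frac{n!}{2\pi i}\oint_{|z|=n}z^{-n-1}e^zR_{2k}(z)\dd z=O\bigl(n^{\alpha-k}\log^\beta n\bigr)
\]
(the case $k=0$ is the Poisson heuristic $a_n=O(n^\alpha\log^\beta n)$, handled by the same argument with $R_0=\tilde f$). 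First one upgrades condition (I) to bounds on all derivatives: by Cauchy's estimates on a disc of radius $c|w|$ about $w$, with $c$ small enough that the disc remains in the sector $|\arg z|\le\theta$, one gets $\tilde f^{(m)}(w)=O\bigl(|w|^{\alpha-m}(\log_+|w|)^\beta\bigr)$ uniformly for $|\arg w|\le\theta/2$ and $|w|$ large; in particular $\tilde f^{(j)}(n)/j!=O(n^{\alpha-j}\log^\beta n)$, so $\tilde f^{(j)}(n)\tau_j(n)/j!=O(n^{\alpha-\lceil j/2\rceil}\log^\beta n)$ and the last retained summand $j=2k-1$ is already of the claimed order.

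It remains to bound the remainder integral. Writing $z=ne^{i\phi}$ one has $|z^{-n-1}e^z|=n^{-n-1}e^{n\cos\phi}$ on $|z|=n$ and $n!\,n^{-n}e^n=O(\sqrt n)$ by Stirling; I split into the central arc $|\phi|\le\phi_0:=n^{-1/2}\log n$, the intermediate arc $\phi_0<|\phi|\le\theta$, and the outer arc $\theta<|\phi|\le\pi$. On the central arc the segment from $n$ to $z$ stays in $|\arg w|\le\phi_0$ with $|w|\asymp n$, so $|R_{2k}(z)|\le C|z-n|^{2k}n^{\alpha-2k}\log^\beta n\le C(n|\phi|)^{2k}n^{\alpha-2k}\log^\beta n$; combining with $1-\cos\phi\ge c\phi^2$ and $\int_{\mathbb R}e^{-cn\phi^2}|\phi|^{2k}\dd\phi=O(n^{-k-1/2})$ yields the central contribution $O\bigl(\sqrt n\cdot n^{2k}\cdot n^{\alpha-2k}\log^\beta n\cdot n^{-k-1/2}\bigr)=O(n^{\alpha-k}\log^\beta n)$, the dominant piece. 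On the intermediate arc $|R_{2k}(z)|=O(n^\alpha\log^\beta n)$ (crudely, from $|z-n|\le2n$ and condition (I)) while $e^{n\cos\phi}\le e^{n\cos\phi_0}=e^n n^{-(\log n)/2+o(1)}$, so after the $\sqrt n$ from Stirling this contribution is smaller than any power of $n$. On the outer arc, split $R_{2k}=\tilde f-(\text{polynomial})$: the polynomial part is $O(n^\alpha\log^\beta n)$ against a weight $\le n^{-n-1}e^{n\cos\theta}$, hence exponentially small, and for the $\tilde f$ part condition (O) gives $|z^{-n-1}e^z\tilde f(z)|=n^{-n-1}|f(z)|\le n^{-n-1}e^{(1-\ve)n}$, so that contribution is $O(\sqrt n\,e^{-\ve n})$. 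Adding the three pieces proves the estimate, and hence the proposition.

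The only real obstacle is the central-arc estimate: turning the formal saddle-point approximation into a bound with the \emph{correct} power of $n$. This forces one to (i) upgrade condition (I) to uniform bounds on all derivatives of $\tilde f$ via Cauchy's estimates, (ii) keep the straight segments joining $n$ to the nearby points of $|z|=n$ inside the sector so the integral-remainder bound for $R_{2k}$ is available, and (iii) match the gain $n^{-k-1/2}$ from the Gaussian moment against the loss $n^{2k}$ from $|z-n|^{2k}$ together with the $\sqrt n$ from Stirling. The closed form of $\tau_j$ and the negligibility of the two non-central arcs are routine.
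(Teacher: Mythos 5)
Your argument is correct, and it is essentially the canonical proof of this result: the paper itself does not reprove Proposition~\ref{prop-PC} (it is quoted from \cite{hwang10a}), and the proof there is exactly this kind of saddle-point/de-Poissonization analysis of Cauchy's integral at $r=n$, with $\tau_j(n)=n![z^n]e^z(z-n)^j$, Cauchy-type bounds upgrading condition \textbf{(I)} to all derivatives, a central arc of width about $n^{-1/2}\log n$ giving the $O(n^{\alpha-k}\log^\beta n)$ term, and conditions \textbf{(I)}/\textbf{(O)} killing the remaining arcs. No substantive discrepancy to report.
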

Note that the Poisson-Charlier expansion
\[
    a_n = \sum_{j\ge0} \frac{\tilde{f}^{(j)}(n)}{j!}\,
    \tau_j(n)
\]
converges as long as $\tilde{f}$ is an entire function; see
\cite{hwang10a}.
It is the asymptotic nature \eqref{PC-exp} that requires more
regularity conditions, which can intuitively be seen by observing
that $\tilde{f}^{j}(n) \asymp n^{-j}\tilde{f}(n)$ when $\tilde{f}
\in \JS$; see \cite{hwang10a}.

The $\tau_j$'s are closely connected to Charlier and Laguerre
polynomials; see \cite{hwang10a} for a more detailed discussion.
The first few terms are given as follows.
\begin{center}
\begin{tabular}{|c|c|c|c|c|c|c|}\hline
$\tau_0(n)$ & $\tau_1(n)$ & $\tau_2(n)$ & $\tau_3(n)$ & $\tau_4(n)$
&$\tau_5(n)$ & $\tau_6(n)$ \\ \hline $1$ &  $0$ & $-n$  & $2n$ &
$3n(n-2)$ & $-4n(5n-6)$ & $-5n(3n^2-26n+24)$ \\ \hline
\end{tabular}
\end{center}
The fact that $\tau_1=0$ indicates that much information is
condensed in the dominant term $\tilde{f}(n)$.

At the generating function level, the usefulness of JS-admissible
functions lies in the closure properties under several elementary
operations.
\begin{prop}[\cite{hwang10a}]\label{prop-closure}
Let $m$ be a non-negative integer and $\alpha\in(0,1)$.
\begin{itemize}

\item[(i)] $z^m, e^{-\alpha z}\in\JS$.

\item[(ii)] If $\tilde{f}\in\JS$, then $\tilde{P}\tilde{f}\in\JS$
for any polynomial $\tilde{P}(z)$.

\item[(iii)] If $\tilde{f}\in\JS$, then $\tilde{f}(\alpha z)\in\JS$.

\item[(iv)] If $\tilde{f},\tilde{g}\in\JS$, then $\tilde{f}
+\tilde{g}\in\JS$.

\item[(v)] If $\tilde{f},\tilde{g}\in\JS$, then $\tilde{f}(\alpha
z)\tilde{g}((1-\alpha)z)\in\JS$.

\item[(vi)] If $\tilde{f}\in\JS$, then $\tilde{f}^{(m)}\in\JS$.
\end{itemize}
\end{prop}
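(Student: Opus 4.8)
The plan is to check conditions \textbf{(I)} and \textbf{(O)} of Definition~\ref{def-js} one operation at a time, reducing each to elementary estimates on $|z|^{\alpha}(\log_{+}|z|)^{\beta}$ and on $e^{\Re z}=e^{|z|\cos(\arg z)}$. Two preliminary observations carry most of the bookkeeping. First, only large $|z|$ matters: on any region $\{1\le|z|\le R,\ |\arg z|\le\pi\}$ each function under consideration is continuous, hence bounded, while the target bounds $|z|^{\alpha}(\log_{+}|z|)^{\beta}$ and $e^{(1-\ve)|z|}$ are bounded below there; so it suffices to treat $|z|\ge R$ with $R$ as large as convenient. Second --- the \emph{shrinking remark} --- if $\tilde f$ satisfies \textbf{(I)} on $|\arg z|\le\theta$, then for every $\theta'\in(0,\theta)$ one has $e^{z}\tilde f(z)=O(e^{(1-\ve)|z|})$ uniformly on $\theta'\le|\arg z|\le\theta$, since there $\Re z\le|z|\cos\theta'$ with $\cos\theta'<1$ and the extra polynomial factor is swallowed by shrinking $\ve$. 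Hence one may always decrease the admissibility angle of an element of $\JS$, and for two elements $\tilde f,\tilde g$ pass to a common angle $\theta=\min(\theta_f,\theta_g)$ on which \emph{both} satisfy \textbf{(I)} (on $|\arg z|\le\theta$) and \textbf{(O)} (on $\theta\le|\arg z|\le\pi$).

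With these in hand, (i)--(v) become routine. For $z^{m}$ and $e^{-\alpha z}$ condition \textbf{(I)} is immediate, and \textbf{(O)} follows because $\Re z\le|z|\cos\theta<|z|$ on $\theta\le|\arg z|\le\pi$, so $e^{z}z^{m}=O(|z|^{m}e^{|z|\cos\theta})$ and $e^{z}e^{-\alpha z}=e^{(1-\alpha)z}=O(e^{(1-\alpha)|z|\cos\theta})$ are both $O(e^{(1-\ve)|z|})$. Part (ii): a polynomial factor only raises the exponent $\alpha$ in \textbf{(I)}, and in \textbf{(O)} it is absorbed by an arbitrarily small decrease of $\ve$. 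Part (iii): $z\mapsto\alpha z$ preserves the argument and multiplies the modulus by $\alpha<1$, so \textbf{(I)} transfers directly; writing $e^{z}\tilde f(\alpha z)=e^{(1-\alpha)z}\cdot e^{\alpha z}\tilde f(\alpha z)$, on the outer cone the first factor has rate $\le(1-\alpha)\cos\theta$ and the second rate $\le\alpha(1-\ve)$, whose sum is $<1$. Part (iv) is additivity of the two bounds after passing to a common angle. For (v), using $\alpha+(1-\alpha)=1$ we factor
\[
    e^{z}\tilde f(\alpha z)\,\tilde g\bigl((1-\alpha)z\bigr)
    =\bigl[e^{\alpha z}\tilde f(\alpha z)\bigr]\cdot
     \bigl[e^{(1-\alpha)z}\tilde g\bigl((1-\alpha)z\bigr)\bigr];
\]
both scalings preserve the argument, so on the inner cone \textbf{(I)} multiplies (the $\alpha$-exponents add, likewise the powers of $\log_{+}$), and on the outer cone the two exponential rates $r_{f},r_{g}<1$ combine into $\alpha r_{f}+(1-\alpha)r_{g}<1$, giving \textbf{(O)}.

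Part (vi) is where genuine care is needed. I would use Cauchy's formula for derivatives on a disc of radius $\delta|z|$ with $\delta>0$ small, $\tilde f^{(m)}(z)=\frac{m!}{2\pi i}\oint_{|w-z|=\delta|z|}\tilde f(w)(w-z)^{-m-1}\dd w$. For \textbf{(I)}, fix $\theta_{0}<\theta$; for $|\arg z|\le\theta_{0}$ and $\delta$ small the circle stays inside $\{|\arg w|\le\theta,\ |w|\ge1\}$, where $|\tilde f(w)|=O(|z|^{\alpha}(\log_{+}|z|)^{\beta})$ since $|w|\asymp|z|$, and $(w-z)^{-m-1}$ yields the gain $|z|^{-m}$, so $\tilde f^{(m)}\in\JS_{\!\!\!\alpha-m,\beta}$ on that side. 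For \textbf{(O)} rewrite
\[
    e^{z}\tilde f^{(m)}(z)=\frac{m!}{2\pi i}
    \oint_{|w-z|=\delta|z|}
    \frac{e^{z-w}\,f(w)}{(w-z)^{m+1}}\dd w,\qquad f:=e^{z}\tilde f;
\]
on the circle $|e^{z-w}|\le e^{|z-w|}=e^{\delta|z|}$, and $|f(w)|=O(e^{\kappa|w|})$ for some $\kappa<1$ --- from the outer estimate where $|\arg w|\ge\theta$, and from $\Re w\le|w|\cos(\arg w)$ with $|\arg w|$ bounded below by roughly $|\arg z|-\arcsin\delta>0$ where the circle crosses into the inner cone. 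Hence $e^{z}\tilde f^{(m)}(z)=O\bigl(|z|^{-m}e^{(\kappa(1+\delta)+\delta)|z|}\bigr)$, and $\delta$ small enough to make $\kappa(1+\delta)+\delta<1$ gives \textbf{(O)} with the same angle $\theta_{0}$. The one real obstacle, entirely inside (vi), is to choose $\delta$ so that the Cauchy contour never leaves the region where $\tilde f$ (resp.\ $e^{z}\tilde f$) is controlled, and then to verify that, once the radius, the factor $e^{z-w}$, and the contour's excursion across $|\arg w|=\theta$ are all accounted for, the exponential rate is still strictly below $1$; the rest, thanks to the shrinking remark and the compactness observation, is careful tracking of constants.
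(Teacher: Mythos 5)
Your argument is correct, and it follows essentially the route one would expect: the paper itself gives no proof here but defers to \cite{hwang10a}, where the closure properties are verified in the same way — routine checking of conditions \textbf{(I)} and \textbf{(O)} (after passing to a smaller common angle, exactly your shrinking remark) for (i)--(v), and Cauchy's integral formula on a circle of radius proportional to $|z|$ for the derivative case (vi). Your handling of the delicate points in (vi) — keeping the contour inside the controlled region, bounding $e^{z-w}$ by $e^{\delta|z|}$, and noting that the crossing into the inner cone still keeps $|\arg w|$ bounded away from $0$ so the total exponential rate stays below $1$ for small $\delta$ — is sound.
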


We will enhance these closure properties by proving that
JS-admissibility is also closed under Hadamard product.

\subsection{Asymptotic transfer}

For our purposes, we need also a transfer theorem for entire
functions satisfying the functional equation \eqref{an-pgf}.

\begin{prop}\label{prop-at}
Let $\tilde{f}(z)$ and $\tilde{g}(z)$ be entire functions satisfying
\[
    \tilde{f}(z)=\tilde{f}(pz)+\tilde{f}(qz)+\tilde{g}(z),
\]
with $f(0)$ given. Then
\[
    \tilde{f}\in\JS\quad\text{if and only
    if}\quad\tilde{g}\in\JS.
\]
\end{prop}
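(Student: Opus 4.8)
The plan is to prove the two directions separately, using the closure properties of Proposition~\ref{prop-closure} together with the analytic representation \eqref{fz-inv-mellin}–\eqref{an-pmn} and the Exponential Smallness Lemma. The easy direction is ``$\tilde{f}\in\JS\Rightarrow\tilde{g}\in\JS$'': since $\tilde{g}(z)=\tilde{f}(z)-\tilde{f}(pz)-\tilde{f}(qz)$ and each of the three terms on the right is JS-admissible by parts (iii) and (iv) of Proposition~\ref{prop-closure}, we get $\tilde{g}\in\JS$ immediately. So the substance of the proposition is the reverse implication, and that is where I would spend the effort.

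For ``$\tilde{g}\in\JS\Rightarrow\tilde{f}\in\JS$'', first I would verify condition (O), the exponential bound in the outer sector $\theta\le|\arg z|\le\pi$. Iterating the functional equation gives, for any $m$,
\[
    \tilde{f}(z)=\sum_{0\le k\le m}\binom{m}{k}
    \tilde{g}\bigl(p^{m-k}q^{k}z\bigr)\cdot(\text{something})
    +\sum_{k}\binom{m}{k}\tilde{f}\bigl(p^{m-k}q^{k}z\bigr),
\]
more precisely $\tilde{f}(z)=\sum_{j+l=m}\binom{m}{j}\tilde{f}(p^jq^lz)+\sum_{i<m}\sum_{j+l=i}\binom{i}{j}\tilde{g}(p^jq^lz)$; multiplying by $e^{z}$ and choosing $m$ large so that all the arguments $p^jq^lz$ have modulus $\le 1$, the $\tilde{f}$-terms are $O(1)$ there (as $\tilde{f}$ is entire, hence bounded on $|z|\le1$), and the $\tilde{g}$-terms contribute $e^{z}\tilde{g}(p^jq^lz)=e^{(1-p^jq^l)z}\cdot g(p^jq^lz)=O(e^{(1-\ve')|z|})$ using $\tilde{g}\in\JS$ in the appropriate sector and $\Re(z)$ bounded below by $-|z|$. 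Assembling these with care about which sector each rotated argument lands in gives (O). The real work is condition (I): polynomial growth of $\tilde{f}$ inside the sector $|\arg z|\le\theta$. Here I would pass to the Mellin transform. From $\tilde{g}\in\JS$ the Exponential Smallness Lemma yields $\mathscr{M}[\tilde{g};s]=O(e^{-\theta|\Im s|})$ on a vertical strip; then \eqref{fz-inv-mellin} represents $\tilde{f}(z)$ (after checking convergence) as the inverse Mellin integral, and shifting the contour past the zeros of $1-p^{-s}-q^{-s}$ — whose real parts are bounded, by the cited analysis of these zeros in \cite{PF055,drmota11a,schachinger00a} — produces a finite sum of residue terms of the form $z^{-s_0}\times(\text{periodic})$ plus a remainder integral, all of which are $O(|z|^{\alpha}(\log_+|z|)^{\beta})$ for a suitable $(\alpha,\beta)$ determined by the growth of $\tilde{g}$. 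The periodicity/logarithmic factors are exactly what feed into $\JS_{\alpha,\beta}$.

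The main obstacle I anticipate is making the Mellin-transform argument rigorous rather than merely formal: one must justify that $\mathscr{M}[\tilde{g};s]$ actually exists on a strip (which needs both a $z\to0$ bound and a $z\to\infty$ bound on $\tilde{g}$ inside a sector — the former from $\tilde{g}$ being entire with $\tilde{g}(0)$ controlled, the latter from condition (I) for $\tilde{g}$), that the inverse integral \eqref{fz-inv-mellin} converges absolutely and represents $\tilde{f}$ (not just some solution of the functional equation — uniqueness must be pinned down by the growth restriction plus the prescribed value $f(0)$), and that the contour shift is legitimate, i.e. the horizontal pieces vanish, which is precisely where the exponential decay of $\mathscr{M}[\tilde{g};s]$ from the Exponential Smallness Lemma is indispensable. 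A cleaner alternative that sidesteps some of this is a direct iteration/bootstrapping estimate: assume a priori $\tilde{f}(z)=O(|z|^{A})$ in the sector for some large $A$ (which follows from entireness plus a crude bound), plug into the iterated functional equation above with $m\asymp\log|z|$, and show the exponent can be pushed down to the correct $\alpha$; I would try this route first if the Mellin bookkeeping gets unwieldy, and keep the Mellin computation as the source of the sharp periodic form.
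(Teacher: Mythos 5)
The easy direction ($\tilde{f}\in\JS\Rightarrow\tilde{g}\in\JS$ by closure under scaling and sums) and your iteration sketch for condition \textbf{(O)} are fine; the gap lies in condition \textbf{(I)} for the substantive direction $\tilde{g}\in\JS\Rightarrow\tilde{f}\in\JS$, which is exactly where you put the Mellin machinery. As it stands that route is circular: writing $\mathscr{M}[\tilde{f};s]=\mathscr{M}[\tilde{g};s]/(1-p^{-s}-q^{-s})$ and invoking \eqref{fz-inv-mellin} presupposes that $\tilde{f}$ has a Mellin transform, i.e.\ polynomial growth in the sector --- which is precisely what \textbf{(I)} asserts. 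You do flag the need to show that the inverse integral ``represents $\tilde{f}$, not just some solution,'' but flagging it is not proving it: one would have to define $F$ as the inverse integral, verify that it satisfies the functional equation in the sector, and then kill the difference $u=\tilde{f}-F$ by a uniqueness argument for the homogeneous equation (e.g.\ $u(z)=O(|z|^{2-\theta})$ near $0$ after normalization, so iterating $u(z)=\sum_{j+l=m}\binom{m}{j}u(p^jq^lz)$ forces $u\equiv0$); none of this appears in the proposal, and without it nothing about the growth of $\tilde{f}$ has been established. Your fallback is not a repair: the claim that an a priori bound $\tilde{f}(z)=O(|z|^{A})$ in the sector ``follows from entireness plus a crude bound'' is false ($e^{z}$ is entire), and such a bound is exactly the content of \textbf{(I)}; the only possible source for it is the functional equation itself.

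The paper's own proof avoids Mellin inversion of $\tilde{f}$ entirely and is much shorter: it sets $\tilde{B}(r):=\max\{|\tilde{f}(z)|:|z|\le r,\ |\arg(z)|\le\theta\}$, which is finite for every $r$ because $\tilde{f}$ is entire, observes that the functional equation together with \textbf{(I)} for $\tilde{g}$ gives $\tilde{B}(r)\le\tilde{B}(pr)+\tilde{B}(qr)+O\bigl(r^{\alpha}(\log_{+}r)^{\beta}+1\bigr)$, and then compares with the majorant $\tilde{K}$ defined by the corresponding exact recurrence, whose growth ($O(r)$, $O(r^{\alpha}(\log_{+}r)^{\beta+1})$ or $O(r^{\alpha}(\log_{+}r)^{\beta})$ according as $\alpha<1$, $\alpha=1$, $\alpha>1$) follows from a standard Mellin argument applied to $\tilde{K}$ alone, or from \cite{schachinger95b}. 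This is essentially a rigorous version of your ``bootstrapping'' idea, but phrased for the sectorial maximum so that no a priori polynomial bound is needed, and it explains the logical order in the paper: the Mellin-based refined transfer of Proposition~\ref{prop-at2} is legitimate only after Proposition~\ref{prop-at} has secured polynomial growth of $\tilde{f}$. If you want to keep your Mellin route for \textbf{(I)}, you must add the $F$-versus-$\tilde{f}$ identification step above; otherwise the majorization argument is the economical fix.
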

\pf The proof is similar to and simpler than that of Proposition 2.4
in \cite{hwang10a}. Thus we only give the proof for ({\bf I}). Define
\[
    \tilde{B}(r):=\max_{\substack{\vert z\vert\le
    r\\ \vert\arg(z)\vert\le\ve}}\vert\tilde{f}(z)\vert.
\]
Then
\[
    \tilde{B}(r)\le\tilde{B}(pr)+\tilde{B}(qr)+
    O\left(r^{\alpha}(\log_{+}r)^{\beta}+1\right).
\]
Now define a majorant function $\tilde{K}(r)$ by
\[
    \tilde{K}(r)=\tilde{K}(pr)+\tilde{K}(qr)+
    C\left(r^{\alpha}(\log_{+}r)^{\beta}+1\right),
\]
where $C>0$. Then $\tilde{B}(r)\le\tilde{K}(r)$ for a sufficiently
large $C>0$, and by standard Mellin argument \cite{PF120} or by the
proof used in \cite{schachinger95b}
\[
    \tilde{K}(r)=\left\{\begin{array}{ll} O(r),&\text{if}\
    \alpha<1;\\ O(r^{\alpha}(\log_{+}r)^{\beta+1}),
    &\text{if}\ \alpha=1;\\
    O(r^{\alpha}(\log_{+}r)^{\beta}),
    &\text{if}\ \alpha>1.\end{array}\right\}
\]
This completes the proof.\qed

We now refine the asymptotic transfer and focus on asymptotically
linear functions.

\begin{prop}\label{prop-at2} Let $\tilde{f}$ and $\tilde{g}$ be
entire functions related to each other by the functional equation
$\tilde{f}(z)=\tilde{f}(pz)+\tilde{f}(qz)+\tilde{g}(z)$ with $f(0)$
given. Assume $0<\theta<\pi/2,\alpha<1$ and $\beta\in\mathbb{R}$.
\begin{itemize}
\item[(a)] If $\tilde{g}(z)=O(\vert z\vert^{\alpha}(\log_{+}\vert
z\vert)^{\beta})$, where the $O$-term holds uniformly for $|z|\ge1$
and $\vert\arg(z)\vert\le\theta$, then, as $|z|\rightarrow\infty$ in
the same sector,
\[
    \frac{\tilde{f}(z)}{z}= \frac{G(-1)}{h}
    +\mathscr{F}[G](r\log_{1/p}z)
    +o(1),
\]
where the notations $\mathscr{F}[G](x)$ and $r$ are defined in
\eqref{FGx}.

\item[(b)] If $\tilde{g}(z)=cz+O(\vert z\vert^{\alpha}(\log_{+}\vert
z\vert)^{\beta})$ uniformly for $|z|\ge1$ and $\vert\arg(z)\vert
\le\theta$, then, as $|z|\rightarrow\infty$ in the same sector,
\[
    \frac{\tilde{f}(z)}{z} =
    \frac{c}{h}\log z+h_0
    +\mathscr{F}[G](r\log_{1/p}z)+o(1),
\]
where 
\begin{align}\label{h0}
	h_0 := \frac{c_0}{h} +\frac{c(p\log^2p+q\log^2 q)}{2h^2},
\end{align}	
$G(s)$ is the meromorphic continuation of $\mathscr{M}[\tilde{g};s]$, 
and
\[
    c_0 : =\lim_{s\rightarrow-1}
    \left(G(s)+\frac{c}{s+1}\right) .
\]
\end{itemize}
\end{prop}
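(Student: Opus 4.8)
\medskip\noindent\textbf{Proof plan.} The plan is to run the standard Mellin analysis of the trie functional equation, the only real subtlety being that $\tilde f$ itself is of exact order $|z|$ in case~(a), resp.\ $|z|\log|z|$ in case~(b), along the positive axis, and so has no Mellin transform on a vertical line. I would get around this by analysing the second derivative $\tilde f''$, which decays, and then integrating back twice. First, Proposition~\ref{prop-at} and the majorant estimate in its proof give the crude bounds $\tilde f(z)=O(|z|)$, resp.\ $O(|z|\log_{+}|z|)$, uniformly for $|\arg z|\le\theta'$ and any $\theta'<\theta$; shrinking the sector is harmless and lets Cauchy's estimate upgrade the hypothesis on $\tilde g$ to $\tilde g''(z)=O(|z|^{\alpha-2}(\log_{+}|z|)^{\beta})$ there. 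Differentiating the functional equation twice gives
\[
    \tilde f''(z)=p^2\tilde f''(pz)+q^2\tilde f''(qz)+\tilde g''(z),
\]
and, since $\tilde g'(0)=0$ (differentiate the equation at $z=0$), one has $\mathscr M[\tilde g'';s]=(s-1)(s-2)\,G(s-2)$ as meromorphic functions, with $\mathrm{Res}_{w=-1}G=-c$ ($c=0$ in case~(a)), so that in case~(b) $G$ is its finite part plus the simple pole at $-1$ featuring in the definition of $c_0$. The Exponential Smallness Lemma applied to $\tilde g''$ makes $\mathscr M[\tilde g'';s]=O(e^{-\theta|\Im(s)|})$ on a strip containing $0<\Re(s)<1$, which is the fundamental strip of $\mathscr M[\tilde f'';s]$; on it the functional equation yields $\mathscr M[\tilde f'';s]=(s-1)(s-2)G(s-2)/(1-p^{2-s}-q^{2-s})$, hence
\[
    \tilde f''(z)=\frac{1}{2\pi i}\int_{(c)}
    \frac{(s-1)(s-2)\,G(s-2)\,z^{-s}}{1-p^{2-s}-q^{2-s}}\dd s ,
    \qquad 0<c<1 .
\]

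The next step is to push the line of integration to $\Re(s)=c'$, $1<c'<2-\alpha$. This is legitimate because $1-p^{-w}-q^{-w}$ has no zero with $\Re(w)>-1$, its rightmost zeros being $w=-1+\chi_k$ (see \cite{PF055,schachinger00a}), so in the open strip $1<\Re(s)<c'$ neither $1-p^{2-s}-q^{2-s}$ has a zero nor $G(s-2)$ a pole; one therefore collects exactly the residues on the line $\Re(s)=1$, i.e.\ at $s=1+\chi_k$. At $s=1$ the factor $(s-1)$ cancels the at-most-simple pole of $G(s-2)$, leaving a simple pole from the zero of the kernel that contributes $c/(hz)$; the poles at $s=1+\chi_k$, $k\ne0$, are simple and contribute a mean-zero, period-$1$ Fourier series over $z$ whose $k$-th coefficient is $h^{-1}\chi_k(\chi_k-1)G(-1+\chi_k)$ (vacuous when $\log p/\log q\notin\mathbb Q$). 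The shifted integral is $o(|z|^{-1})$ since $c'>1$ (in the irrational case this uses the classical estimates for $(1-p^{-s}-q^{-s})^{-1}$ on vertical lines; cf.\ \cite{PF055,PF208}). Thus, uniformly in the sector,
\[
    \tilde f''(z)=\frac{c}{hz}+\frac{1}{z}\,\Psi\bigl(r\log_{1/p}z\bigr)+o\bigl(|z|^{-1}\bigr).
\]

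Finally one integrates this twice. Using $\tfrac{\mathrm{d}}{\mathrm{d}z}e^{2k\pi i\,r\log_{1/p}z}=-\chi_k\,z^{-1}e^{2k\pi i\,r\log_{1/p}z}$, the term $z^{-1}\Psi(r\log_{1/p}z)$ integrates to a period-$1$ function and, after the second integration, to $z$ times a period-$1$ function; a short computation shows the Fourier coefficients collapse to $h^{-1}G(-1+\chi_k)$, i.e.\ the oscillating part of $\tilde f(z)/z$ is exactly $\mathscr F[G](r\log_{1/p}z)$, while $c/(hz)$ integrates to $(c/h)\log z$ and then $(c/h)z\log z$, and the error integrates to a constant plus $o(z)$. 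The non-oscillating constant equals the constant term of the Laurent expansion at $s=-1$ of $z^{-1}\cdot G(s)z^{-s}/(1-p^{-s}-q^{-s})$: plugging in $G(s)=-c/(s+1)+c_0+O(s+1)$ and
\[
    1-p^{-s}-q^{-s}=-h(s+1)-\tfrac{1}{2}(p\log^2p+q\log^2q)(s+1)^2+\cdots
\]
gives $G(s)/(1-p^{-s}-q^{-s})=c/\bigl(h(s+1)^2\bigr)-h_0/(s+1)+O(1)$ with exactly $h_0=c_0/h+c(p\log^2p+q\log^2q)/(2h^2)$, so the $p\log^2p+q\log^2q$ summand of \eqref{h0} is nothing but the curvature of the kernel at its dominant zero; case~(a) is the specialization $c=0$, for which $c_0=G(-1)$ and the double pole degenerates to a simple one.

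The step I expect to be the true obstacle is the passage $\tilde f''\rightsquigarrow\tilde f$ rather than any isolated estimate: one must carry the $o(|z|^{-1})$ error and the two constants of integration through both integrations while keeping the identification of the Fourier series and of $h_0$ exact — the arithmetic of the double pole $c/\bigl(h(s+1)^2\bigr)$ when $c\ne0$ is precisely what produces the $\log z$ term and the second summand of $h_0$. One should also be mindful that the functional equation alone leaves a one-parameter family of solutions (adding $Cz$ to $\tilde f$ changes neither $\tilde g$ nor $G$), so the additive constant in the final expansion is the one attached to the solution singled out by the prescribed data, and this has to be woven into the constant bookkeeping.
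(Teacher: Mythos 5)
Your route (differentiate the functional equation twice, Mellin-analyse $\tilde f''$ in the strip $0<\Re(s)<1$, integrate back) is genuinely different from the paper's, but it is both unnecessary and, as sketched, it has two concrete defects. It is unnecessary because your premise that $\tilde f$ ``has no Mellin transform on a vertical line'' is false after the harmless normalization $\tilde f(0)=\tilde f'(0)=\tilde g(0)=\tilde g'(0)=0$ (only the coefficient of $z$ is free in the solution): then $\tilde f(z)=O(|z|^2)$ at the origin and $O(|z|\log_{+}|z|)$ on the positive axis, so $\mathscr{M}[\tilde f;s]=G(s)/(1-p^{-s}-q^{-s})$ exists in $\langle-2,-1\rangle$, and the paper simply continues $G$ to $\langle-2,-\alpha-\ve\rangle$, applies the Exponential Smallness Lemma, and reads off everything from the residues at $s=-1$ (a double pole in case (b), whose Laurent arithmetic you do carry out correctly, yielding $(c/h)\log z$ and $h_0$) and at $-1+\chi_k$. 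The first defect is your justification of the contour shift: the zeros $-1+\chi_k$ of $1-p^{-w}-q^{-w}$ are its \emph{leftmost} zeros, not its rightmost; for $p\ne q$ there are infinitely many zeros with $\Re(w)>-1$, and in the irrational case their real parts accumulate at $-1$ from the right (this is precisely what \cite{PF055,schachinger00a} establish and why the irrational error analysis is delicate). Hence the strip $1<\Re(s)<c'$ is in general not free of poles of your integrand; in the rational case you must either lower $c'$ or account for further (admittedly $o(|z|^{-1})$) residues, and in the irrational case no fixed $c'>1$ is available at all, so the best uniform statement for $\tilde f''$ is $c/(hz)+z^{-1}\Psi+o(|z|^{-1})$.

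The second, decisive defect is the passage $\tilde f''\to\tilde f$, which you yourself call the true obstacle but do not resolve. An error $o(|z|^{-1})$ integrates to $o(\log|z|)$ for $\tilde f'$ and to $o(|z|\log|z|)$ for $\tilde f$, which swamps both $h_0z$ and the periodic term (in case (b) it rivals everything except $\tfrac{c}{h}z\log z$); you would need an $O(|z|^{-1-\delta})$ remainder for $\tilde f''$, which is exactly what the irrational case denies you. Moreover the two integrations introduce an undetermined affine term, and its linear part $C_1z$ is precisely the homogeneous solution you mention at the end: it shifts the constant $G(-1)/h$, resp.\ $h_0$, by $C_1$. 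Your statement that ``the non-oscillating constant equals the constant term of the Laurent expansion at $s=-1$'' is therefore not a computation but the very claim to be proved; nothing in the sketch ties $C_1$ to $G$ or to the particular solution fixed by the data. A repair would be to integrate the inverse-Mellin representation of $\tilde f''$ termwise from $0$ (using $\int_0^z w^{-s}\dd w=z^{1-s}/(1-s)$, legitimate since $\Re(s)<1$), i.e.\ to reconstruct a Mellin representation of $\tilde f$ itself with explicit constants --- but that is exactly the paper's one-step argument, so the detour buys nothing.
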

\pf Without loss of generality, we may assume that $\tilde{f}(0)
=\tilde{f}'(0)=\tilde{g}(0)=\tilde{g}'(0)=0$. Then both Mellin
transforms exist in the strip $\langle -2,-1\rangle$ and
\[
    \mathscr{M}[\tilde{f};s]
    =\frac{G(s)}
    {1-p^{-s}-q^{-s}}.
\]
Note that $G(s)$ can be extended to a meromorphic function in the
strip $\langle-2,-\alpha-\ve\rangle$. In the case of (\emph{a}),
$G(s)$ is analytic on the line $\Re(s)=-1$ while in the case of
(\emph{b}) $G(s)$ has a unique simple pole on $\Re(s)=-1$ at $s=-1$
with the local expansion $G(s)=-c/(s+1)+ c_0+\cdots$. Note that by
applying the Exponential Smallness Lemma (\cite[Prop. 5]{PF120}), we
have the estimate
\[
    |G(\sigma+it)| =O\left(e^{-\theta|t|}\right),
\]
uniformly for large $|t|$ and $\sigma \in \langle-2, -\alpha-\ve
\rangle$. Thus the Proposition follows from standard Mellin analysis
(see \cite{PF120}) and known properties of the zeros of
$1-p^{-s}-q^{-s}$ (see \cite{PF055}). \qed

In the symmetric case when $p=q=1/2$, both error terms $o(1)$ in the
Proposition can be improved to $O(\max\{1,|z|^{\alpha-1} (\log
|z|)^\beta)\}$. Indeed, all error terms in such a case in this paper
can be improved by standard argument; we focus instead on the
Fourier series expansion in this paper.

\subsection{A Hadamard product for Poisson generating functions}

We need a new closure property for the analysis of the variance.
Given two Poisson generating functions
\[
    \tilde{f}(z)=e^{-z}\sum_{n\ge 0}\frac{a_n}{n!}\,z^n
    \qquad\text{and}\qquad
    \tilde{g}(z)=e^{-z}\sum_{n\ge 0}\frac{b_n}{n!}\,z^n,
\]
we define the \emph{Hadamard product} of these two functions as
\[
    \tilde{h}(z):= \tilde{f}(z) \odot \tilde{g}(z)=
    e^{-z}\sum_{n\ge 0}\frac{a_nb_n}{n!}\,z^n.
\]
This definition differs from but coincides with the usual one
if we consider the corresponding exponential generating functions.
We show that JS-admissibility is closed under the Hadamard product.
The proof is subtle and delicate.

\begin{prop}\label{prop-hada}
If $\tilde{f}\in\JS_{\!\!\!\alpha_1,\beta_1}$ and
$\tilde{g}\in\JS_{\!\!\!\alpha_2,\beta_1}$, then
$\tilde{h}\in\JS_{\!\!\!\alpha_1+\alpha_2,\beta_1+\beta_2}$.
More precisely, we have
\begin{align}\label{hfg-hada}
    \tilde{h}(z)=\tilde{f}(z)\tilde{g}(z)
    +z\tilde{f}'(z)\tilde{g}'(z)+
    O\left(\vert z\vert^{\alpha_1+\alpha_2-2}(\log_{+}\vert
    z\vert)^{\beta_1+\beta_2}\right),
\end{align}
uniformly as $|z|\to\infty$ and $\vert \arg(z)\vert\le\theta$, where
$0<\theta<\pi/2$.
\end{prop}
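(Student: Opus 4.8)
The plan is to start from the two integral representations that the $\JS$-conditions make rigorous. Write $\tilde f(z)=e^{-z}\sum_n \frac{a_n}{n!}z^n$ and $\tilde g(z)=e^{-z}\sum_n \frac{b_n}{n!}z^n$, so that $\tilde h(z)=e^{-z}\sum_n \frac{a_nb_n}{n!}z^n$. Since $\tilde f\in\JS_{\alpha_1,\beta_1}$, Proposition~\ref{prop-PC} with $k=2$ gives $a_n=\tilde f(n)+\tfrac12 n\,\tilde f''(n)+O(n^{\alpha_1-2}\log^{\beta_1}n)$ (recall $\tau_0=1$, $\tau_1=0$, $\tau_2(n)=-n$; more convenient is the form $a_n = \tilde f(n) - \tfrac12 n\tilde f''(n)+\cdots$, with the sign fixed from the table), and similarly for $b_n$. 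Multiplying the two expansions, one gets
\begin{align*}
    a_nb_n = \tilde f(n)\tilde g(n) + \text{(cross terms with one second derivative)} + \text{(products of error terms)}.
\end{align*}
The cross terms are $\asymp n\,\tilde f''(n)\tilde g(n)$ and $\asymp n\,\tilde f(n)\tilde g''(n)$, both of order $n^{\alpha_1+\alpha_2-1}(\log n)^{\beta_1+\beta_2}$ by the derivative estimates $\tilde f^{(j)}(n)\asymp n^{-j}\tilde f(n)$ valid for $\JS$ functions (Proposition~\ref{prop-closure}(vi) plus condition~(I)); the products of error terms are smaller still. So the ``data-level'' statement $a_nb_n = \tilde f(n)\tilde g(n) + O(n^{\alpha_1+\alpha_2-1}(\log n)^{\beta_1+\beta_2})$ holds, but this is too lossy: the claimed identity \eqref{hfg-hada} has the correction term $z\tilde f'(z)\tilde g'(z)$ appearing at order $|z|^{\alpha_1+\alpha_2-1}$, so I must track that term exactly, not absorb it into an error.

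The clean way to do this at the generating-function level is to note that, for entire $\tilde f$, the map $a_n \mapsto \tilde f(z)$ and its inverse are governed by the operators $e^{\pm z}$ acting on power series, and that the Hadamard product $\sum \frac{a_nb_n}{n!}z^n$ can be written as a contour integral
\begin{align*}
    e^{z}\tilde h(z) = \sum_{n\ge0}\frac{a_nb_n}{n!}z^n
    = \frac{1}{2\pi i}\oint \Big(\sum_{n\ge0}\frac{a_n}{n!}\frac{z^n}{w^{n+1}}\Big)\Big(\sum_{m\ge0}\frac{b_m}{m!}w^m\Big)\dd w,
\end{align*}
i.e. $e^{z}\tilde h(z) = \frac{1}{2\pi i}\oint \big(e^{z/w}\tilde f(z/w)\big)\big(e^{w}\tilde g(w)\big)\,\frac{\dd w}{w}$ after the substitution matching the two series. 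Equivalently, and more robustly, I would use the Laplace/Borel-type representation: write $\tilde f$ via the saddle-point integral $a_n = \frac{n!}{2\pi i}\oint z^{-n-1}e^{z}\tilde f(z)\dd z$, insert $a_nb_n$, and resum over $n$. This produces a double contour integral for $\tilde h(z)$ whose inner integral is of de-Poissonization type; here the $\JS$-bounds — polynomial growth in the sector from~(I), exponential smallness outside from~(O) — are exactly what is needed to justify interchanging sums and integrals and to localize the integral near the real saddle. Carrying out a second-order saddle-point expansion on the resulting integral yields the main term $\tilde f(z)\tilde g(z)$ together with the next-order term; the term $z\tilde f'(z)\tilde g'(z)$ emerges precisely from the quadratic (Gaussian) fluctuation around the saddle, because differentiating $e^{z/w}\tilde f(z/w)$ in $w$ brings down a factor involving $\tilde f'$, and the Gaussian width contributes the compensating factor of $z$. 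I would then verify the $\JS$-membership of $\tilde h$ separately: condition~(I) follows from $|\tilde f(z)\tilde g(z)|+|z\tilde f'(z)\tilde g'(z)| = O(|z|^{\alpha_1+\alpha_2}(\log_+|z|)^{\beta_1+\beta_2})$ in the sector, and condition~(O) — the bound $e^{z}\tilde h(z)=O(e^{(1-\ve)|z|})$ for $\theta\le|\arg z|\le\pi$ — is obtained by splitting the defining series $\sum \frac{a_nb_n}{n!}z^n$ and using $|a_n|, |b_n| = O(n^{\alpha_i}\log^{\beta_i}n)$ (a consequence of Proposition~\ref{prop-PC} with $k=0$) so that the series is dominated term-by-term by $e^{|z|}$ times a polynomially-growing factor, which is far better than needed.

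The main obstacle I expect is the uniformity of the second-order saddle-point estimate as $|z|\to\infty$ over the whole sector $|\arg z|\le\theta$, and in particular getting the error term uniformly down to $O(|z|^{\alpha_1+\alpha_2-2}(\log_+|z|)^{\beta_1+\beta_2})$ rather than merely $o(|z|^{\alpha_1+\alpha_2-1})$. This requires: (i) a careful choice of the integration contour (a circle of radius $\asymp|z|$, or the line $\Re w = |z|$ in a Laplace formulation) together with a proof that the tails, where one must fall back on the exponential bound~(O) for $\tilde f$ and $\tilde g$ off the real axis, contribute exponentially small amounts relative to the main term; and (ii) controlling the third-order term in the saddle-point expansion, which needs a bound on $\tilde f'''$, available from Proposition~\ref{prop-closure}(vi) and~(I). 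A technically lighter alternative, which I would actually prefer to write up, is to avoid the saddle-point machinery entirely: use Proposition~\ref{prop-PC} in the sharp form $a_n=\tilde f(n)+\frac{\tau_2(n)}{2}\tilde f''(n)+\frac{\tau_3(n)}{6}\tilde f'''(n)+O(n^{\alpha_1-2}\log^{\beta_1}n)$, multiply with the corresponding expansion for $b_n$, keep all terms of order $\ge n^{\alpha_1+\alpha_2-2}\log^{\cdot}n$, and then \emph{re-Poissonize}: apply the (convergent, since everything is entire) Poisson–Charlier transform back to $a_nb_n$ and recognize that $\sum_n \frac{z^n}{n!}e^{-z}\big[\tilde f(n)\tilde g(n) - \tfrac12 n\,\tilde f''(n)\tilde g(n)-\tfrac12 n\,\tilde f(n)\tilde g''(n)+\cdots\big]$ telescopes, via the identity $e^{-z}\sum_n \frac{z^n}{n!}\phi(n) = \tilde\phi(z)+\tfrac12 z\tilde\phi''(z)+\cdots$ run in reverse, to $\tilde f(z)\tilde g(z)+z\tilde f'(z)\tilde g'(z)$ up to the stated error. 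The bookkeeping of which combinations of $\tilde f,\tilde g$ and their derivatives survive — and the check that the surviving combination is exactly $\tilde f\tilde g + z\tilde f'\tilde g'$ and not something else — is the delicate point, but it is elementary calculus once organized by order of magnitude in $|z|$, and it is where the hypothesis $\alpha_1,\alpha_2$ (hence $\alpha_1+\alpha_2$) enters to tell us how many terms of the Poisson–Charlier expansion must be retained.
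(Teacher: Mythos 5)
Your first route---representing the Hadamard product by a contour integral and extracting the correction term from the Gaussian fluctuations of a saddle-point/Laplace expansion---is in spirit the paper's actual proof (the paper sets $J(z)=\frac1{2\pi}\int_{-\pi}^{\pi}f(ze^{it})g(ze^{-it})\,\dd t$ with $f=e^z\tilde f$, $g=e^z\tilde g$, resums with the kernel $\int_0^\infty ue^{-u^2}J(zu)\,\dd u$, localizes the integrals using \textbf{(I)} and \textbf{(O)}, and obtains $z\tilde f'\tilde g'$ from the $(k,l)=(1,1)$ Gaussian moment after a Taylor expansion to order $N=4$). But as written your sketch contains concrete errors. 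First, your displayed identity is false: $\frac1{2\pi i}\oint \bigl(e^{z/w}\tilde f(z/w)\bigr)\bigl(e^{w}\tilde g(w)\bigr)\frac{\dd w}{w}$ equals $\sum_n a_nb_n z^n/(n!)^2$, not $e^z\tilde h(z)$; there is one factor $n!$ too many in the denominator, and repairing exactly this mismatch is the purpose of the paper's extra integration against $ue^{-u^2}$ (or some equivalent resummation), which your plan omits. Second, your verification of condition \textbf{(O)} does not work: bounding $\sum_n|a_nb_n|\,|z|^n/n!$ term by term with $|a_nb_n|=O\bigl(n^{\alpha_1+\alpha_2}(\log n)^{\beta_1+\beta_2}\bigr)$ only yields $e^z\tilde h(z)=O\bigl(|z|^{\alpha_1+\alpha_2}(\log_+|z|)^{\beta_1+\beta_2}e^{|z|}\bigr)$, which is \emph{weaker}, not ``far better,'' than the required $O\bigl(e^{(1-\ve)|z|}\bigr)$; the exponential saving must come from cancellation, i.e.\ from exploiting the exponential smallness of $f$ or $g$ away from the positive real axis inside the convolution integral, and this occupies a substantial part of the paper's proof.

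Third, the ``technically lighter alternative'' (expand $a_n,b_n$ by Proposition~\ref{prop-PC}, multiply, then re-Poissonize and telescope) is not sound as a route to \eqref{hfg-hada}, because the conclusion is a statement about complex $z$ in a sector while Proposition~\ref{prop-PC} only gives asymptotics of the coefficients along the integers, and coefficient-level error bounds do not transfer to sector bounds: if $\epsilon_n=O(n^\gamma)$ one may \emph{not} conclude $e^{-z}\sum_n\epsilon_nz^n/n!=O(|z|^\gamma)$ for $\arg z\ne 0$. For instance $\epsilon_n=e^{icn}$ is bounded, yet $e^{-z}\sum_n\epsilon_nz^n/n!=e^{z(e^{ic}-1)}$ grows exponentially along rays $\arg z=\theta$ with $\Re\bigl(e^{i\theta}(e^{ic}-1)\bigr)>0$, which occur inside any sector $|\arg z|\le\theta_0$. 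The passage from integer data to uniform bounds in a complex sector is precisely the direction that the JS machinery does not provide (analytic de-Poissonization goes the other way), and establishing condition \textbf{(I)} for $\tilde h$ in the sector is exactly the hard content of the proposition; it cannot be imported from the asymptotics of $a_nb_n$. The workable core of your proposal is therefore the double-integral plan, but it must be completed with a correct resummation, tail estimates drawn from \textbf{(O)} of $\tilde f,\tilde g$, and a Taylor expansion carried far enough (order four) to secure the uniform error $O\bigl(|z|^{\alpha_1+\alpha_2-2}(\log_+|z|)^{\beta_1+\beta_2}\bigr)$, as is done in the paper.
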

Our proof indeed gives an asymptotic expansion for $\tilde{h}$;
we content ourselves with the statement of \eqref{hfg-hada}, which is
sufficient for our purposes.

\begin{proof}
Let $f(z):=e^{z}\tilde{f}(z), g(z):=e^{z}\tilde{g}(z)$ and
$h(z):=e^{z}\tilde{h}(z)$. Also let $0<\theta_0<\pi/2$ be an angle
where \textbf{(I)} holds for both $\tilde{f}(z)$ and $\tilde{g}(z)$.
Note that conditions \textbf{(I)} and \textbf{(O)} remain true
if $\theta_0$ is replaced by an arbitrarily small but
fixed angle $0<\theta\le \theta_0$ with a suitable choice of $\ve
=\ve(\theta)$.

We prove the proposition in the special case when $\beta_1
=\beta_2=0$ since the proof in the general case remains the same
with only additional logarithmic terms in the corresponding error
estimates.

Define
\[
    J(z):=\sum_{n\ge0}\frac{a_nb_n}{(n!)^2}\,z^{2n}
    =\frac{1}{2\pi}\int_{-\pi}^\pi f(ze^{it})g(ze^{-it})\dd t.
\]
Substituting here $z\to zu$, multiplying both sides by $ue^{-u^2}$,
integrating from $0$ to infinity and multiplying the result by
$e^{-z^2}/2$, we obtain
\[
    \tilde{h}(z^2)=e^{-z^2}\sum_{n\ge0}\frac{a_nb_n}{n!}\,z^{2n}
    =\frac{e^{-z^2}}{2}\int_0^\infty ue^{-u^2}J(zu)\dd u.
\]
We now fix a $0<\theta<\theta_0$. We first show that $h(z)$
satisfies condition \textbf{(O)} for $z$ lying outside the sector
$|\arg(z)|\le \theta$. Assume $\theta/2 \le |y|\le \pi/2$. Then
\[
    J(re^{iy})=\frac{1}{2\pi}\int_{-\pi}^\pi
    f(re^{i(t+y)})g(re^{i(y-t)})\dd t.
\]
Note that \textbf{(I)} and \textbf{(O)} imply that
\begin{align}\label{fg-O}
    f(z)=O\left((|z|^{\alpha_1}+1) e^{|z|}\right),
    \quad\text{and}\quad
    g(z)=O\left((|z|^{\alpha_2}+1) e^{|z|}\right),
\end{align}
uniformly for $z\in \mathbb{C}$. Now making the change of variables
$t\mapsto t-y$ and taking into account that the function under the
integral sign is periodic, we see that
\[
\begin{split}
    J(re^{iy})&=\frac{1}{2\pi}\int_{-\pi}^\pi
    f(re^{it})g(re^{i(2y-t)})\dd t \\
    &=\frac{1}{2\pi}\int_{-y}^{y}
    f(re^{it})g(re^{i(2y-t)})\dd t +O\left(
    (r^{\alpha_2}+1) e^{(2-\ve)r}\right).
\end{split}
\]
Here we evaluated the integral over the region $|y|\le |t|\le \pi$
by the estimate $|f(re^{it})|=O\bigl( e^{(1-\ve)r}\bigr)$, which
follows from \textbf{(O)}, and used the upper bound of (\ref{fg-O})
for $|g(re^{i(2y-t)})|$.
In a similar way, we note that $0<\theta/2\le |2y-t|\le 3\pi/2$
whenever $|t|\le |y|$ and $\theta\le|y|\le\pi/2$. This means that
$z=re^{i(2y-t)}$ lies inside the sector $|\arg(z)|\ge \theta/2$ and
as a consequence we can use estimates $|g(re^{i(2y-t)})|
=O\bigl(e^{(1-\ve)r}\bigr)$ and $|f(re^{it})|=O\bigl(
(r^{\alpha_1}+1) e^{r}\bigr)$ to evaluate the integral over the
range $|t|\le |y|$. Combining these estimates, we get
\[
\begin{split}
    J(re^{iy})&= O\left((r^{\alpha_1}+r^{\alpha_2}+1)
    e^{(2-\ve)r}\right)\\
    &=O\left(e^{2\ve_1 r}\right),
\end{split}
\]
where $\ve_1$ is chosen such that $(2-\ve)/2<\ve_1<1$. This estimate
yields
\[
    e^{r^2e^{2iy}}\tilde{h}(r^2e^{2iy})
    =O\left(\int_0^\infty ue^{2\ve_1 ru} e^{-u^2}\dd u\right)
    =O\left(r e^{\ve_1^2 r^2}\right),
\]
which implies, by replacing $r\to \sqrt{r}$ and $y\to y/2$, the
estimate
\[
    e^{re^{iy}}\tilde{h}(re^{iy})
    =O\left(r e^{\ve_1^2 r}\right),
\]
in the region $\theta\le |y|\le \pi$ for any fixed $\theta>0$.
Thus condition \textbf{(O)} holds.

We now prove that $\tilde{h}(z)$ grows polynomially in the sector
$|\arg z|\le \theta$ with some sufficiently small $\theta>0$.

Note that $|\arg(ze^{\pm it})|\ge \theta_0/4$ for all values of $t$
and $z$ such that $|\arg z|\le \theta_0/4$ and $\pi\ge|t|\ge
\theta_0/2$, which, by \textbf{(I)} and \textbf{(O)}, implies that
$f(ze^{it})=O(e^{\ve_2|z|})$ and $g(ze^{it})=O(e^{\ve_2|z|})$ with a
suitable choice of $\varepsilon_2<1$ for all $z$ and $t$ satisfying
such restrictions. It follows that
\[
\begin{split}
    J(z)=\frac{1}{2\pi}\int_{-\theta_0/2}^{\theta_0/2}
    e^{2z\cos t}\tilde{f}(ze^{it})\tilde{g}(ze^{-it})
    \dd t +O\left(e^{2\ve_2|z|}\right),
\end{split}
\]
when $|\arg z|\le \theta_0/4$. Thus
\begin{equation}\label{estim_h_z_2}
\begin{split}
    \tilde{h}(z^2) =\frac{e^{-z^2}}{4\pi}
    \int_0^\infty ue^{-u^2}\!\!\int_{-\theta_0/2}^{\theta_0/2}
    e^{2zu\cos t}\tilde{f}(zue^{it})
    \tilde{g}(zue^{-it})\,\dd t \dd u
    +O\left(|z|e^{\ve_2^2|z|^2-\Re(z^2)}\right).
\end{split}
\end{equation}
Noting that $\Re (z^2) =\cos(2\arg(z))|z|^2\ge
(1-\arg(z)^2/2) |z|^2$, we then have
\begin{equation}\label{hz2}
    \tilde{h}(z^2)=I(z)+O\left(|z|
    e^{-\left(1-\ve_2^2-\arg(z)^2/2)\right)|z|^2}\right),
\end{equation}
where $I(z)$ denotes the double integral
\[
    I(z):=\frac{1}{4\pi}\int_0^\infty u\int_{|t|\le\theta_0/2}
    e^{-(u-z\cos t)^2-z^2\sin^2 t}\tilde{f}(zue^{it})
    \tilde{g}(zue^{-it})\dd t \dd u.
\]
Since $|\arg(z)|\le\theta_0/4$, the arguments
$zue^{it}$ and $zue^{-it}$ of the functions $\tilde{f}$ and
$\tilde{g}$ lie inside the sector $|\arg(z)|\le\theta_0$, which
means that $\tilde{f}(zue^{it}) =O(|z|^{\alpha_1})$ and $\tilde{g}
(zue^{-it}) =O(|z|^{\alpha_2})$.

Changing the order of integration and making a change of integration
path from the interval $u\in(0,\infty)$ to the line $(0,z\infty)$ by
mapping $u\mapsto uz\cos t$, we get
\[
\begin{split}
    I(z)&=\frac{z^2}{4\pi}\int_{-\theta_0/2}^{\theta_0/2} \!
    \int_0^\infty e^{-z^2(u-1)^2\cos^2 t-z^2\sin^2 t}
    \tilde{f}(z^2ue^{it}\cos t)
    \tilde{g}(z^2ue^{-it}\cos t)u\dd u
    \cos^2 t \dd t\\
    &=O\left({|z|^2}\int_{-\theta_0/2}^{\theta_0/2} \!
    \int_0^\infty \!\!
    e^{-\Re(z^2)(u-1)^2\cos^2 (\theta_0/2)-\Re(z^2)\sin^2 t}
    (|z^2|u+1)^{\alpha_1}(|z^2|u+1)^{\alpha_2}
    u\dd u \dd t\right)\\
    &=O\left(|z|^{2} \left(\int_0^\infty
    e^{-\Re(z^2)(u-1)^2\cos^2 (\theta_0/2)}
    (|z|^2u+1)^{\alpha_1+\alpha_2}u\dd u\right)
    \int_{|t|\le \theta_0/2} e^{-\Re(z^2)\sin^2 t}\dd t\right)\\
    &=O\left(\frac{(|z|+1)^{2+2\alpha_1+2\alpha_2}}
    {\Re(z^2)}\right)\\
    &= O\left(|z|^{2\alpha_1+2\alpha_2}\right),
\end{split}
\]
uniformly for large $|z|$ in the sector $|\arg(z)|\le\theta_0/4$.

Applying this estimate to the expression (\ref{estim_h_z_2}) of
$\tilde{h}(z^2)$, we obtain
\[
    \tilde{h}(z^2)=O\left(|z|^{2\alpha_1+2\alpha_2}\right).
\]
Thus
\[
    \tilde{h}(z)=O\left(|z|^{\alpha_1+\alpha_2}\right).
\]
for all $|\arg z|\le \theta \le\theta_0/2$, where $\theta$ is chosen
to be small enough to ensure that the error term in (\ref{hz2})
decreases exponentially fast. This proves the first part of the
proposition.

To prove the finer estimate \eqref{hfg-hada}, we use the Taylor
expansion
\[
   \tilde{f}(z)=\sum_{0\le j<N}
   \frac{\tilde{f}^{(j)}(w)}{j!}\,(w-z)^j
   +O\left(\max\{(|z|+1)^{\alpha_1-N},
   (|w|+1)^{\alpha_1-N}\}|z-w|^{N}\right),
\]
for any fixed $N\ge1$. Applying this formula with $z\to z^2ue^{it}
\cos t$ and $w\to z^2$, we get
\[
\begin{split}
    \tilde{f}(z^2ue^{it}\cos t)
    &=\sum_{0\le j<N}
    \frac{\tilde{f}^{(j)}(z^2)}{j!}\,z^{2j}
    (ue^{it}\cos t-1)^j \\
    &\qquad +O\left(\bigl(|z|^2(u+1)+1\bigr)^{\alpha_1}
    |ue^{it}\cos t-1|^N\right),
\end{split}
\]
and a similar expression for $\tilde{g}$. Substituting
these expressions with $N=4$ into $I(z)$, we get
\begin{equation}\label{I(z)}
\begin{split}
    I(z)&=\frac{z^2}{4\pi}\int_{-\theta_0/2}^{\theta_0/2}
    \!\int_0^\infty \!e^{-z^2(u-1)^2\cos^2 t-z^2\sin^2 t}
    \tilde{f}(z^2ue^{it}\cos t)\tilde{g}(z^2ue^{-it}\cos t)u
    \dd u \cos^2 t\dd t\\
    &=\sum_{k,l\le3}\frac{\tilde{f}^{(k)}(z^2)
    \tilde{g}^{(l)}(z^2)}{k!l!}\cdot \frac{z^{2(1+k+l)}}{4\pi}\\
    &\quad\times\int_{-\theta_0/2}^{\theta_0/2}\! \int_0^\infty
    \!e^{-z^2(u-1)^2\cos^2 t-z^2\sin^2 t}(ue^{it}\cos t-1)^k
    (ue^{-it}\cos t-1)^lu\dd u \cos^2 t \dd t\\
    &\qquad+O\left(|z|^2\sum_{\substack{0\le k,l\le 4\\ k+l>3}}
    I_{\alpha_1+\alpha_2,k+l}\right),
\end{split}
\end{equation}
where
\[
    I_{\rho,\kappa}:=\int_{-\theta_0/2}^{\theta_0/2}
    \!\int_0^\infty e^{-z^2(u-1)^2\cos^2 t-z^2\sin^2 t}
    u\dd u (|z|^2(u+1)+1)^\rho |ue^{it}\cos t-1|^\kappa\dd t.
\]

Applying now the inequality
\[
    |ue^{it}\cos t-1|=|u\cos t-e^{-it}|
    =|u\cos t-\cos t +i\sin t|\le|u-1|+|t|,
\]
we get
\[
    I_{\rho,\kappa}=O\left(\int_{-\theta_0/2}^{\theta_0/2}
    \!\int_0^\infty\!\! e^{-\Re (z^2)(u-1)^2\cos^2 t-\Re(z^2)\sin^2 t}
    u(|z|^2(u+1)+1)^\rho (|u-1|^\kappa+|t|^\kappa )\dd u\dd t\right).
\]
Note that $\Re(z^2)\ge|z|^2\cos 2\theta$ and $\sin^2 t\ge c_1t^2$
when $|\arg (z)|\le\theta$ and $|t|\le\theta_0/2$ for some constant
$c_1>0$. Thus there exists a positive constant $c>0$ such that
\[
   \Re (z^2)(u-1)^2\cos^2 t+\Re(z^2)\sin^2 t
   \ge c (u-1)^2|z|^2+ct^2,
\]
for $|\arg (z)|\le\theta$ and $|t|\le\theta_0/2$. It then follows
that
\[
\begin{split}
    I_{\rho,\kappa}&=O\left(
    \int_{-\theta_0/2}^{\theta_0/2} \int_0^\infty
    e^{-c(u-1)^2|z|^2-c|z|^2 t^2}u (|z|^2(u+1)+1)^\rho
    (|u-1|^\kappa+|t|^\kappa )\dd u\dd t\right)\\
    &=O\Bigg(\left(\int_0^\infty e^{-c(u-1)^2|z|^2}
    (|z|^2(u+1)+1)^\rho |u-1|^\kappa u\dd u\right)
    \int_{-\theta_0/2}^{\theta_0/2}e^{-c|z|^2 t^2}\dd t\\
    &\qquad +\left(\int_0^\infty e^{-c(u-1)^2|z|^2}
    (|z|^2(u+1)+1)^\rho u\dd u\right)
    \int_{-\theta_0/2}^{\theta_0/2}
    e^{-c|z|^2 t^2}|t|^\kappa \dd t\Bigg)\\
    &=O\left((|z|^2+1)^\rho |z|^{-\kappa-2}\right).
\end{split}
\]
Substituting this bound in the error term of \eqref{I(z)}, we obtain
\[
\begin{split}
    I(z) =\sum_{0\le k,l\le3}\frac{\tilde{f}^{(k)}(z^2)
    \tilde{g}^{(l)}(z^2)}{k!l!}\,S_{k,l}
    +O\left(|z|^{2\alpha_1+2\alpha_2-4}\right),
\end{split}
\]
where
\[
\begin{split}
    S_{k,l}&=\frac{z^{2(1+k+l)}}{4\pi} \\
    &\quad\times\int_{-\theta_0/2}^{\theta_0/2} \!\int_0^\infty
    \!\!e^{-z^2(u-1)^2\cos^2 t-z^2\sin^2 t}(ue^{it}\cos t-1)^k
    (ue^{-it}\cos t-1)^lu\dd u \cos^2 t \dd t.
\end{split}
\]
We can approximate the integral $S_{k,l}$ by reversing the order of the
procedure by which we obtained it. First, making the change of
variables $u\mapsto u /(z\cos t)$, we get
\[
\begin{split}
    S_{k,l}&=\frac{1}{4\pi}\int_{-\theta_0/2}^{\theta_0/2}\!
    \int_0^\infty e^{- (u-z\cos t)^2-z^2\sin^2 t}
    (zue^{it}-z^2)^k(zue^{-it}-z^2)^lu\dd u\dd t\\
    &=\frac{e^{-z^2}}{4\pi}\int_{-\pi}^{\pi}
    \int_0^\infty e^{- 2zu\cos t}(zue^{it}-z^2)^k
    (zue^{-it}-z^2)^lue^{-u^2}\dd u\dd t\\
    &\quad+O\left((1+|z|^{k+l})
    e^{\Re^2(z)\cos^2(\theta_0/2)-\Re(z^2)}\right)\\
    &=\frac{e^{-z^2}}{4\pi} \int_0^\infty
    \left(\int_{-\pi}^{\pi}
    \bigl[e^{zue^{it}}(zue^{it}-z^2)^k\bigr]
    \bigl[e^{zue^{-it}}(zue^{-it}-z^2)^l\bigr]
    \dd t\right)ue^{-u^2}\dd u\\
    &\quad+O\left((1+|z|^{k+l})
    e^{\Re^2(z)\cos^2(\theta_0/2)-\Re(z^2)}\right)\\
    &=e^{-z^2}\sum_{n\ge0}\frac{\nu_{n,k}\nu_{n,l}}{n!}\,
    z^{2n} +O\left((1+|z|^{k+l})
    e^{\Re^2(z)\cos^2(\theta_0/2)-\Re(z^2)}\right),
\end{split}
\]
where the $\nu_{n,k}$'s are defined by
\[
    \sum_{n\ge 0}\frac{\nu_{n,k}}{n!}\,w^n
    =e^{w}(w-z^2)^k.
\]
In particular,
\[\begin{split}
    S_{1,1} &=e^{-z^2}\left(z^4+\sum_{n\ge1}
    \frac{\bigl(n-z^2\bigr)^2}{n!}\,z^{2n}\right)+
    O\left((1+|z|^{2})
    e^{\Re^2(z)\cos^2(\theta_0/2)-\Re(z^2)}\right)\\
    &=z^2+O\left((1+|z|^{2})
    e^{\Re^2(z)\cos^2(\theta_0/2)-\Re(z^2)}\right).
\end{split}
\]
Similarly,
\[
    S_{0,0}=1
    +O\left(e^{\Re^2(z)\cos^2(\theta_0/2)-\Re(z^2)}\right),
\]
and
\[
    S_{m,n}=O\left((1+|z|^{m+n})
    e^{\Re^2(z)\cos^2(\theta_0/2)-\Re(z^2)}\right) ,
\]
whenever $m\not=n$. Therefore
\begin{align*}
    \tilde{h}(z^2) &=I(z)
    +O\left((1+|z|^{\alpha_1+\alpha_2})
    e^{\Re^2(z)\cos^2(\theta_0/2)-\Re(z^2)}\right)\\
    &=\tilde{f}(z^2)\tilde{g}(z^2)+z^2\tilde{f}'(z^2)
    \tilde{g}'(z^2)
    +O\left(|z|^{2\alpha_1+2\alpha_2-4}\right).
\end{align*}
Accordingly,
\begin{align*}
    \tilde{h}(z)&=I(\sqrt{z})
    +O\left((1+|z|^{\alpha_1+\alpha_2})
    e^{\Re^2(z)\cos^2(\theta_0/2)-\Re(z^2)}\right) \\
    &=\tilde{f}(z)\tilde{g}(z)
    +z\tilde{f}'(z)\tilde{g}'(z)
    +O\left(|z|^{\alpha_1+\alpha_2-2}\right).
\end{align*}
This proves the second part and completes the proof of the
proposition.
\end{proof}

\section{Asymptotic variance of trie statistics}
\label{sec-tries}

We address in this section the asymptotic variance of general trie
statistics.

Let $X_n$ be an additive shape parameter in a random trie of size
$n$. Then $X_n$ satisfies the following distributional recurrence
\begin{align}\label{Xn-rrr}
    X_n\stackrel{d}{=}X_{I_n}+X_{n-I_n}^{*}+T_n
    \qquad (n\ge 2),
\end{align}
where $I_n=\mathrm{Binom}(n,p)$, $X_n\stackrel{d}{=}X_n^{*}$, and
$X_n,X_n^{*},I_n,T_n$ are independent. Without loss of generality,
we may assume that $X_0=0$ and $X_1=0$ (only minor modifications
needed when under nonzero initial conditions). Changing the value of
$X_0$ and $X_1$ affects only the mean but not the variance.

Consider first the moment-generating functions $M_n(y):={\mathbb
E}(e^{X_n y})$. Then, by \eqref{Xn-rrr},
\[
    M_n(y)= \mathbb{E}(e^{T_n y})\sum_{0\le k\le n}\pi_{n,k}
    M_k(y)M_{n-k}(y)\qquad(n\ge 2),
\]
where $\pi_{n,k}:= \binom{n}{k}p^kq^{n-k}$. By taking derivatives,
we obtain the recurrences for the first two moments ($\mu_n :=
\mathbb{E}(X_n)$ and $s_n := \mathbb{E}(X_n^2)$)
\begin{align}\label{Xn-mu-sigma}
\begin{split}
    \mu_n &=\sum_{0\le k\le n}\pi_{n,k}
    \left(\mu_k+\mu_{n-k}\right) +\mathbb{E}(T_n)\\
    s_n&=\sum_{0\le k\le n}
    \pi_{n,k}\left(s_k+s_{n-k}\right)
    +\mathbb{E}(T_n^2) \\ &\qquad \qquad
    +2\sum_{0\le k\le n}\pi_{n,k}
    \bigl(\mu_k\mu_{n-k}+\mathbb{E}(T_n)
    \left(\mu_k+\mu_{n-k}\right) \bigr).
\end{split}
\end{align}
Our major interest lies in the variance $\sigma_n^2 := 
\mathbb{V}(X_n)$, which also satisfies the same type of recurrence
\[
    \sigma_n^2 = \sum_{0\le k\le n}\pi_{n,k}
    \left(\sigma^2_k+\sigma^2_{n-k}\right) +\mathbb{V}(T_n)
    + \sum_{0\le k\le n} \pi_{n,k} \Delta_{n,k}^2,
\]
where $\Delta_{n,k} := \mu_k + \mu_{n-k} -\mu_n +\mathbb{E}(T_n)$.

Different approaches have been proposed to the asymptotics of
$\sigma_n^2$; these include an elementary induction approach (see
\cite{chen03a,hubalek02a}), the second-moment approach (see
\cite{kirschenhofer88b,kirschenhofer89a,kirschenhofer89b,
kirschenhofer93a}), Poissonized variance (by considering 
$\tilde{f}_2(z) - \tilde{f}_1^2(z)$) approach (see
\cite{jacquet88a,jacquet95a,PF159, regnier89a,mahmoud92a,park08a}),
(bivariate) characteristic function approach
(\cite{jacquet86a,mahmoud92a,jacquet95a}), and Schachinger's
differencing approach \cite{schachinger95b}.

The slight modification of our approach, which relies on
\eqref{Poi-Var}, from the usual Poissonized variance one turns out
to be very helpful and makes a significant difference, notably in
the resulting expressions for the periodic functions, mostly because
the cancelation is avoided (somehow incorporated in the generating
functions).

\subsection{Analytic schemes for the mean and the variance}

The tools we developed in Section~\ref{sec-hada} are useful in
establishing simple, general, analytic frameworks under which
asymptotics of the mean and the variance can be easily derived by
checking only a few sufficient conditions.

\paragraph{Asymptotics of the mean.}
Denote by $\tilde{f}_1(z)$ and $\tilde{g}_1(z)$ the Poisson
generating function of $\mathbb{E}(X_n)$ and $\mathbb{E}(T_n)$,
respectively. Then
\[
    \tilde{f}_1(z)
    =\tilde{f}_1(pz)+\tilde{f}_1(qz)+\tilde{g}_1(z),
\]
with $\tilde{f}_1(0)=\tilde{f}_1'(0)=0$.

\begin{thm}\label{mean}
Let $0<\theta<\pi/2, \alpha<1$ and $\beta\in\mathbb{R}$. If either
$\tilde{g}_1\in\JS_{\!\!\!\alpha,\beta}$ or $\tilde{g}_1(z) \in\JS$,
and $\tilde{g}_1(z)= cz+O\left(\vert z\vert^{\alpha} (\log_{+}\vert
z\vert)^{\beta}\right)$ uniformly as $|z|\to\infty$ and
$\vert\arg(z)\vert\le\theta$, where $c\in\mathbb{R}$, then
\[
    \frac{\mathbb{E}(X_n)}{n} = \frac{c}{h}\log n+ d
    +\mathscr{F}[G_1](r\log_{1/p}n)+o(1),
\]
where $d=G_1(-1)/h$ if $c=0$ and $d=h_0$ (see \eqref{h0}) if $c\ne0$,
$G_1(s) := \mathscr{M}[\tilde{g}_1;s]$ and the other notations are 
described as in Proposition~\ref{prop-at2}.
\end{thm}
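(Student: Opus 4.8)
The plan is to chain the three tools established above. From the functional equation $\tilde{f}_1(z)=\tilde{f}_1(pz)+\tilde{f}_1(qz)+\tilde{g}_1(z)$ with $\tilde{f}_1(0)=\tilde{f}_1'(0)=0$, I would first use the plain asymptotic transfer (Proposition~\ref{prop-at}) to deduce $\tilde{f}_1\in\JS$ from $\tilde{g}_1\in\JS$; then use the de-Poissonization expansion (Proposition~\ref{prop-PC}) to reduce $\mathbb{E}(X_n)$ to $\tilde{f}_1(n)$ up to a negligible error; and finally use the refined transfer (Proposition~\ref{prop-at2}) to read off the asymptotics of $\tilde{f}_1(n)/n$, Fourier fluctuations included.

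In more detail, I would first note that in either case of the hypothesis one has $\tilde{g}_1\in\JS$: when $c=0$ this is the assumption $\tilde{g}_1\in\JS_{\!\!\!\alpha,\beta}$, and when $c\ne0$ the growth $\tilde{g}_1(z)=cz+O(|z|^{\alpha}(\log_{+}|z|)^{\beta})$ with $\alpha<1$ together with the assumed exponential bound \textbf{(O)} places $\tilde{g}_1$ in $\JS_{\!\!\!1,\beta'}$ for some $\beta'\le1$. Proposition~\ref{prop-at} then gives $\tilde{f}_1\in\JS$, and inspecting the majorant step of its proof yields the sharper membership $\tilde{f}_1\in\JS_{\!\!\!1,0}$ when $c=0$ (the case $\alpha<1$ of the majorant) and $\tilde{f}_1\in\JS_{\!\!\!1,1}$ when $c\ne0$.

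Next I would apply Proposition~\ref{prop-PC} with $k=1$. Since $\tau_0\equiv1$ and $\tau_1\equiv0$, this gives
\[
    \mathbb{E}(X_n)=\tilde{f}_1(n)+O(\log n),
\]
hence, dividing by $n$,
\[
    \frac{\mathbb{E}(X_n)}{n}=\frac{\tilde{f}_1(n)}{n}+o(1).
\]
Finally, applying Proposition~\ref{prop-at2} along the real direction $z=n$ (which lies in the sector $|\arg z|\le\theta$), part~(a) when $c=0$ yields $\tilde{f}_1(n)/n=G_1(-1)/h+\mathscr{F}[G_1](r\log_{1/p}n)+o(1)$, while part~(b) when $c\ne0$ yields $\tilde{f}_1(n)/n=(c/h)\log n+h_0+\mathscr{F}[G_1](r\log_{1/p}n)+o(1)$. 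Substituting into the previous display and reading off $d$ gives the theorem.

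I do not expect a genuine obstacle: essentially all the work lives in Propositions~\ref{prop-at}, \ref{prop-PC} and \ref{prop-at2}. The only point needing care is verifying that the de-Poissonization error is $O(\log n)$ rather than something of size comparable to $n$; this uses both $\tau_1\equiv0$ and the growth index $1$ of $\tilde{f}_1$, so that after normalization by $n$ the error is genuinely $o(1)$. (Had the growth index of $\tilde{f}_1$ exceeded $1$, or had $\tau_1$ been nonzero, one would take $k\ge2$ in Proposition~\ref{prop-PC} and also estimate $\tilde{f}_1''(n)$, but here $k=1$ already suffices.)
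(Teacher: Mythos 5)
Your proposal is correct and follows essentially the same route as the paper: the paper's proof likewise combines Proposition~\ref{prop-at} (to get $\tilde{f}_1\in\JS$), Proposition~\ref{prop-PC} (to reduce $\mathbb{E}(X_n)$ to $\tilde{f}_1(n)$ up to an error that is $o(n)$), and Proposition~\ref{prop-at2} at $z=n$ to read off the asymptotics. Your additional bookkeeping of the growth indices of $\tilde{f}_1$ and the role of $\tau_1\equiv 0$ simply makes explicit what the paper leaves implicit.
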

\begin{proof}
Combining Propositions \ref{prop-PC} and \ref{prop-at}, we have
\[
    \mathbb{E}(X_n)=\sum_{0\le j<2k}
    \frac{\tilde{f}_1^{(j)}(n)}{j!}\,\tau_j(n)
    +O\left(n^{1-k}\right),
\]
for $k=0,1,\dots$. Then apply Proposition~\ref{prop-at2}.
\end{proof}

If the initial conditions are not zero, say $X_0=a$ and $X_1=b$,
then we consider $\bar{f}_1(z) := \tilde{f}_1(z)-(b-a)z-a$, leading
to the functional equation
\[
    \bar{f}_1(z) = \bar{f}_1(pz)+
    \bar{f}_1(qz) + \tilde{g}_1(z) + a(1-(1+z)e^{-z}),
\]
which results in an additional linear term for $\mathbb{E}(X_n)$
of the form
\[
    \left(b-a+\frac{a}{h}
    +a\mathscr{F}[G_0](c\log_{1/p}n)\right) n,
\]
where $G_0(s) := -(s+1)\Gamma(s)$. Such an additional term is
related to the expected size of tries; see Section \ref{sec-size}.

\paragraph{Functional equations related to the variance.}
For the variance, we begin with the second moment. Let
$\tilde{f}_2(z)$ and $\tilde{g}_2(z)$ be the Poisson generating
function of $\mathbb{E}(X_n^2)$ and $\mathbb{E}(T_n^2)$,
respectively. Then, by \eqref{Xn-mu-sigma},
\[
    \tilde{f}_2(z)=\tilde{f}_2(pz)+\tilde{f}_2(qz)
    +2\tilde{f}_1(pz)\tilde{f}_1(qz)+\tilde{g}_2(z)
    +\tilde{h}_2(z),
\]
where
\begin{align} \label{h2z}
    \tilde{h}_2(z)&=2e^{-z}\sum_{n\ge0}\mathbb{E}(T_n)
    \sum_{0\le k\le n}\pi_{n,k}
    \left(\mu_k+\mu_{n-k}\right)
    \frac{z^{n}}{n!}\\
    &=2e^{-z}\sum_{n\ge 0}\mathbb{E}(T_n)
    \mu_n\frac{z^n}{n!}
    -2e^{-z}\sum_{n\ge 0}(\mathbb{E}(T_n))^2\frac{z^n}{n!},
    \nonumber
\end{align}
the last two terms being Hadamard products.

Now, let
\begin{align*}
    \tilde{V}_X(z)&:=\tilde{f}_2(z)-
    \tilde{f}_1(z)^2-z\tilde{f}_1'(z)^2\\
    \tilde{V}_T(z)&:=\tilde{g}_2(z)-
    \tilde{g}_1(z)^2-z\tilde{g}_1'(z)^2.
\end{align*}
Then by a straightforward computation
\begin{align}\label{VXz}
    \tilde{V}_X(z)=\tilde{V}_X(pz)+\tilde{V}_X(qz)
    +\tilde{V}_T(z)+\tilde{\phi}_1(z)+\tilde{\phi}_2(z),
\end{align}
where
\begin{align}\label{s1-s2}
\begin{split}
    \tilde{\phi}_1(z)&:=\tilde{h}_2(z)-2\tilde{g}_1(z)
    \left(\tilde{f}_1(pz)+\tilde{f}_1(qz)\right)
    -2z\tilde{g}'_1(z)\left(p\tilde{f}'_1(pz)
    +q\tilde{f}'_1(qz)\right)\\
    \tilde{\phi}_2(z)&:=pqz\left(\tilde{f}'_1(pz)
    -\tilde{f}'_1(qz)\right)^2.
\end{split}
\end{align}

\paragraph{Ideas of our approach.} We sketch here the underlying
ideas used in our approach before presenting a simple analytic
scheme for the asymptotics of the variance. We assume first that
$\tilde{g}_1\in\JS$. This implies the JS-admissibility of
$\tilde{f}_1$, and thus, by Proposition~\ref{prop-PC}, we have the
asymptotic expansion for the mean
\[
    \mu_n = \sum_{0\le j<2k}
    \frac{\tilde{f}_1^{(j)}(n)}{j!}\,\tau_j(n)
    +O\left(\tilde{f}_1(n) n^{-k} \right)\qquad(k=1,2,\dots).
\]
If we also assume $\tilde{g}_2\in\JS$, then we have the same
type of expansion for $\mathbb{E}(X_n^2)$ with $\tilde{f}_1$ there
replaced by $\tilde{f}_2$. Thus (dropping error terms for
convenience of presentation)
\[
    \sigma_n^2 \sim \sum_{0\le j<2k}
    \frac{\tilde{f}_2^{(j)}(n)}{j!}\,\tau_j(n)
    - \left(\sum_{0\le j<2k}
    \frac{\tilde{f}_1^{(j)}(n)}{j!}\,\tau_j(n)\right)^2.
\]
Now substituting $\tilde{f}_2 = \tilde{V}_X + \tilde{f}_1^2
+z(\tilde{f}_1')^2$ yields \emph{formally}
\[
    \sigma_n^2 \sim \tilde{V}_X(n)
    - \frac{n}{2}\tilde{V}_X''(n) -\frac{n^2}2\tilde{f}_1''(n)^2
    + \cdots,
\]
under suitable growth conditions and a suitably chosen $k$. Thus the
asymptotics of the variance is reduced to that of $\tilde{V}(n)$ and
its derivatives. Further extensions of this approach are discussed in
detail elsewhere.

\paragraph{Asymptotics of the variance.}

We now show that the variance of $X_n$ can also be handled in a
general way by reducing the required asymptotics to essentially
checking conditions for JS-admissibility.
\begin{thm}\label{var}
Let $0<\theta<\pi/2,\alpha<1$ and $\beta\in\mathbb{R}$. Assume
$\tilde{g}_2\in\JS$ and $\tilde{V}_T(z)=O\left(|z|^{\alpha}
(\log_{+}|z|)^{\beta}\right)$ as $|z|\rightarrow\infty$ in the
sector $|\arg(z)|\le \theta$.
\begin{itemize}
\item[(a)] If $p=q=1/2$, and
$\tilde{g}_1\in\JS_{\!\!\!\alpha,\beta}$ or
$\tilde{g}_1\in\JS_{\!\!\!1,0}$, then
\begin{align}\label{symm-var}
    \frac{\mathbb{V}(X_n)}{n}
    =\frac{1}{\log 2}\sum_{k\in\mathbb{Z}}
    \Phi_1(-1+\chi_k)n^{-\chi_k} +o(1),
\end{align}
where $\chi_k=\frac{2k\pi i}{\log 2}$ and $\Phi_1(s)=
\mathscr{M}[\tilde{V}_T+\tilde{\phi}_1;s]$.

\item[(b)] Assume $p\ne q$.
\begin{itemize}

\item[(i)] If $\tilde{g}_1\in\JS_{\!\!\!\alpha,\beta}$, then
\[
    \frac{\mathbb{V}(X_n)}{n} = \frac{G(-1)}{h}+
    \mathscr{F}[G](r\log_{1/p}n) +o(1).
\]
Here $G(s)=\Phi_1(s)+\Phi_2(s)$, where $\Phi_1(s) =\mathscr{M}
[\tilde{V}_T+\tilde{\phi}_1;s]$ and $\Phi_2(s)$ is the analytic
continuation of $\mathscr{M}[\tilde{\phi}_2;s]$.

\item[(ii)] If $\tilde{g}_1\in\JS$ and $\tilde{g}_1=z +O(\vert
z\vert^{\alpha}(\log_{+}\vert z\vert)^{\beta})$ uniformly as
$|z|\to\infty$ and $\vert\arg(z)\vert\le\theta$, then
\begin{align*}
    \frac{\mathbb{V}(X_n)}{n} &=
    \frac{pq\log^2(p/q)}{h^3}\log n
    +\frac{d}{h}+\frac{pq\log^2(p/q)(p\log^2p+q\log^2 q)}{2h^4} \\
    &\qquad +\mathscr{F}[G](r\log_{1/p}n)
    +o(1),
\end{align*}
Here $G(s) = \Phi_1(s)+\Phi_2(s)$, 
where $\Phi_1(s) = \mathscr{M}[\tilde{V}_T+\tilde{\phi}_1;s]$ and 
$\Phi_2(s)$ is the meromorphic continuation of 
$\mathscr{M}[\tilde{\phi}_2;s]$, and 
$d=\Phi_1(-1)+\lim_{s\rightarrow -1}
(\Phi_2(s)+pq\log^2(p/q)/(h^2(s+1)))$.
\end{itemize}
\end{itemize}
\end{thm}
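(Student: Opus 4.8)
The plan is to push everything onto the Poisson side: first establish that $\tilde f_1,\tilde f_2$ and hence $\tilde V_X$ (see \eqref{Poi-Var}) are $\JS$-admissible, so that both the asymptotic transfer (Proposition~\ref{prop-at2}) and the de-Poissonization expansion (Proposition~\ref{prop-PC}) apply. From the hypothesis on $\tilde g_1$ and Proposition~\ref{prop-at} we get $\tilde f_1\in\JS$. The function $\tilde h_2$ of \eqref{h2z} is an integer combination of the Hadamard products $\tilde g_1\odot\tilde f_1$ and $\tilde g_1\odot\tilde g_1$, so $\tilde h_2\in\JS$ by Proposition~\ref{prop-hada}; combined with $\tilde f_1(pz)\tilde f_1(qz)\in\JS$ (Proposition~\ref{prop-closure}(v)) and $\tilde g_2\in\JS$, the inhomogeneous term of the functional equation for $\tilde f_2$ lies in $\JS$, whence $\tilde f_2\in\JS$ by Proposition~\ref{prop-at} and $\tilde V_X=\tilde f_2-\tilde f_1^2-z\tilde f_1'^2\in\JS$ as well.

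Next I would analyse the inhomogeneous term $\tilde V_T+\tilde\phi_1+\tilde\phi_2$ of \eqref{VXz}. Inserting the identities $\tilde f_1(pz)+\tilde f_1(qz)=\tilde f_1(z)-\tilde g_1(z)$ and $p\tilde f_1'(pz)+q\tilde f_1'(qz)=\tilde f_1'(z)-\tilde g_1'(z)$ (from the functional equation of $\tilde f_1$ and its derivative), together with the Hadamard expansions of Proposition~\ref{prop-hada}, all principal terms in \eqref{s1-s2} cancel and $\tilde\phi_1(z)=O(|z|^{\alpha_1}(\log_{+}|z|)^{\beta_1})$ for some $\alpha_1<1$. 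For $\tilde\phi_2(z)=pqz(\tilde f_1'(pz)-\tilde f_1'(qz))^2$: when $p=q$ it vanishes identically; when $p\ne q$ the function $u(z):=\tilde f_1'(pz)-\tilde f_1'(qz)$ satisfies, upon differentiating and subtracting the functional equation of $\tilde f_1$, the \emph{weighted} equation $u(z)=pu(pz)+qu(qz)+(\tilde g_1'(pz)-\tilde g_1'(qz))$, whose Mellin kernel $1-p^{1-s}-q^{1-s}$ has all its zeros in $\Re(s)\ge0$, and on $\Re(s)=0$ the residues are proportional to $p^{-\chi_k}-q^{-\chi_k}=0$ for $k\ne0$; hence $u(z)=\tfrac{c\log(p/q)}{h}+O(|z|^{-\delta})$ for some $\delta>0$, so $\tilde\phi_2(z)=\tfrac{pq(c\log(p/q))^2}{h^2}\,z+O(|z|^{\alpha'})$ with $\alpha'<1$ (here $c=0$ in case (b)(i) and $c=1$ in case (b)(ii), as in Theorem~\ref{mean}). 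Thus in cases (a) and (b)(i) the inhomogeneous term of \eqref{VXz} is $O(|z|^{\alpha''}(\log_{+}|z|)^{\beta''})$ with $\alpha''<1$, and Proposition~\ref{prop-at2}(a) yields $\tilde V_X(z)/z=G(-1)/h+\mathscr{F}[G](r\log_{1/p}z)+o(1)$ with $G=\Phi_1+\Phi_2$; in case (b)(ii) it equals $\tfrac{pq\log^2(p/q)}{h^2}z+O(|z|^{\alpha''}(\log_{+}|z|)^{\beta''})$, and Proposition~\ref{prop-at2}(b) produces the $\log n$-coefficient $pq\log^2(p/q)/h^3$ together with the constant $h_0$ of \eqref{h0} specialised to this right-hand side, which is precisely $d/h+pq\log^2(p/q)(p\log^2p+q\log^2q)/(2h^4)$. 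In case (a) one rewrites $G(-1)/h+\mathscr{F}[\Phi_1](\cdot)$ (with $G=\Phi_1$ since $\tilde\phi_2\equiv0$) in the compact form \eqref{symm-var} using $h=\log2$ and $r=1$.

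Finally, de-Poissonize. Proposition~\ref{prop-PC} applied with $2k=4$ to $\mathbb{E}(X_n^2)$ via $\tilde f_2\in\JS$ and to $\mathbb{E}(X_n)$ via $\tilde f_1\in\JS$, combined through $\tilde f_2=\tilde V_X+\tilde f_1^2+z\tilde f_1'^2$ and the computation of \cite{hwang10a}, gives $\mathbb{V}(X_n)=\tilde V_X(n)-\tfrac n2\tilde V_X''(n)-\tfrac{n^2}2\tilde f_1''(n)^2+o(n)$; by the asymptotics of $\tilde V_X$ and $\tilde f_1$ just obtained and Cauchy's estimate, $n\tilde V_X''(n)$ and $n^2\tilde f_1''(n)^2$ are $O(1)$, so $\mathbb{V}(X_n)=\tilde V_X(n)+o(n)$, and dividing by $n$ and inserting $\tilde V_X(z)/z$ completes all three cases. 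The main obstacle is exactly this passage from the Poisson world to $\mathbb{V}(X_n)$: since $\mathbb{E}(X_n)^2\asymp n^2\log^2n$ is far larger than $\mathbb{V}(X_n)$, de-Poissonizing the second moment and the squared mean separately and then subtracting leaves an error $\asymp n^{2-k}\log^2n$, never $o(n)$ for any usable $k$; the whole point of working with $\tilde V_X$ rather than $\tilde f_2-\tilde f_1^2$ is that the decisive cancelation is then already built into the generating function, so $\mathbb{V}(X_n)$ reduces to $\tilde V_X(n)$ up to an $O(1)$ term. The exact combinatorial identity behind this reduction (supplied by \cite{hwang10a}) and, in case (b)(ii), the power-saving estimate for $u(z)$ via the nonvanishing of $1-p^{1-s}-q^{1-s}$ near $\Re(s)=-1$ are the two technical crux points.
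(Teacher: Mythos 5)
Most of your plan coincides with the paper's actual proof: the admissibility bookkeeping ($\tilde f_1\in\JS$ by Proposition~\ref{prop-at}, $\tilde h_2\in\JS$ by Proposition~\ref{prop-hada}), the cancelation showing $\tilde\phi_1(z)=O(|z|^{\alpha-1}(\log_+|z|)^{\beta})$ (resp.\ $O(\log_+|z|)$) by combining the Hadamard expansion \eqref{hfg-hada} with the functional equation for $\tilde f_1$, the Mellin transfer in the style of Proposition~\ref{prop-at2}, and the final reduction $\mathbb{V}(X_n)=\tilde V_X(n)-\tfrac n2\tilde V_X''(n)-\tfrac{n^2}2\tilde f_1''(n)^2+o(n)$ are exactly the paper's steps (your ``$O(1)$'' for the correction terms should be $O(\log^2 n)$ in case (b)(ii), but that is still $o(n)$, so harmless). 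The genuinely different ingredient is your treatment of $\tilde\phi_2$ through the weighted equation $u(z)=pu(pz)+qu(qz)+\tilde g_1'(pz)-\tilde g_1'(qz)$ for $u(z)=\tilde f_1'(pz)-\tilde f_1'(qz)$, and that is where there is a gap.

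Your key claim $u(z)=\tfrac{c\log(p/q)}{h}+O(|z|^{-\delta})$ requires pushing the inverse Mellin contour into a zero-free strip $0<\Re(s)<\delta$ of $1-p^{1-s}-q^{1-s}$. Such a strip exists when $\frac{\log p}{\log q}\in\mathbb{Q}$ (the zeros occupy finitely many vertical lines, so there is a genuine gap to the right of $\Re(s)=0$), but when $\frac{\log p}{\log q}\notin\mathbb{Q}$ the zeros of $1-p^{-w}-q^{-w}$ have real parts accumulating at $-1$ from the right (see \cite{PF055,schachinger00a,PF208}), equivalently the zeros of your kernel accumulate at $\Re(s)=0$; no power saving is available there and one can only assert $u(z)=\tfrac{c\log(p/q)}{h}+o(1)$, hence $\tilde\phi_2(z)=\tfrac{pqc^2\log^2(p/q)}{h^2}z+o(|z|)$. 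With only an $o(|z|)$ remainder the hypotheses of Proposition~\ref{prop-at2} (which demand $O(|z|^{\alpha}(\log_+|z|)^{\beta})$, $\alpha<1$) are not met, so your invocation of it in cases (b)(i) and (b)(ii) is not licensed in the irrational case, even though the theorem (including the interpretation of $\Phi_2(-1)$ and of $d$ as values or limits of the continuation of $\mathscr{M}[\tilde\phi_2;s]$) is asserted there as well. The paper closes precisely this hole by a different device: it uses $\tilde\phi_2(z)=o(|z|)$ only to place $\mathscr{M}[\tilde\phi_2;s]$ in the strip $\langle-3,-1\rangle$, then represents $\Phi_2$ by the explicit Mellin convolution \eqref{Phi-2-s} and continues it meromorphically across $\Re(s)=-1$ by residue summation, the factor $p^{-w}-q^{-w}$ cancelling the critical zeros of $1-p^{1-w}-q^{1-w}$; after that the transfer argument of Proposition~\ref{prop-at2} can be repeated with the contour moved to $\Re(s)=-1$. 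To repair your write-up you must either import this continuation of $\Phi_2$ (restricting your power-saving step to the rational case, where it is correct) or prove a strengthened transfer lemma accepting $o(|z|)$ remainders together with a separately established continuation of the transform; as stated, the step ``hence $u(z)=\tfrac{c\log(p/q)}{h}+O(|z|^{-\delta})$'' is false in general.
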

\pf Since $\tilde{V}_T$ is assumed to be small (less than linear),
we first show that, under the assumptions of the theorem, both
$\tilde{\phi}_1$ and $\tilde{\phi}_2$ are also small; see \eqref{VXz}.

If $\tilde{g}_1\in\JS_{\!\!\!\alpha,\beta}$, then $\tilde{f}_1
\in\JS_{\!\!\!1,0}$ by Theorem \ref{mean}. These imply, by
Proposition \ref{prop-hada}, that $\tilde{h}_2\in\JS$ and
\begin{align*}
    \tilde{h}_2(z)&=2\tilde{g}_1(z)\tilde{f}_1(z)
    +2z\tilde{g}'_1(z)\tilde{f}'_1(z)
    -2\tilde{g}_1(z)^2-2z\tilde{g}'_1(z)^2 + O\left(\vert
    z\vert^{\alpha-1}
    (\log_{+}\vert z\vert)^{\beta}\right)\\
    &=2\tilde{g}_1(z)\left(\tilde{f}_1(pz)
    +\tilde{f}_1(qz)\right)+2z\tilde{g}'_1(z)\left(p\tilde{f}'_1(pz)
    +q\tilde{f}'_1(qz)\right)\\ &\qquad\qquad +
    O\left(\vert z\vert^{\alpha-1}
    \left(\log_{+}\vert z\vert\right)^{\beta}\right),
\end{align*}
uniformly as $|z|\to\infty$ and $\vert\arg(z)\vert\le\theta$. It
follows, by \eqref{s1-s2}, that $\tilde{\phi}_1(z)=O\left(\vert
z\vert^{\alpha-1}\left(\log_{+}\vert z\vert\right)^{\beta}\right)$.
Similarly, if $\tilde{g}_1\in\JS_{\!\!\!1,0}$, then
$\tilde{h}_2\in\JS$ and $\tilde{\phi}_1(z)=O\left(\log_{+}\vert
z\vert\right)$ as $|z|\rightarrow\infty$ in the sector
$\vert\arg(z)\vert\le\theta$.

Without loss of generality, we may assume that all generating 
functions $f$ involved here have the property that $f(0)=f'(0)=0$.

Consider first $\tilde{V}_T+\tilde{\phi}_1$. By assumption and by
the preceding analysis, we see that $\mathscr{M}[\tilde{V}_T
+\tilde{\phi}_1;s]$ exists in the strip $\langle
-2,-\alpha-\ve\rangle$, with $\ve>0$ arbitrarily small. Thus we
argue as in Proposition \ref{prop-at2} and obtain
\eqref{symm-var}. Note that $\theta>0$ is crucial here.

We now turn to $\tilde{\phi}_2$, which is zero when $p= q$. So
assume now $p\ne q$. If $\tilde{g}_1\in\JS_{\!\!\!\alpha,\beta}$,
then its Mellin transform $G_1$ exists in the strip $\langle
-2,-\alpha-\ve \rangle$ and, by applying the Exponential Smallness
Lemma, is exponentially small at $c\pm i\infty$. Thus from the
integral representation
\[
    \tilde{f}_1'(pz)- \tilde{f}_1'(qz)
    = -\frac1{2\pi i}\int_{(-1-\ve)}
    \frac{wG_1(w)(p^{-w-1}-q^{-w-1})
    }{1-p^{-w}-q^{-w}}\,z^{-w-1}\dd w,
\]
it follows that $\tilde{f}_1'(pz)- \tilde{f}_1'(qz) = o(1)$ and
consequently by \eqref{s1-s2} $\tilde{\phi}_2(z)=o(\vert z\vert)$ as
$|z|\to\infty$ and $\vert\arg(z)\vert\le\theta$. Thus the Mellin
transform $\Phi_2(s)$ of $\tilde{\phi}_2(z)$ exists in the strip
$\langle-3,-1\rangle$ and
\begin{align}\label{Phi-2-s}
\begin{split}	
    \Phi_2(s) &= \frac{pq}{2\pi i}\int_{(-1/2)}
    \frac{(p^{-w}-q^{-w})(p^{w-1-s}-q^{w-1-s})}
    {(1-p^{1-w}-q^{1-w})(1-p^{w-s}-q^{w-s})}\\
    &\hspace*{3cm} \times (w-1)G_1(w-1)(s-w)G_1(s-w)\dd w.
\end{split}
\end{align}
Note that
\[
    \frac{p^{-w}-q^{-w}}{1-p^{1-w}-q^{1-w}} =
    \frac{-(1-p^{-w})+(1-q^{-w})}
    {p(1-p^{-w})+q(1-q^{-w})}.
\]
If $\log p/\log q = r/\ell \in\mathbb{Q}$, where $(r,\ell)=1$
are positive integers, then any zero of the form $2rk\pi i/\log p$
of the denominator is also a zero of the numerator. Thus the
integration path can be moved to the imaginary axis.

By summing over all residues of poles with real parts
less than $-\alpha$ (see \cite{PF055} for a detailed study), we
see that $\Phi_2(s)$ can be extended to a
meromorphic function beyond the line $\Re(s)=-1$ which is analytic
on $\Re(s)=-1$. Consequently, the asymptotic estimate in case
(\emph{b})-(\emph{i}) follows as in Proposition \ref{prop-at2}.

The analysis for the last part (\emph{ii}) is  similar with the only
difference that now $\tilde{\phi}_2(z)=pq\log^2(p/q)z/h^2+o(\vert
z\vert)$ uniformly as $|z|\to\infty$ and $\vert\arg(z)\vert
\le\theta$. Hence, one can again extend $\mathscr{M}
[\tilde{\phi}_2; s]$ to a meromorphic function beyond the line
$\Re(s)=-1$, but there is a simple pole on $\Re(s)=-1$ at $s=-1$
with the singular expansion $\mathscr{M}[\tilde{\phi}_2; s]=
-pq\log^2(p/q)/(h^2(s+1))+d+\cdots$. Thus similar arguments used in
Proposition \ref{prop-at2} apply. This completes the proof. \qed

\paragraph{Calculation of the Fourier coefficients
$\Phi_2(-1+\chi_k)$.} We outline here an approach by residue 
calculus to simplify the Fourier coefficients $\Phi_2(-1+\chi_k)$,
which will be applied several times later. 

We begin with the integral representation \eqref{Phi-2-s}, which we 
first shift to the imaginary axis. Then, we use the following 
decomposition
\begin{align*}
    \Phi_2(-1+\chi_k) &= \frac{1}{2\pi i}\int_{(0)^+}
    \left(\frac1{1-p^{1-w}-q^{1-w}}+ \frac{p^{1+w}-q^{1+w}}
    {1-p^{1+w}-q^{1+w}}\right)\\
    &\hspace*{3cm} \times (w-1)G_1(w-1)(-1+\chi_k-w)
    G_1(-1+\chi_k-w)\dd w \\
    &=: J_0+J,
\end{align*}
where the integration contour $\int_{(0)^+}$ is the imaginary axis but
with a sufficiently small indentation to the right of each zero
of the equation $1-p^{1-it}-q^{1-it}=0$ for real $t$ (only one
when $\frac{\log p}{\log q}\ne\mathbb{Q}$, and an infinity number
of equally-spaced ones otherwise).

By the change of variables $w\mapsto \chi_k-w$ and then by moving
the line of integration to the right, we have
\begin{align*}
    J_0 &= \frac1{2\pi i} \int_{(0)^-}
    \frac{(w-1)G_1(w-1)(-1+\chi_k-w)
    G_1(-1+\chi_k-w)}{1-p^{1+w}-q^{1+w}} \dd w\\
    &= -\frac1h\sum_{j\in\mathbb{Z}}
    (\chi_j-1)G_1(\chi_j-1)(-1+\chi_{k-j})
    G_1(-1+\chi_{k-j})\\
     &\qquad +  \frac1{2\pi i} \int_{(0)^+}
    \frac{(w-1)G_1(w-1)(-1+\chi_k-w)
    G_1(-1+\chi_k-w)}{1-p^{1+w}-q^{1+w}} \dd w,
\end{align*}
where the integration contour $\int_{(0)^-}= -\int_{(0)^+}$. The last
integral equals
\[
    \frac1{2\pi i} \int_{(0)^+}
    (w-1)G_1(w-1)(-1+\chi_k-w)
    G_1(-1+\chi_k-w) \dd w + J.
\]
Note that
\[
    \frac1{2\pi i} \int_{(0)^+}
    (w-1)G_1(w-1)(-1+\chi_k-w)
    G_1(-1+\chi_k-w) \dd w = \int_0^\infty
    \tilde{g}_1'(t)^2 t^{-1+\chi_k} \dd t.
\]
Combining these relations, we obtain
\begin{align}\label{Phi-1-chi}
\begin{split}
    \Phi_2(-1+\chi_k) &= 2J-\frac1h \sum_{j\in\mathbb{Z}}
    (\chi_j-1)G_1(\chi_j-1)(-1+\chi_{k-j})
    G_1(-1+\chi_{k-j}) \\
    &\qquad + \int_0^\infty
    \tilde{g}_1'(t)^2 t^{-1+\chi_k} \dd t .
\end{split}	
\end{align}	
To further simplify the integral $J$, we write
\[
    \tilde{g}_1(z) = \sum_{j\ge2}\frac{\tilde{b}_j}{j!}\, z^j.
\]
Then it follows from the Direct Mapping Theorem (see
\cite{PF120}) of Mellin transform that $G_1(s)$ can be extended to
a meromorphic function to the left of $\Re(s)=-2$ with simple poles
at $s=-j$, the residue there being equal to $\tilde{b}_j/j!$.

If we assume that $-(s-1)G_1(s-1)$ has no singularity to the right
of imaginary axis, then we obtain
\begin{align} \label{J}
    J = \sum_{j\ge2}\frac{\tilde{b}_j(p^j+q^j)}{(j-1)!
    (1-p^j-q^j)}(j-2+\chi_k)G_1(j-2+\chi_k).
\end{align}
This and \eqref{Phi-1-chi} will be useful later.

This procedure is very effective in many applications having
linear variance (namely, the situation of Theorem \ref{var}
(\emph{b})-(\emph{i})); similar but slightly more involved arguments 
can be used in more general situations such as $n\log n$-variance. 

\section{Applications}
\label{sec-apps}

We apply or slightly modify the schemes developed in the previous
sections to a few standard examples in the literature for which new
results are proposed for the asymptotics of the variance.

\subsection{Size of random tries}
\label{sec-size}

The size of a trie is defined to be the number of internal nodes
used, which becomes a random variable when the input sequence is
random. For example, eight internal nodes are used in the trie in
Figure~\ref{fg-trie}. Under our Bernoulli model, we see that the
size $X_n$ satisfies \eqref{Xn-rr} with $X_0=X_1=0$ and $T_n=1$ for
$n\ge2$, where $n$ denotes the total number of input keys (or
external nodes). Under different guises and different initial
conditions, this is the most studied random variable defined on
tries or related structures in the literature, most of them dealing
with the expected value and very few of them with the variance. See,
for example, \cite{bourdon01a,PF161,devroye05a,janson12a,
massey81a,myoupo03a,shiau05a} and the references therein for the
mean, and \cite{jacquet88a,janssen00a,kaplan85a,kirschenhofer88b,
kirschenhofer91a,mahmoud92a,regnier89a} for the variance.

Since $T_n=1$, we have
\[
    \tilde{g}_1(z)=\tilde{g}_2(z)=1-(1+z)e^{-z}.
\]
From Proposition \ref{prop-closure}, we see that both functions
$\tilde{g}_1,\tilde{g}_2\in \JS_{\!\!\!0,0}$. Also we have
\begin{align} \label{VT-size}
    \tilde{V}_T(z) := \tilde{g}_2(z)-\tilde{g}_1^2(z)
    -z\tilde{g}_1'(z)^2
    =e^{-z}(1+z-(1+2z+z^2+z^3)e^{-z}),
\end{align}
and (see \eqref{s1-s2})
\begin{align} \label{s1-size}
    \tilde{\phi}_1(z)= 2e^{-z}\left((1+z)
    \bigl(\tilde{f}_1(pz)+\tilde{f}_1(qz)\bigr)
    -z^2\bigl(p\tilde{f}'_1(pz)
    +q\tilde{f}'_1(qz)\bigr)\right).
\end{align}
Both functions are exponentially small for large $|z|$ with $\Re(z)>0$.

The application of both Theorems~\ref{mean} and \ref{var} is
straightforward. Since
\[
    G_1(s)=\mathscr{M}[\tilde{g}_1;s]
    =-(s+1)\Gamma(s),
\]
we thus obtain, when $X_0=X_1=0$,
\[
    \frac{\mathbb{E}(X_n)}{n}=
    \frac{1}{h}+\mathscr{F}[G_1](r\log_{1/p}n)
    +o(1),
\]
a well-known result. When $X_0=a$ and $X_1=b$, then a direct
modification of the same argument gives
\[
    \frac{\mathbb{E}(X_n)}{n}=
    b-a+(a+1)\left(\frac{1}{h}
    +\mathscr{F}[G_1](r\log_{1/p}n)\right)+o(1).
\]

As regards the variance, the functions involved become more
complicated. We state our results by distinguishing between
symmetric case $p=1/2$ and asymmetric case $p\not=q$.

\begin{thm}[Symmetric case: $p=1/2$] \label{thm-size-sym}
The variance of the size of random symmetric tries satisfies 
asymptotically ($\chi_k:=2k\pi i/\log 2$)
\[
    \frac{\mathbb{V}(X_n)}{n} 
    = \frac{1}{\log 2}\sum_{k\in{\mathbb Z}}G(-1+\chi_k)
    n^{-\chi_k}+o(1),
\]
where the mean value of the periodic function is given by
\begin{align}\label{v-ss}
\begin{split}
    \frac{G(-1)}{\log 2}=\frac{1}{\log 2}
    \left(\frac{1}{4}+2\sum_{j\ge 1}
    \frac{(-1)^j(j-1)}{2^j-1}\right)\approx
    0.845858623076001\cdots,
\end{split}
\end{align}
and for $k\not=0$
\begin{align}\label{v-ssk}
\begin{split}
    G(-1+\chi_k)=-\frac{\chi_k\Gamma(-1+\chi_k)(1+\chi_k)^2}{4}
    +2\sum_{j\geq 1}\frac{(-1)^jj(j(j+\chi_k)-1)
    \Gamma(j+\chi_k)}{(j+1)!(2^j-1)}.
\end{split}
\end{align}
\end{thm}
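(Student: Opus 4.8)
The plan is to read off the statement from Theorem~\ref{var}(a) and then make the Mellin transform occurring there completely explicit. First I would check the hypotheses: since $T_n=1$ for $n\ge2$ we have $\tilde g_1(z)=\tilde g_2(z)=1-(1+z)e^{-z}\in\JS_{\!\!\!0,0}$ by Proposition~\ref{prop-closure}, while $\tilde V_T$ and $\tilde\phi_1$, by \eqref{VT-size} and \eqref{s1-size}, are exponentially small at infinity in a sector about the positive axis; so both assumptions of Theorem~\ref{var}(a) hold with $\alpha=\beta=0$. Because $p=q=1/2$ kills $\tilde\phi_2$ in \eqref{s1-s2}, that theorem gives precisely $\mathbb V(X_n)/n=(\log 2)^{-1}\sum_{k}G(-1+\chi_k)n^{-\chi_k}+o(1)$ with $G(s)=\Phi_1(s)=\mathscr M[\tilde V_T+\tilde\phi_1;s]$, a function analytic for $\Re(s)>-2$ (both summands being $O(|z|^2)$ near $0$ and exponentially small at infinity). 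The whole task is therefore to evaluate $\mathscr M[\tilde V_T;s]$ and $\mathscr M[\tilde\phi_1;s]$ at $s=-1+\chi_k$.

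For $\mathscr M[\tilde V_T;s]$ I would expand \eqref{VT-size} and integrate term by term using $\mathscr M[z^m e^{-az};s]=a^{-s-m}\Gamma(m+s)$, and then apply $\Gamma(s)+\Gamma(s+1)=(s+1)\Gamma(s)$ to remove the apparent pole at $s=-1$; this yields the closed form $(1-2^{-s})(s+1)\Gamma(s)-2^{-s-2}\Gamma(s+2)-2^{-s-3}\Gamma(s+3)$. Evaluating at $s=-1+\chi_k$ and using $2^{\chi_k}=1$ together with $\Gamma(1+\chi_k)=\chi_k(\chi_k-1)\Gamma(-1+\chi_k)$ and $\Gamma(2+\chi_k)=(1+\chi_k)\chi_k(\chi_k-1)\Gamma(-1+\chi_k)$, everything collapses to $-\tfrac14\chi_k\Gamma(-1+\chi_k)(1+\chi_k)^2$, which is the leading term in \eqref{v-ssk}; for $k=0$ this is to be read as the limit $\tfrac14$ (since $\chi_k\Gamma(-1+\chi_k)\to-1$), which is the constant $\tfrac14$ in \eqref{v-ss}.

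The substantial part is $\mathscr M[\tilde\phi_1;s]$. From \eqref{s1-size} with $p=q=1/2$, $\tilde\phi_1(z)=2e^{-z}\bigl(2(1+z)\tilde f_1(z/2)-z^2\tilde f_1'(z/2)\bigr)$, and I would feed in $\mathscr M[\tilde f_1;s]=-(s+1)\Gamma(s)/(1-2^{s+1})$ (from the trie functional equation and $\mathscr M[\tilde g_1;s]=-(s+1)\Gamma(s)$), together with $\mathscr M[\tilde f_1';s]=s\Gamma(s)/(1-2^{s})$ for the derivative term. Writing each of the three summands of $\tilde\phi_1$ as a Mellin convolution $\tfrac1{2\pi i}\int\Gamma(w)\,\mathscr M[\,\cdot\,;s-w]\dd w$ — with its own vertical contour, since $e^{-z}\tilde f_1(z/2)$, $z e^{-z}\tilde f_1(z/2)$ and $z^2 e^{-z}\tilde f_1'(z/2)$ have Mellin transforms in three different strips — I would shift the contours and collect residues. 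The poles of the $\Gamma$-factor inside $\mathscr M[\tilde f_1;\cdot]$ at the negative integers produce the series $\sum_{j\ge1}$: the denominator $1-2^{s+1}$, evaluated at integer arguments, produces the $2^{j}-1$; the $\Gamma$-residues produce $(-1)^j/(j+1)!$ and $\Gamma(j+\chi_k)$; and the two extra powers of $z$ in the last two summands turn the linear $j$ contributed by the first summand into the cubic $j\bigl(j(j+\chi_k)-1\bigr)$ of \eqref{v-ssk}. Adding $\mathscr M[\tilde V_T;-1+\chi_k]$ and $\mathscr M[\tilde\phi_1;-1+\chi_k]$ then gives \eqref{v-ssk}, and specializing to $\chi_k=0$ (using $j(j-1)(j+1)(j-1)!/(j+1)!=j-1$) gives \eqref{v-ss}, whose decimal value follows by summing the geometrically convergent series.

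I expect the residue bookkeeping of the last step to be the main obstacle, for two reasons. First, besides the integer poles of the $\Gamma$-kernels, each convolution integrand also has poles at the zeros of $1-2^{\sigma+1}$ (from the denominator of $\mathscr M[\tilde f_1;\cdot]$); upon evaluation at $s=-1+\chi_k$ these sit on, or on the edge of, the shifted contours and their residues are \emph{products} of two $\Gamma$-values — the unwanted ``periodic $\times$ periodic'' contributions. One must show that, summed over the three summands of $\tilde\phi_1$, they cancel; Theorem~\ref{var}(a) guarantees that $\Phi_1(-1+\chi_k)$ is a genuine Fourier coefficient of a bounded periodic function, so the cancellation has to happen, but verifying it requires matching the pieces explicitly (the identities $\Gamma(1+\chi_{k-j})=\chi_{k-j}\Gamma(\chi_{k-j})$, etc., do the matching). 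Second, at $w=0$ the $\Gamma$-kernel pole coincides with a pole of $\mathscr M[\tilde f_1;s-w]$ when $s=-1+\chi_k$, so that residue is that of a double pole and must be extracted from a derivative; tracking this term, along with the $s\to-1$ limit in $\mathscr M[\tilde V_T;s]$ for the $k=0$ coefficient, is where care is needed. An alternative that sidesteps the spurious poles is to shift the contours to the right instead, collecting only the poles of the $\Gamma$-factor in $\mathscr M[\tilde f_1;\sigma]$ at $\sigma=0,-2,-3,\dots$ (whose $2^{\ell}-2$ denominators reorganize neatly into the $2^{j}-1$ of \eqref{v-ssk}), at the price of having to bound the arc at infinity.
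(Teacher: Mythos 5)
Your proposal is correct and follows essentially the same route as the paper: verify the hypotheses of Theorem~\ref{var}(a) (with $\tilde{\phi}_2\equiv0$ since $p=q$), compute $\mathscr{M}[\tilde{V}_T;s]=(s+1)\Gamma(s)\bigl(1-\tfrac{s^2+4s+8}{2^{s+3}}\bigr)$ in closed form, write $\mathscr{M}[\tilde{\phi}_1;s]$ as a Mellin convolution built from $\mathscr{M}[\tilde{f}_1;s]=-(s+1)\Gamma(s)/(1-2^{s+1})$ and its derivative analogue, and sum residues at the Gamma poles to obtain the series in \eqref{v-ssk}, then let $\chi_k\to0$ for \eqref{v-ss}. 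The ``main obstacle'' you anticipate is, however, illusory: in the paper's single $w$-integral for $Y_1(s)$ the contour sits at $\Re(w)=-\tfrac32$ and is shifted to the \emph{left}, whereas all zeros of $1-2^{w+1}$ lie on $\Re(w)=-1$ and the poles of the complementary factor $\Gamma(s-w)$ lie in $\Re(w)\ge-1$, so neither the ``periodic $\times$ periodic'' residues nor a double pole at the kernel's $w=0$ is ever crossed — no cancellation among the three summands needs to be verified, and this is in substance the right-shift alternative you sketch in your last sentence (the shifted integral vanishes because of the geometric factor $2^{w}$, the residues decaying like $2^{-j}$ times polynomial growth).
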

The numerical value in \eqref{v-ss} coincides with that given in
\cite{regnier89a}, where they derived the alternative expression
\begin{equation}\label{re}
    \frac{1}{2\log 2}-\frac{1}{\log^2 2}
    -\frac{2}{\log 2}\sum_{j\ge 1}\frac{(-1)^j}{2^j-1}-
    \frac{4\pi^2}{\log^3 2}\sum_{j\ge 1}
    \frac{j}{\sinh\frac{2j\pi^2}{\log 2}};
\end{equation}
see also \cite{kirschenhofer91a}. This expression can 
also be derived by the simplification procedure for deriving 
\eqref{Phi-1-chi} (see also Theorem \ref{thm-size-asym} below). 
Equating the above two expressions yields the identity
\begin{equation}\label{id}
    \sum_{j\ge1}\frac{(-1)^jj}{2^j-1}
    =\frac{1}{8}-\frac{1}{2\log 2}
    -\frac{2\pi^2}{\log^2 2}\sum_{j\ge
    1}\frac{j}{\sinh\frac{2j\pi^2}{\log 2}},
\end{equation}
which can be proved directly by the residue calculus similar to 
that used in deriving \eqref{Phi-1-chi}. Of special mention here
is that the series on the right-hand side is less than 
$1.1\times 10^{-10}$, meaning that the first two terms on the 
right-hand side already provide a very accurate approximation
to the series on the left-hand side. 
A third expression with the same numerical value is given in
\cite[Sec.\ 5.4]{mahmoud92a}
\[
    \frac1{\log 2}\left(\frac12+2\sum_{j\ge1}
    \frac1{2^j+1}\right)-\frac1{\log^22}-
    \frac{4\pi^2}{\log^3 2}\sum_{j\ge 1}
    \frac{j}{\sinh\frac{2j\pi^2}{\log 2}},
\]
which can be obtained from \eqref{re} by the identity
\[
    \sum_{j\geq 1}\frac{1}{2^j+1}
    =\sum_{j\geq 1}\frac{(-1)^{j-1}}{2^j-1}.
\]
Regarding the oscillating terms, Kirschenhofer
and Prodinger derived in \cite{kirschenhofer91a} (with terms slightly
simplified and with minor corrections)
\begin{align*}
    G(-1+\chi_k)&= -3\chi_k\Gamma(-1+\chi_k)
    -(1-\chi_k)(2-\chi_k)\Gamma(\chi_k)
    \left(\frac{1}{2}-\sum_{j\ge 1}\frac{(\chi_k+j)
    \binom{-\chi_k}{j-1}}{(j+1)(2^j-1)}\right)\\
    &\quad-\frac{\chi_k\Gamma(1+\chi_k)}{\log 2}
    -2\Gamma(1+\chi_k)\left(\frac{5-\chi_k}{4(1-\chi_k)}
    -\sum_{j\ge 1}\frac{(\chi_k+j+1)
    \binom{-\chi_k-1}{j-1}}{(j+1)(2^j-1)}\right)\\
    &\quad +\frac{1}{\log 2}
    \sum_{\substack{j+m=k\\j,m\ne 0}}
    \chi_j\Gamma(-1+\chi_j)\chi_m\Gamma(1+\chi_m),
\end{align*}
which is to be compared with our expression \eqref{v-ssk}.
Numerically, the amplitude of the oscillating part 
is bounded above by $\sum_{k\not=0}|G(-1+\chi_k)|/\log 2\le
1.7\times 10^{-6}$; see Figure~\ref{fg-var-size-1}. 

\begin{figure}[!ht]
\begin{center}
\includegraphics[width=5.2cm,height=3.6cm]{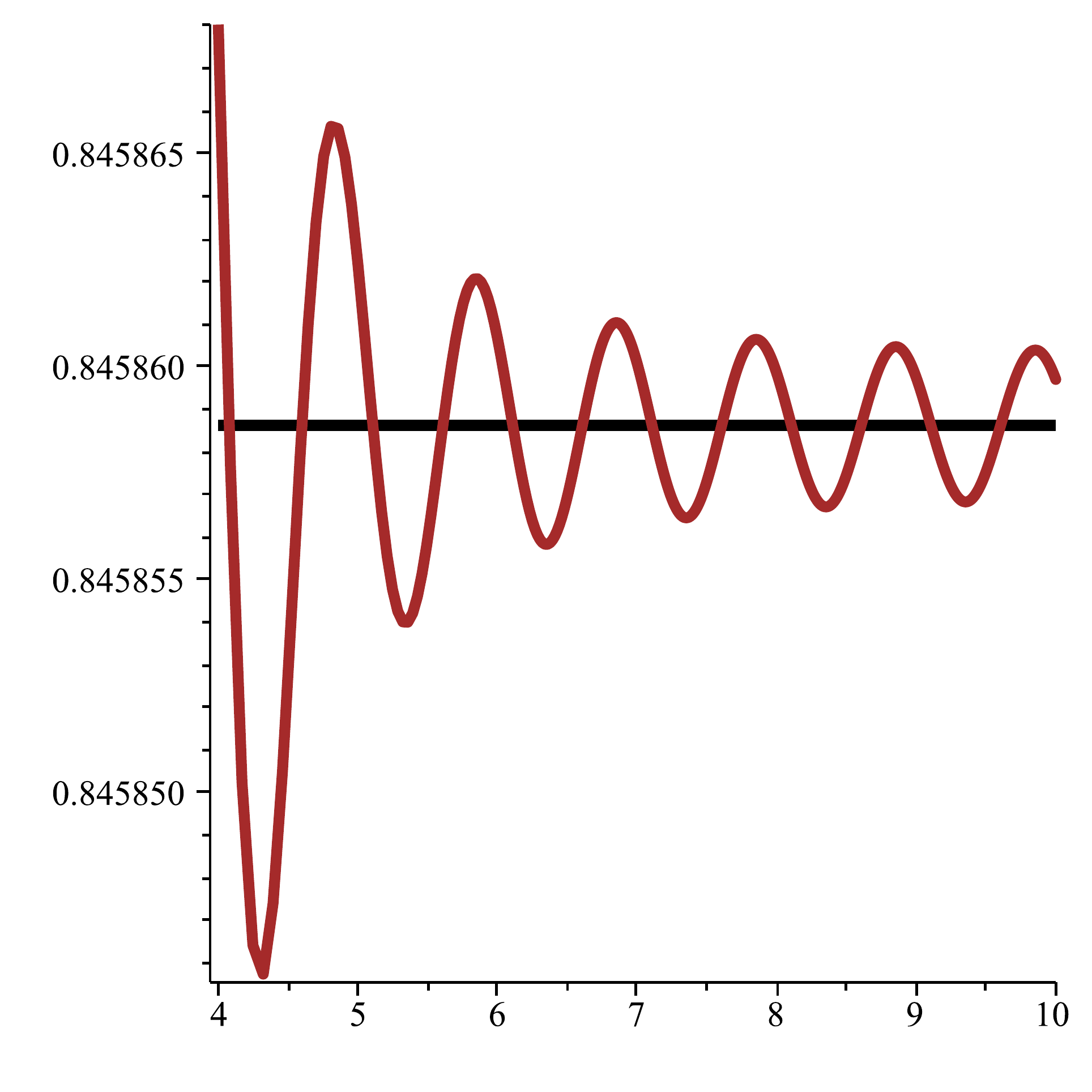}
\quad
\includegraphics[width=5cm,height=3.4cm]{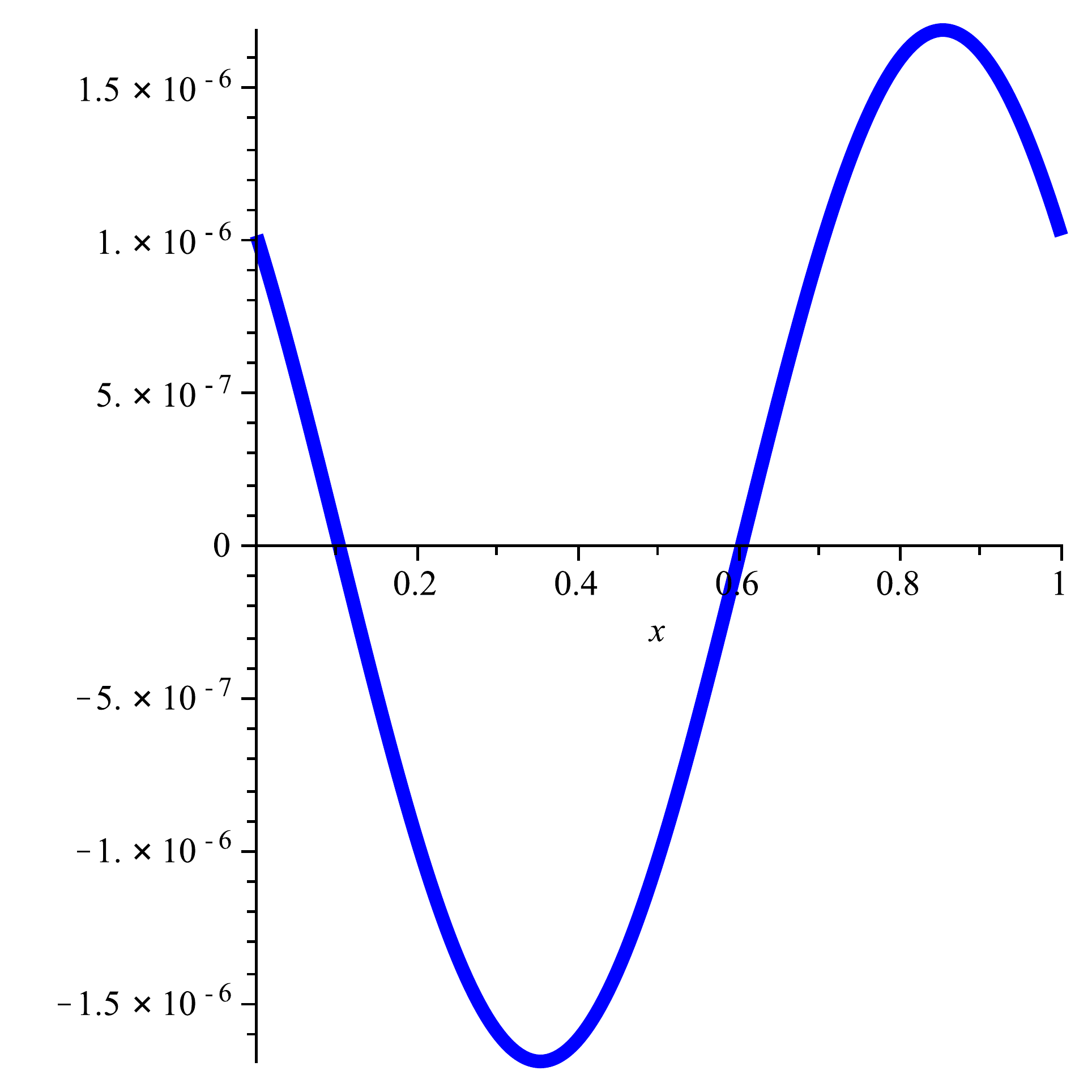}	
\end{center}
\caption{\emph{Periodic oscillations of the variance when $p=1/2$: 
$\mathbb{V}(X_n)/n$ in logarithmic scale (left) and the fluctuating 
part $\frac1{\log 2} \sum_{k\not=0} G(-1+\chi_k) e^{-2k\pi i x}$ 
(right).}} \label{fg-var-size-1}
\end{figure}	

We now state the result in the asymmetric case. 

\begin{thm}[Asymmetric case: $p\ne q$] \label{thm-size-asym}
The variance of the size of random asymmetric tries satisfies 
\[
    \frac{\mathbb{V}(X_n)}{n} = \frac{G(-1)}{h}+
    \mathscr{F}[G](r\log_{1/p}n)+o(1),
\]
where
\begin{equation} \label{var-size-G1}
\begin{split}
    G(-1) &= \frac12-\frac1h+ 2\sum_{j\ge2}
    \frac{(-1)^j(p^{j}+q^{j})}
    {1-p^{j}-q^{j}}\\
    &\qquad -\begin{cases}
        {\displaystyle\frac1{h\log p}\sum_{j\ge1}
        \frac{4rj\pi^2}{\sinh\frac{2rj\pi^2}{\log p}}},
        &\text{if } \frac{\log p}{\log q}\in{\mathbb Q};\\
        0,&\text{if } \frac{\log p}{\log q}\not\in{\mathbb Q},
    \end{cases}
\end{split}
\end{equation}
and for $k\not=0$ (only when 
$\frac{\log p}{\log q}\in\mathbb{Q}$)
\begin{align} 
\label{var-size}
\begin{split}
    G(-1+\chi_k)
    &=\chi_k\Gamma(-1+\chi_k)
    \left(1-\frac{\chi_k+3}{2^{1+\chi_k}}\right)
    -\frac{1}{h}\sum_{j\in\mathbb{Z}}
	\Gamma(\chi_j+1)\Gamma(\chi_{k-j}+1)\\
	&\qquad-2\sum_{j\ge 1}
	\frac{(-1)^j(j+1+\chi_k)\Gamma(j+\chi_k)
	\left(p^{j+1}+q^{j+1}\right)}{(j-1)!(j+1)
	\left(1-p^{j+1}-q^{j+1}\right)}.
\end{split}
\end{align}
\end{thm}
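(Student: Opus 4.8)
The plan is to deduce Theorem~\ref{thm-size-asym} from Theorem~\ref{var}(b)(i) and then to turn the constant $G(-1)$ and the Fourier coefficients $G(-1+\chi_k)$ into the stated closed forms. Since $T_n=1$ for $n\ge2$ and $X_0=X_1=0$, one has $\tilde{g}_1(z)=\tilde{g}_2(z)=1-(1+z)e^{-z}$, which lie in $\JS_{\!\!\!0,0}$ by Proposition~\ref{prop-closure}, and $\tilde{V}_T(z)$ is the exponentially small function \eqref{VT-size}, so in particular $\tilde{V}_T(z)=O(1)$ in any sector $|\arg z|\le\theta<\pi/2$. Thus the hypotheses of Theorem~\ref{var}(b)(i) hold with $\alpha=\beta=0$, giving $\mathbb{V}(X_n)/n=G(-1)/h+\mathscr{F}[G](r\log_{1/p}n)+o(1)$ with $G=\Phi_1+\Phi_2$, where $\Phi_1(s)=\mathscr{M}[\tilde{V}_T+\tilde{\phi}_1;s]$ and $\Phi_2$ is the analytic continuation of $\mathscr{M}[\tilde{\phi}_2;s]$; everything now hinges on evaluating $\Phi_1(-1+\chi_k)+\Phi_2(-1+\chi_k)$ in closed form.

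For $\Phi_1$, the Mellin transform of $\tilde{V}_T$ is elementary, a finite linear combination of $\Gamma(s+a)$ and $2^{-s}\Gamma(s+a)$ arising from the terms $e^{-z}(1+z)$ and $e^{-2z}(1+2z+z^2+z^3)$ in \eqref{VT-size}; its apparent pole at $s=-1$ cancels, in agreement with $\tilde{V}_T+\tilde{\phi}_1=o(|z|)$. For $\tilde{\phi}_1$ as given in \eqref{s1-size} I would replace $\tilde{f}_1(pz),\tilde{f}_1(qz)$ and their derivatives by their inverse-Mellin representations built from $\mathscr{M}[\tilde{f}_1;w]=-(w+1)\Gamma(w)/(1-p^{-w}-q^{-w})$ (legitimate because $G_1(w)=-(w+1)\Gamma(w)$ is exponentially small on vertical lines by the Exponential Smallness Lemma) and interchange integrations, producing a Mellin--Barnes integral for $\mathscr{M}[\tilde{\phi}_1;s]$ whose integrand carries the factors $p^{-w},q^{-w}$ and $1/(1-p^{-w}-q^{-w})$. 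Evaluating at $s=-1+\chi_k$ by shifting the $w$-contour and collecting residues, the poles of $G_1$ at the negative integers give the single series in \eqref{var-size} with its factors $p^{j+1}+q^{j+1}$ and $1-p^{j+1}-q^{j+1}$, while the zeros $w=-1+\chi_j$ of $1-p^{-w}-q^{-w}$ (at which the denominator has derivative $-h$, since $p^{\chi_j}=q^{\chi_j}=1$) give terms of the shape $h^{-1}\Gamma(\,\cdot+\chi_j)\Gamma(\,\cdot+\chi_{k-j})$. The shift is justified by the zero distribution of $1-p^{-w}-q^{-w}$ from \cite{PF055} together with the just-quoted decay; some attention is needed where a pole from $1/(1-p^{-w}-q^{-w})$ and a pole from a $\Gamma$-factor collide on $\Re(w)=-1$.

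For $\Phi_2$, I would apply verbatim the residue-calculus simplification leading to \eqref{Phi-1-chi}. Here $\tilde{g}_1(z)=\sum_{j\ge2}\frac{(-1)^j(j-1)}{j!}z^j$, so $\tilde{b}_j=(-1)^j(j-1)$; using $(j-2+\chi_k)G_1(j-2+\chi_k)=-\Gamma(j+\chi_k)$ and $(\chi_j-1)G_1(\chi_j-1)=-\Gamma(1+\chi_j)$ the quantity $J$ in \eqref{J} becomes an explicit single series, and the integral term of \eqref{Phi-1-chi} is $\int_0^\infty\tilde{g}_1'(t)^2t^{-1+\chi_k}\,\mathrm{d}t=\int_0^\infty t^{1+\chi_k}e^{-2t}\,\mathrm{d}t=2^{-2-\chi_k}\Gamma(2+\chi_k)$, since $\tilde{g}_1'(t)=te^{-t}$. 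This presents $\Phi_2(-1+\chi_k)$ as a single series, plus the double series $-h^{-1}\sum_{j\in\mathbb{Z}}\Gamma(1+\chi_j)\Gamma(1+\chi_{k-j})$ (absolutely convergent by the rapid decay of $\Gamma$ on vertical lines), plus an elementary term.

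The last step, which I expect to be the main obstacle, is to add $\Phi_1(-1+\chi_k)$ and $\Phi_2(-1+\chi_k)$ and to see the cancellations that collapse the two residue expansions to the compact forms \eqref{var-size-G1} and \eqref{var-size}; this is not conceptually deep but is bookkeeping-heavy. Concretely, the residues at $w=-1+\chi_j$ in $\Phi_1$ are tailored to cancel one of the two symmetric halves produced by $2J$ and the change of variable in \eqref{Phi-1-chi}, leaving precisely one copy of $-h^{-1}\sum_{j\in\mathbb{Z}}\Gamma(1+\chi_j)\Gamma(1+\chi_{k-j})$; for $k=0$ this evaluates, via $\Gamma(1+iy)\Gamma(1-iy)=\pi y/\sinh(\pi y)$, to $-h^{-1}\bigl(1+2\sum_{j\ge1}\tfrac{2rj\pi^2/|\log p|}{\sinh(2rj\pi^2/|\log p|)}\bigr)$, whose ``$1$'' supplies the $-1/h$ in \eqref{var-size-G1} and whose series supplies the $1/\sinh$ term there. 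The remaining elementary contributions ($\mathscr{M}[\tilde{V}_T]$, the term $2^{-2-\chi_k}\Gamma(2+\chi_k)$, and the non-series parts of the single sums) collapse, using $\Gamma(s+1)=s\Gamma(s)$, to $\chi_k\Gamma(-1+\chi_k)\bigl(1-(\chi_k+3)2^{-1-\chi_k}\bigr)$ when $k\ne0$ and to the constants $\tfrac12-\tfrac1h$ when $k=0$, and the surviving $j$-series coalesce into the displayed last sum of \eqref{var-size}. A useful consistency check along the way is that specializing $p=q=\tfrac12$ must reproduce Theorem~\ref{thm-size-sym}, and equating the two constants is exactly identity \eqref{id}.
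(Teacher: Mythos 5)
Your overall route coincides with the paper's: reduce to Theorem~\ref{var}(b)(i) using $\tilde{g}_1=\tilde{g}_2=1-(1+z)e^{-z}\in\JS_{\!\!\!0,0}$ and the exponentially small $\tilde{V}_T$ of \eqref{VT-size}; evaluate $\Phi_1=\mathscr{M}[\tilde{V}_T+\tilde{\phi}_1;s]$ by inserting the inverse Mellin representations of $\tilde{f}_1,\tilde{f}_1'$ into \eqref{s1-size} and collecting residues; and evaluate $\Phi_2(-1+\chi_k)$ by the simplification procedure \eqref{Phi-1-chi}--\eqref{J} with $\tilde{b}_j=(-1)^j(j-1)$ and $G_1(s)=-(s+1)\Gamma(s)$. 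Your concrete evaluations there are right: $\tilde{g}_1'(t)=te^{-t}$, the integral term $2^{-2-\chi_k}\Gamma(2+\chi_k)$, $(j-2+\chi_k)G_1(j-2+\chi_k)=-\Gamma(j+\chi_k)$, the reduction of the double sum at $k=0$ via $\Gamma(1+iy)\Gamma(1-iy)=\pi y/\sinh(\pi y)$ to the $-1/h$ and $\sinh$-terms of \eqref{var-size-G1}, and the consistency checks against Theorem~\ref{thm-size-sym} and identity \eqref{id}.

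The step that would fail as written is your account of $\Phi_1$ and of the ensuing cancellation. After substituting \eqref{sizedifmean}-type representations into \eqref{Y1}, the $w$-contour lies on $\Re(w)=-\tfrac32$ and the series is obtained by pushing it to the \emph{left}, where the only singularities met are the poles of $(w+1)\Gamma(w)$ at $w=-2,-3,\dots$; the zeros of $1-p^{-w}-q^{-w}$ all satisfy $\Re(w)\ge-1$ and are never crossed. Hence $\Phi_1(-1+\chi_k)$ consists only of $\mathscr{M}[\tilde{V}_T;-1+\chi_k]$ plus the single series $2Y_1(-1+\chi_k)$; it contains no terms of the shape $h^{-1}\Gamma(\cdot+\chi_j)\Gamma(\cdot+\chi_{k-j})$, and there is no cancellation of ``one of the two symmetric halves'' against $\Phi_2$. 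The unique copy of $-h^{-1}\sum_{j\in\mathbb{Z}}\Gamma(\chi_j+1)\Gamma(\chi_{k-j}+1)$ in \eqref{var-size} comes entirely from the middle term of \eqref{Phi-1-chi}, i.e.\ from $\Phi_2$. Collecting residues on both sides of the contour in a single expansion, as your sketch suggests, is not a legitimate contour manipulation and would in effect double-count the double sum, which your hypothesized cancellation is then invented to repair. In the correct assembly, the purely elementary contributions, namely $\mathscr{M}[\tilde{V}_T;-1+\chi_k]$ together with $2^{-2-\chi_k}\Gamma(2+\chi_k)$, already combine exactly to $\chi_k\Gamma(-1+\chi_k)\bigl(1-(\chi_k+3)2^{-1-\chi_k}\bigr)$, the double sum is the middle term of \eqref{Phi-1-chi}, and the two single series ($2Y_1$ from $\Phi_1$ and $2J$ from $\Phi_2$) combine into the last sum of \eqref{var-size}; carrying out that last combination (rather than the cancellation you describe) is where the real bookkeeping lies.
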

These expressions also hold in the symmetric case. However, 
the expressions for the Fourier coefficients in Theorem 
\ref{thm-size-sym} are simpler.

While the asymptotic pattern of the variance has long been known, 
the expressions for the Fourier coefficients have, as far as we
were aware, never been stated before in the above explicit forms.

Consider, for concreteness, the special rational case when $q=p^2$.
Then $p=(\sqrt{5}-1)/2$ is the golden ratio. From 
\eqref{var-size-G1}, we see that the non-periodic dominant term 
for the ratio between the variance and $n$ is given by 
\begin{align*}
    \frac{G(-1)}{h} &= \frac1h\left(
    \frac12-\frac1h+ 2\sum_{j\ge2}
    \frac{(-1)^j(p^{j}+p^{2j})}
    {1-p^{j}-p^{2j}}-\frac1{h\log p}\sum_{j\ge1}
    \frac{4j\pi^2}{\sinh\frac{2rj\pi^2}{\log p}}\right)\\
    &\approx 1.00834\,52644\,70994\dots,
\end{align*}
which is larger than the symmetric case \eqref{v-ss}. In general, 
$G(-1)=G(-1;p)$ is a symmetric bath-tub-shaped function of $p$
with its lowest value reached at $p=0.5$. 
The fluctuation of the periodic part is bounded above in modulus by 
$7.3\times 10^{-8}$; see Figure~\ref{fg-var-size-2}.

\begin{figure}[!ht]
\begin{center}
\includegraphics[width=5.2cm,height=3.6cm]{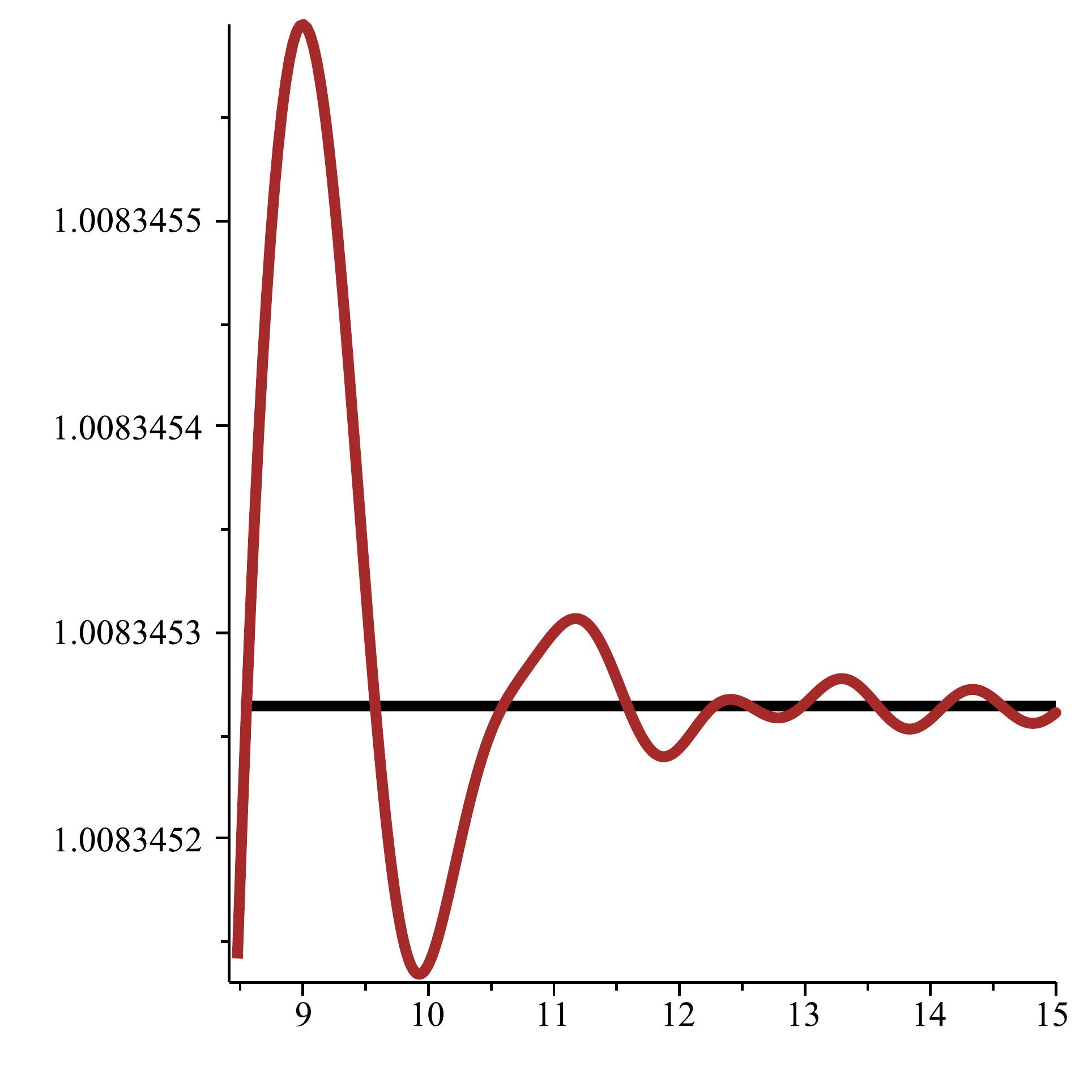}
\quad
\includegraphics[width=5cm,height=3.4cm]{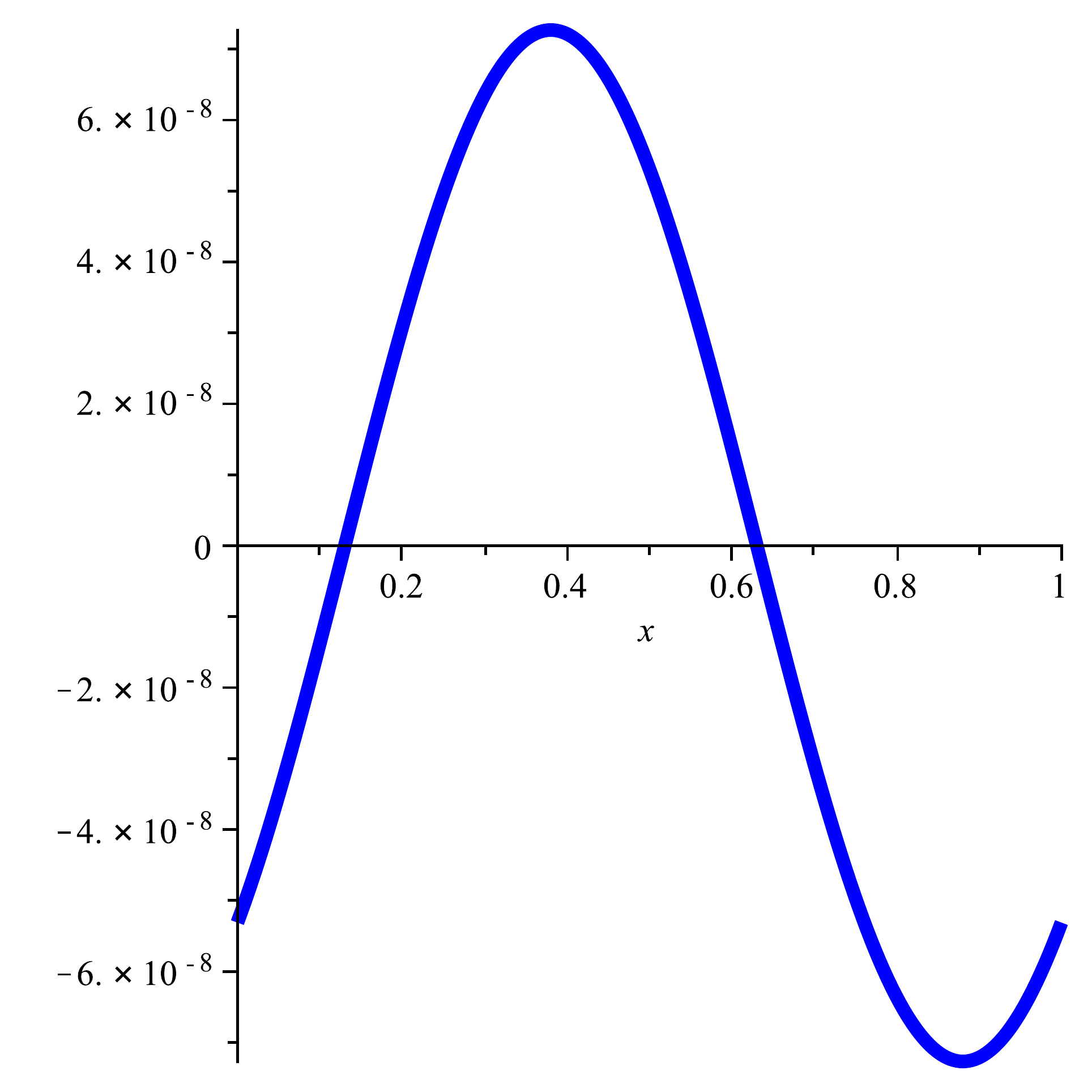}	
\end{center}
\caption{\emph{Periodic oscillations of the variance when 
$p=(\sqrt{5}-1)/2$: $\mathbb{V}(X_n)/n$ in logarithmic scale (left) 
and the fluctuating part $\mathscr{F}[G](x)$ 
(right).}} \label{fg-var-size-2}
\end{figure}	

\medskip

\noindent{\it Proof of Theorem \ref{thm-size-sym} and Theorem 
\ref{thm-size-asym}.} We look first at $\Phi_1(s)= \mathscr{M} 
[\tilde{V}_T+\tilde{\phi}_1; s]$. By \eqref{VT-size} and 
\eqref{s1-size}, we have
\[
    \Phi_1(s)=(s+1)\Gamma(s)
    \left(1-\frac{s^2+4s+8}
    {2^{s+3}}\right)+2Y_1(s),
\]
where
\begin{align}\label{Y1}
    Y_1(s):=\int_{0}^{\infty}z^{s-1}
    e^{-z}\left((1+z)\left(\tilde{f}_1(pz)+
    \tilde{f}_1(qz)\right)-z^2\left(p\tilde{f}'_1(pz)
    +q\tilde{f}'_1(qz)\right)\right) \dd z.
\end{align}
To simplify this integral, we use the inverse Mellin integral
\[
    \tilde{f}_1(z)=-\frac{1}{2\pi i}
    \int_{(-\frac32)}\frac{(w+1)
    \Gamma(w)}{1-p^{-w}-q^{-w}}\,z^{-w}\dd w,
\]
which, by taking derivative with respect to $z$,
\begin{equation}\label{sizedifmean}
    \tilde{f}'_1(z)=\frac{1}{2\pi i}
    \int_{(-\frac32)}\frac{\Gamma(w+2)}
    {1-p^{-w}-q^{-w}}\,z^{-w-1}\dd w.
\end{equation}
Substituting these into \eqref{Y1} yields
\begin{align*}
    Y_1(s)&=-\frac{1}{2\pi i}\int_{(-\frac32)}
    \frac{(w+1)((1+w)(s-w)+1)\Gamma(w)
    \Gamma(s-w)(p^{-w}+q^{-w})}
    {1-p^{-w}-q^{-w}} \dd w\\
    &=\sum_{j\ge 1}\frac{(-1)^jj(j(j+s+1)-1)
    \Gamma(j+s+1)(p^{j+1}+q^{j+1})}
    {(j+1)!(1-p^{j+1}-q^{j+1})},
\end{align*}
where the last expression is obtained by shifting the line of
integration to the left and by collecting all the residues
encountered.

In particular, when $p=1/2$,
\begin{align}\label{G1-size}
\begin{split}
    \Phi_1(s) &=(s+1)\Gamma(s)
    \left(1-\frac{s^2+4s+8}
    {2^{s+3}}\right) \\ &\qquad+2\sum_{j\ge1}
    \frac{(-1)^jj(j(j+s+1)-1)\Gamma(j+s+1)}
    {(j+1)!(2^j-1)},
\end{split}
\end{align}
which proves \eqref{v-ss} and \eqref{v-ssk}.

We now consider $\Phi_2(s)=\mathscr{M}[\tilde{\phi}_2;s]$
(see \eqref{s1-s2}). By (\ref{Phi-2-s}),
\begin{equation}\label{Phi-2-s-size}
    \Phi_2(s)=\frac{pq}{2\pi i}\int_{(0)}
    \frac{\Gamma(w+1)\left(p^{-w}-q^{-w}\right)}
    {1-p^{1-w}-q^{1-w}}\cdot
    \frac{\Gamma(s-w+2)
    \left(p^{w-s-1}-q^{w-s-1}\right)}
    {1-p^{w-s}-q^{w-s}}
    \dd w,
\end{equation}
where we shifted the line of integration to the imaginary axis. 
Since the above function is also analytic on $\Re(s)= -1$, 
we can substitute $s=-1+\chi_k$. The expressions 
\eqref{var-size-G1} and \eqref{var-size} are then obtained by
the simplification procedure that we used to derive \eqref{Phi-1-chi}
(for $\Phi_2(-1+\chi_k)$) and \eqref{J} with $\tilde{g}_1(z) 
= 1-(1+z)e^{-z}$ and $G_1(s)= -(s+1)\Gamma(s)$. \qed

\medskip

An alternative way of simplifying $\Phi_2(-1+\chi_k)$ is to 
shift the line of integration of (\ref{Phi-2-s-size}) to the left
and collect all residues encountered. 
This then yields the somehow more complicated expression
\begin{align*}
    \Phi_2(-1+\chi_k)&=pq\sum_{j\ge 1}
    \frac{(-1)^{j-1}\Gamma(j+\chi_k+1)
    \left(p^j-q^j\right)\left(p^{-j}-q^{-j}\right)}
    {(j-1)!(1-p^{1+j}-q^{1+j})(1-p^{1-j}-q^{1-j})}\\
    &\qquad\;-\sum_{\omega_j}
    \frac{\Gamma(\omega_j+1)\Gamma(-\omega_j+\chi_k+1)}
    {p^{1+\omega_j}\log p+q^{1+\omega_j}\log q},
\end{align*}
where $\omega_j$ runs over all zeros of $1-p^{1+w}-q^{1+w}=0$
with $\Re(\omega_j)<0$, and we used the relation 
\[
    \frac{pq(p^{\omega_j}-q^{\omega_j})
    (p^{-\omega_j}-q^{-\omega_j})}
    {1-p^{1-\omega_j}-q^{1-\omega_j}}=1.
\]
Here the convergence of the second series follows from the 
exponential decay of Gamma function at $c\pm i\infty$
and the property that the zeros $\omega_j$ are isolated in nature
(and equally spaced along vertical lines when $\frac{\log p}{\log
q}\in\mathbb{Q}$); see \cite{PF055} for details. Then we apply the
same procedure for deriving \eqref{Phi-1-chi} to further 
simplify the second series. 

\subsection{External path length}

The cost of constructing tries is directly proportional to the
external path length, which is the sum of all the distances between
each external node (where keys are stored) to the root. For example,
the external path length of the trie showed in Figure~\ref{fg-trie}
equals $2 + 3 + 4\times 3 + 5\times 2 = 27$. Under the same
Bernoulli model, the external path length is a random variable,
still denoted by $X_n$, satisfying \eqref{Xn-rr} with $T_n=n$. This
implies that the Poisson generating functions of the first two
moments of $T_n$ are given by
\[
    \tilde{g}_1(z)=z(1-e^{-z})\qquad\text{and}
    \qquad\tilde{g}_2(z)=z(1+z-e^{-z}).
\]
Thus JS-admissibility of these two functions follows directly from
Proposition~\ref{prop-closure}. Also $\tilde{g}_1(z) =z+O\left(\vert
z\vert^{-\delta}\right)$ uniformly as $|z|\to\infty$ and
$\vert\arg(z)\vert<\pi/2-\ve$ for all $\ve,\delta>0$. Moreover,
$\tilde{V}_T(z) =ze^{-z}(1-e^{-z}(1 -z+z^2))$ and
\[
    \tilde{\phi}_1(z)=e^{-z}\left(2z\tilde{f}_1(pz)
    +2z\tilde{f}_1(qz)+2pz(1-z)\tilde{f}'_1(pz)
    +2qz(1-z)\tilde{f}'_1(qz)\right),
\]
both being again exponential small.

For the expected value, we have $G_1(s) := \mathscr{M}[\tilde{g}_1
;s] =-\Gamma(s+1)$ and thus
\[
    \lim_{s\rightarrow-1}
    \left(G_1(s)+\frac1{s+1}\right)=\gamma,
\]
where $\gamma$ is Euler's constant. Applying Theorem \ref{mean}, we
obtain
\begin{align*}
    \frac{\mathbb{E}(X_n)}{n} &=
    \frac{1}{h}\log n+\frac{\gamma}{h}
    +\frac{p\log^2p+q\log^2 q}{2h^2}
    +\mathscr{G_1}[G_1](r\log_{1/p}n)
    +o(1).
\end{align*}
While this result has been widely known and discussed (see, for
example, \cite{knuth98a,jacquet86a,mahmoud92a}), the variance
is rarely addressed (see \cite{jacquet86a,jacquet88b,kirschenhofer89b})
due partly to its complexity and partly to methodological limitations.

\begin{thm} The variance of the total external path length satisfies
\[
    \frac{\mathbb{V}(X_n)}{n}
    =\frac{1}{\log 2}\sum_{k\in\mathbb{Z}}
    \Phi_1(-1+\chi_k)n^{-\chi_k} +o(1),
\]
in the symmetric case (when $p=1/2$), and
\begin{align*}
    \frac{\mathbb{V}(X_n)}{n} &=\frac{pq\log^2(p/q)}{h^3}\log n
    +\frac{d}{h}+\frac{pq\log^2(p/q)(p\log^2p+q\log^2 q)}{2h^4}
    \\ & \qquad +\mathscr{F}[G](r\log_{1/p}n)
    +o(1),
\end{align*}
in the asymmetric case, where $G=\Phi_1+\Phi_2$ with $\Phi_1$,
$\Phi_2$ and $d$ given below in \eqref{EPL-G1}, \eqref{EPL-H1}, and
\eqref{EPL-d}, respectively.
\end{thm}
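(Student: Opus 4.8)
The plan is to read both expansions off the transfer Theorem~\ref{var} and then to make $\Phi_1$, $\Phi_2$ and $d$ explicit by Mellin/residue calculus. The hypotheses of Theorem~\ref{var} were already checked in the paragraph preceding the statement: $\tilde{g}_1(z)=z(1-e^{-z})$ and $\tilde{g}_2(z)=z(1+z-e^{-z})$ are JS-admissible by Proposition~\ref{prop-closure} (with $\tilde{g}_1\in\JS_{\!\!\!1,0}$), one has $\tilde{g}_1(z)=z+O(|z|^{-\delta})$ uniformly for $|\arg(z)|\le\theta<\pi/2$, and $\tilde{V}_T(z)=ze^{-z}(1-e^{-z}(1-z+z^2))$ is exponentially small in that sector, so the growth condition on $\tilde{V}_T$ holds with any $\alpha<0$. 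Hence Theorem~\ref{var}(a) yields the symmetric formula and Theorem~\ref{var}(b)(ii) the asymmetric one; in particular the overall shape of $\mathbb{V}(X_n)/n$ --- including the $n\log n$-coefficient $pq\log^2(p/q)/h^3$ and the explicit constant $pq\log^2(p/q)(p\log^2p+q\log^2 q)/(2h^4)$ in the asymmetric case --- is immediate, and all that remains is to identify $\Phi_1(s)=\mathscr{M}[\tilde{V}_T+\tilde{\phi}_1;s]$, $\Phi_2(s)=\mathscr{M}[\tilde{\phi}_2;s]$ and $d=\Phi_1(-1)+\lim_{s\to-1}\bigl(\Phi_2(s)+pq\log^2(p/q)/(h^2(s+1))\bigr)$ in closed form.

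For $\Phi_1$ I would first compute the elementary part $\mathscr{M}[\tilde{V}_T;s]$, a finite Gamma combination which collapses to $\Gamma(s+1)\bigl(1-(s^2+s+4)2^{-s-3}\bigr)$. For $\mathscr{M}[\tilde{\phi}_1;s]$, with $\tilde{\phi}_1$ as displayed before the theorem, I would substitute the inverse Mellin representations (using $G_1(s)=-\Gamma(s+1)$)
\[
    \tilde{f}_1(z)=-\frac1{2\pi i}\int_{(-3/2)}
    \frac{\Gamma(w+1)}{1-p^{-w}-q^{-w}}\,z^{-w}\dd w,
    \qquad
    \tilde{f}'_1(z)=\frac1{2\pi i}\int_{(-3/2)}
    \frac{\Gamma(w+2)}{1-p^{-w}-q^{-w}}\,z^{-w-1}\dd w,
\]
interchange the order of integration, evaluate the inner $z$-integral as a Gamma function, and thereby reduce $\mathscr{M}[\tilde{\phi}_1;s]$ to a single contour integral in $w$ whose integrand carries the factor $(p^{-w}+q^{-w})/(1-p^{-w}-q^{-w})$; shifting the contour to the left and summing the residues at the poles $w=s+j$ ($j\ge0$) of the Gamma factors yields the series \eqref{EPL-G1}. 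This is the exact analogue of the evaluation of $Y_1(s)$ in Section~\ref{sec-size}, the only extra effort being to combine the four summands of $\tilde{\phi}_1$ (with their differing powers of $z$ and the factor $1-z$) into one tidy sum; setting $p=1/2$ then gives the simpler $\Phi_1$ occurring in the symmetric formula.

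For $\Phi_2$, which vanishes identically when $p=q$, I would start from the general representation \eqref{Phi-2-s} with $G_1(s)=-\Gamma(s+1)$, shift the $w$-integration onto the imaginary axis, set $s=-1+\chi_k$, and run the residue-calculus simplification of \eqref{Phi-1-chi}--\eqref{J}. Here $\tilde{g}_1(z)=\sum_{j\ge2}\tilde{b}_jz^j/j!$ with $\tilde{b}_j=(-1)^jj$, so \eqref{J} produces the main, hypergeometric-type series. The new feature, compared with the size example of Section~\ref{sec-size}, is that $\tilde{g}'_1(z)=1-(1-z)e^{-z}$ does not decay at infinity (this is the $n\log n$-variance regime rather than the linear one of Theorem~\ref{var}(b)(i)): the integral $\int_0^\infty\tilde{g}'_1(t)^2t^{-1+\chi_k}\dd t$ in \eqref{Phi-1-chi} is then divergent and must be regularised by splitting off the constant part of $\tilde{g}'_1(t)^2$ and treating $k=0$ separately, while the simple pole of $\Phi_2$ at $s=-1$, whose residue is $-pq\log^2(p/q)/h^2$ by Theorem~\ref{var}(b)(ii), must be carried through. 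Assembling the regularised pieces gives \eqref{EPL-H1}, and subtracting this pole term and reading off the constant part together with $\Phi_1(-1)$ gives \eqref{EPL-d}.

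The genuine analysis --- de-Poissonization, the Hadamard-product estimate of Proposition~\ref{prop-hada}, and the Mellin transfer of Proposition~\ref{prop-at2} --- is already packaged inside Theorem~\ref{var}, so the residual work is purely explicit Mellin/residue bookkeeping. I expect the principal obstacle to be the evaluation of $\Phi_2(-1+\chi_k)$: besides the poles of the Gamma factors one must also keep track of the zeros $\omega_j$ of $1-p^{1+w}-q^{1+w}=0$ crossed when shifting contours (just as in the alternative size expression), verify the mild regularity of $(s-1)\Gamma(s)$ that legitimises the simplified series, control the regularisation forced by the non-decaying $\tilde{g}'_1$, and push the local expansion at $s=-1$ far enough that the pole of $\Phi_2$ cancels the subtracted term to $O(1)$; convergence of the resulting series is then guaranteed by the exponential decay of the Gamma function along vertical lines together with the equally spaced location of the $\omega_j$ when $\log p/\log q\in\mathbb{Q}$.
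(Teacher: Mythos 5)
Your proposal follows essentially the same route as the paper: check the hypotheses of Theorem~\ref{var} (JS-admissibility of $\tilde{g}_1,\tilde{g}_2$, exponential smallness of $\tilde{V}_T$), read the two asymptotic forms off parts (a) and (b)(ii), and then identify $\Phi_1$, $\Phi_2$ and $d$ by the same Mellin/residue bookkeeping used for the size example (the paper's own proof is literally ``same pattern as the size, details omitted''), including the genuine extra subtlety you correctly flag in the $n\log n$ regime (non-decaying $\tilde{g}_1'$, the pole of $\Phi_2$ at $s=-1$, and the coincidence of the pole of $(w-1)G_1(w-1)$ with the zero of $1-p^{1-w}-q^{1-w}$ at $w=0$, which produces the $\psi$ and $\gamma$ terms in $I_1$). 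One slip to correct: since here $G_1(s)=-\Gamma(s+1)$, the kernel in your inverse-Mellin formula for $\tilde{f}_1'$ must be $w\Gamma(w+1)$, not $\Gamma(w+2)$ --- you transcribed \eqref{sizedifmean}, which belongs to the size example with $G_1(s)=-(s+1)\Gamma(s)$; with the correct kernel the residues at $w=-j-1$ produce the factor $j(s+j)-1$ and hence \eqref{EPL-G1}, whereas the displayed kernel would not.
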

The proof follows the same pattern as that used for the size,
details being omitted here. In particular, we have
\begin{align} \label{EPL-G1}
\begin{split}
    \Phi_1(s) &= \Gamma(s+1)
    \left(1-\frac{s^2+s+4}{2^{s+3}}\right)\\
    &\qquad +2\sum_{j\ge 1}\frac{(-1)^j
    (j(s+j)-1)(p^{j+1}+q^{j+1})
    \Gamma(s+j+1)}{j!(1-p^{j+1}-q^{j+1})},
\end{split}
\end{align}
where
\begin{align}\label{EPL-d}
    d=\Phi_1(-1)+pq\frac{\log^2(p/q)}{h^2}
    \left(\gamma+1+\frac{p\log^2p+q\log^2q}{2h}
    +\frac{\log p+\log q}{2}\right)+I_1(-1),
\end{align}
and for $k\ne 0$,
\begin{align} \label{EPL-H1}
    \Phi_2(-1+\chi_k)=pq\frac{\log^2(p/q)}{h^2}
    (\chi_k-1)\Gamma(\chi_k)+I_1(-1+\chi_k),
\end{align}
where
\begin{align*}
	I_1(-1)&=\frac{1}{4}-\log 2+\frac{\pi^2}{6h}-\frac{1}{h}
	+\frac{p\log^3p+q\log^3q}{6h^2}
	+\frac{(p\log^2p+q\log^2q)^2}{4h^3}\\
	&\qquad-2\sum_{j\geq 1}\frac{(-1)^j(j^2-1)(p^{j+1}+q^{j+1})}
	{j(1-p^{j+1}-q^{j+1})}\\
	&\qquad+\begin{cases}
	{\displaystyle\frac{1}{h}\sum_{j\ne 0}
	(\chi_j^2-1)\Gamma(\chi_j)\Gamma(-\chi_j)},
	&\text{if } \frac{\log p}{\log q}\in{\mathbb Q}\\0,
	&\text{if } \frac{\log p}{\log q}\not\in{\mathbb Q}
	\end{cases}.
\end{align*}
and for $k\ne 0$,
\begin{align*}
	I_1(-1+\chi_k)
	&=\Gamma(\chi_k)\left(\chi_k-1+
	\frac{\chi_k^2-3\chi_k+4}{2^{2+\chi_k}}\right)
	+\frac{2\Gamma(\chi_k)}{h}
	\left((1-\chi_k)
	(\psi(\chi_k+\gamma)-\chi_k\right)\\
	&\qquad-\frac{1}{h}
	\sum_{j\ne 0,k}(\chi_j-1)\Gamma(\chi_j)
	(\chi_{k-j}-1)\Gamma(\chi_{k-j})\\
	&\qquad +2\sum_{j\geq 1}\frac{(-1)^{j-1}(j+1)(\chi_k+j-1)
	\Gamma(\chi_k+j)(p^{j+1}+q^{j+1})}{j!(1-p^{j+1}-q^{j+1})}.
\end{align*}
The Fourier series is new.

In particular, in the symmetric case the Fourier coefficients of the
periodic function are given by
\[
    \frac{\Phi_1(-1)}{\log 2}
    =\frac{1}{\log 2}\left(\frac{1}{4}+\log 2 +2\sum_{j\ge
    1}\frac{(-1)^j(j^2-j-1)}{j(2^j-1)}\right) \approx
    4.352906698945400\cdots,
\]
and for $k\ne0$
\[
\begin{split}
    \frac{\Phi_1(-1+\chi_k)}{\log 2} &= \frac{(1-\chi_k)
    \Gamma(\chi_k+1)}4
    +2\sum_{j\ge 1}\frac{(-1)^j
    (j(j-1+\chi_k)-1)\Gamma(j+\chi_k)}
    {j!(2^j-1)}.
\end{split}
\]
The above numerical value for $\Phi_1(-1)/\log 2$ is in accordance
with that obtained in \cite{kirschenhofer89b} where the authors
derived the alternative expression
\[
    1+\frac{1}{2\log 2}-\frac{1}{\log^2 2}
    -\frac{2}{\log 2}\sum_{j\ge 1}
    \frac{(-1)^{j}(j+1)}{j(2^j-1)}
    -\frac{4\pi^2}{\log^3 2}\sum_{j\ge 1}
    \frac{j}{\sinh\frac{2j\pi^2}{\log 2}}.
\]
Equating them gives the same identity (\ref{id}) as we encountered 
in the size of tries.

\subsection{Radix sort}

Bucketing is a common design paradigm used for sorting or selecting
elements with specified properties; see \cite{devroye86a}. For
sorting purposes, a simple procedure, called \emph{radix sort}, is
to distribute elements into $b$ buckets according to their values
and then sort within each bucket recursively; see \cite{knuth98a,PF159}.
Since we can always
normalize elements into the unit interval, splitting into $b$
buckets amounts to using $b$-ary digit expansion of each element and
then distribute according to the leading digits. Thus the radix
sorting process induces a trie with up to $b$ branches at each node.

If we assume that the $n$ elements to be sorted are independent and
identically distributed uniform random variables (from the unit
interval), then the cost $X_n$ of radix sort (number of digit
extractions needed to sort) satisfies (see \cite{PF159})
\[
    \tilde{P}(z,y) = (1-e^y)ze^{-z}
    +e^{-(1-e^y)z} \tilde{P}\left(\frac{e^y z}b,y\right)^b,
\]
where $\tilde{P}(z,y) := e^{-z}\sum_{n\ge0} \mathbb{E}
(e^{X_ny})z^n/n!$. This is nothing but the Poisson generating
function for the external path length of random bucket tries with
branching factor $b$ (using $b$-ary expansion). All analysis above
carries through and we have
\[
    \tilde{f}_1(z) = b\tilde{f}_1(z/b) + z(1-e^{-z}),
\]
and the corresponding $\tilde{V}(z):= \tilde{f}_2(z)- \tilde{f}_1(z)
- z\tilde{f}_1'(z)$ satisfies
\[
    \tilde{V}(z) = b\tilde{V}(z/b) + \tilde{g}(z),
\]
where
\[
    \tilde{g}(z) := e^{-z} \left( 2bz\tilde{f}_1(z/b)+
    2z(1-z)\tilde{f}_1'(z/b)+z(1-e^{-z}) + z^2e^{-z}(1-z)
    \right).
\]
Then the Mellin transform of $\tilde{g}$ is given by
\begin{align} \label{radix-Gs}
\begin{split}
    G(s) &= \Gamma(s+1)\left(1-2^{-s-1} - s2^{-s-3}
    -s^22^{-s-3}\right)\\ &\qquad +2
    \sum_{k\ge1}  \frac{(-1)^k\Gamma(s+k+1)}
    {k!(b^k-1)}(k(s+k)-1)\qquad(\Re(s)>-2).
\end{split}
\end{align}
It follows, by the same Mellin analysis and JS-admissibility, that
($\chi_k := 2k\pi i/\log b$)
\begin{align*}
    \frac{\mathbb{E}(X_n)}{n} &=
    \log_bn +\frac{\gamma}{\log b} +\frac12 +
    \frac1{\log b}\sum_{k\in\mathbb{Z}\setminus\{0\}}
    \Gamma(\chi_k) n^{-\chi_k}+o(1)\\
    \frac{\mathbb{V}(X_n)}{n} &= \frac1{\log b}
    \sum_{k\in\mathbb{Z}}
    G\left(-1+\chi_k\right) n^{-\chi_k} + o(1).
\end{align*}
An expression for $G(s)$ was derived in \cite[p.\ 755]{PF159},
which is more messy than \eqref{radix-Gs}. Indeed, one can
simplify that expression and obtain
\begin{align*}
    \frac{G(s)}{\Gamma(s+1)} &= 1-2^{-s-1} - s2^{-s-3}
    -s^22^{-s-3} \\ &\qquad
    - 2(s+1)(s+2)U(s+3)+2(s+1)U(s+2)+2V(s+1),
\end{align*}
where ($U(s) = V(s-1)-V(s)$)
\[
    U(s) := \sum_{k\ge1} b^{-k}(1+b^{-k})^{-s},
    \quad V(s) := \sum_{k\ge1}\left(1-(1+b^{-k})^{-s}\right).
\]
Such an expression for $G(s)$ is on the other hand also easily
obtained from \eqref{radix-Gs} by binomial theorem.

In particular, by \eqref{radix-Gs},
\[
    G(-1) = \frac14+\log 2 + 2\sum_{k\ge1}
    \left(\left(b^k+1\right)^{-2}
    + \log\left(1+b^{-k}\right)\right).
\]
From this we obtain the following numerical table.
%\begin{table}[!th]
\begin{center}
\begin{tabular}{|c|c|}\hline
    $b$ & $G(-1)/\log b\approx$ \\ \hline\hline
    $2$ & $4.35290\,66989\,45400\,60374\,$ \\ \hline
    $3$ & $1.80839\,11899\,92781\,96720\,$ \\ \hline
    $4$ & $1.18266\,25542\,39848\,25415\,$ \\ \hline
    $5$ & $0.91013\,81377\,49170\,45524\,$ \\ \hline
    $6$ & $0.75883\,87760\,90906\,35697\,$ \\ \hline
    $7$ & $0.66265\,99366\,11117\,50882\,$ \\ \hline
    $8$ & $0.59600\,35264\,60033\,23615\,$ \\ \hline
    $9$ & $0.54696\,00912\,93530\,34188\,$ \\ \hline
    $10$ & $0.50926\,08387\,26247\,61651\,$ \\ \hline
\end{tabular}
\end{center}
%\end{table}
Thus \emph{increasing the number of buckets in radix sort reduces
the variance of the cost}, with the most drastic change from $2$ to $3$.

\subsection{Peripheral path length}
We define the \emph{peripheral path length} of a tree as the sum of
the fringe-sizes of all leaf-nodes, where the fringe-size of a leaf
is defined to be the number of external nodes of the
subtree rooted at its parent-node. This parameter was investigated
in \cite{drmota09a} where it was called the $w$-parameter. It was
also studied in phylogenetics in the context of sum of all minimal
clade sizes (see \cite{blum05a}).

If we define $T_n$
\[
    (T_n\vert I_n=k)=
    \left\{\begin{array}{ll} n-1,
    &\text{if}\ k=1\ \text{or}\ k=n-1
    \\ 0,&\text{otherwise},
    \end{array}\right.
\]
for $n\ge 3$ and with $n-1$ replaced by $2$ for $n=2$, then the
peripheral path length $X_n$ of random tries of $n$ keys
satisfies \eqref{Xn-rr} with the initial conditions $X_0=0$ and
$X_1=1$.

Since $T_n$ depends on $I_n$, such a parameter does not directly fit
in our schemes. However, the same approach applies. The moment generating
function of $X_n$ then has the recursive form
\[
    M_n(y) = \sum_{k\not=1,n-1}\pi_{n,k}M_k(y)M_{n-k}(y)
    + e^{(n-1)y}n\left(pq^{n-1}+qp^{n-1}\right)M_{n-1}(y),
\]
for $n\ge2$ with $M_0(y)=1$ and $M_1(y)=e^y$. It follows that
\begin{align*}
    \tilde{g}_1(z)&=pqz^2(e^{-pz}+e^{-qz})\\
    \tilde{g}_2(z)&=pqz^2(2e^{-z}+(1+qz)e^{-pz}+(1+pz)e^{-qz}),
\end{align*}
which are both exponentially small and JS-admissible by Proposition
\ref{prop-closure}. The function $\tilde{h}_2(z)$ (see \eqref{h2z})
is now given by
\begin{align*}
    \tilde{h}_2(z) &=2e^{-z}\sum_{n\ge 2}\sum_{0\le j\le
    n}\pi_{n,k}\left(\mu_k+ \mu_{n-k}\right)
    \mathbb{E}(T_n\vert I_n=k)\frac{z^n}{n!}\\
    &=pqz^2\Bigl(2e^{-pz}\tilde{f}_1(qz)
    +2e^{-qz}\tilde{f}_1(pz)
    +2e^{-pz}\tilde{f}'_1(qz)+2e^{-qz}\tilde{f}'_1(pz)
    \\ &\qquad\qquad  +2e^{-pz}+2e^{-qz}\Bigr),
\end{align*}
which is also JS-admissible by Proposition \ref{prop-closure}. This
gives rise to the following expression for $\tilde{\phi}_1(z)$ (see
\eqref{s1-s2})
\begin{align*}
    \tilde{\phi}_1(z)&=2pqz^2\Big(-e^{-pz}
    \tilde{f}_1(pz)-e^{-qz}\tilde{f}_1(qz)
    +e^{-pz}+e^{-qz} \\
    &\qquad\qquad +(1-2p+pqz)e^{-qz}
    \tilde{f}'_1(pz)+(1-2q+pqz)e^{-pz}\tilde{f}'_1(qz)\\
    &\qquad\qquad-(2p-p^2z)e^{-pz}\tilde{f}'_1(pz)
    -(2q-q^2z)e^{-qz}\tilde{f}'(qz)\Big).
\end{align*}
Finally,
\begin{align*}
    \tilde{V}_T(z)&=pqz^2
    \Big(2(1-4pqz+pqz^2-p^2q^2z^3)e^{-z}+(1+qz)e^{-pz}
    +(1+pz)e^{-qz}\\
    &\qquad-pqz(4+z-4pz+p^2z^2)e^{-2pz}
    -pqz(4+z-4qz+q^2z^2)e^{-2qz}\Big).
\end{align*}
All these functions are exponentially small for large $|z|$ with
$\Re(z)>0$.

Observe that $G_1(s):= \mathscr{M}[\tilde{g}_1;s] =pq(p^{-s-2}
+q^{-s-2})\Gamma(s+2)$. An application of Theorem \ref{mean} then
gives
\[
    \frac{\mathbb{E}(X_n)}{n}=
    1+\frac{1}{h} + \mathscr{F}[G_1](r\log_{1/p}n)
    +o(1),
\]
where the additional term $1$ on the right-hand side arises from the
initial condition.

Although Theorem \ref{var} does not apply directly to the variance
of $X_n$, the same method of proof works well as in Theorem
\ref{var} part (\emph{b})-(\emph{i}), and we obtain
\[
    \frac{\mathbb{V}(X_n)}{n}=
    \frac{G(-1)}{h} + \mathscr{F}[G](r\log_{1/p}n)
    +o(1),
\]
where a series-form for $G(s)$ can be derived as the discussions
above. For simplicity, we have, in the symmetric case,
\begin{align*}
    G(s)&=s(s+1)\Gamma(s)
    \left(2^{s+1}(s+3)-
    \frac{s^3+5s^2+22s+24}{16}\right)\\
    &\qquad-2^{s+2}\sum_{j\ge 1}\frac{(-1)^j
    \Gamma(s+j+2)}{(j-1)!(2^j-1)}(j(s+j+2)-j-1).
\end{align*}
In particular, the average value of the periodic function is given
by
\[
    \frac{13}{8}-2\sum_{j\ge 1}\frac{(-1)^j
    j(j^2-1)}{2^j-1}
    = \frac{13}8-12\sum_{j\ge1}
    \frac{1}{4^j(1+2^{-j})^4}
    \approx 0.55730\,49532\,49505\cdots.
\]
Note that we can also derive the identity  
\[
    2\sum_{j\ge 1}\frac{(-1)^j
    j(j^2-1)}{2^j-1} = \frac1{\log 2}-\frac38
    + \frac{4\pi^2}{(\log 2)^4}\sum_{k\ge1}
    \frac{k((2k\pi)^2+(\log 2)^2)}{\sinh\frac{2k\pi^2}{\log 2}},
\]
the series on the right-hand side being smaller than $6\times 10^{-9}$.

\subsection{Leader election (or loser selection)}

The coin-flipping process is applicable to single out a leader in
real life or in abstract models: every individual involved throws a
coin and those who get head continue until only one is left; see
\cite{prodinger93a}. In this case, the approach we use so far leads
to extremely simple forms for the number of coin-flippings; this
example thus has a more instructional value. Let $X_n$ denote the
total number of coin flippings used in the leader election procedure
of $n$ people. Then $X_0=X_1=0$ and the exponential generating
function $P(z,y) := \sum_{n\ge0} \mathbb{E}(e^{X_ny})z^n/n!$
satisfies
\[
    P(z,y) = \left(e^{yz/2}+1\right) P
    \left(\frac {yz}2,y\right) -e^{yz/2} +(1-y)z.
\]

Instead of the usual Poisson generating function, we consider,
as in \cite{prodinger93a}, the Bernoulli generating function
\[
    \tilde{f}_m(z) := \frac{1}{e^z-1}\sum_{n\ge0}
    \frac{\mathbb{E}(X_n^m)}{n!}\,z^n.
\]
Then $\tilde{f}_1(0)=0$ and
\[
    \tilde{f}_1(z) = \tilde{f}_1(z/2) + z,
\]
which gives the identity $\tilde{f}_1(z)=2z$. Thus $\mathbb{E}(X_n)
\equiv 2n$ for $n\ge2$. Also the normalized function $\tilde{V} :=
\tilde{f}_2-\tilde{f}_1^2 - z(\tilde{f}_1')^2$ satisfies
\[
    \tilde{V}(z) = \tilde{V}(z/2)+z+\frac{3z^2}{e^z-1}.
\]
Standard Mellin analysis yields
\[
    \tilde{V}(z) = 2z+\frac{\pi^2}{2\log 2}
    +\frac{3}{\log 2}\sum_{k\in\mathbb{Z}\setminus\{0\}}
    \zeta(2+\chi_k)\Gamma(2+\chi_k)z^{-\chi_k}
    +O(|z|^{-1}),
\]
as $|z|\to\infty$ in the half-plane $\Re(z)>0$, where $\zeta(s)$
denotes Riemann's zeta function. Consequently, a similar
de-Poissonization argument leads to
\[
    \sigma_n^2 = 2n +\frac{\pi^2}{2\log 2}
    +\frac{3}{\log 2}\sum_{k\in\mathbb{Z}\setminus\{0\}}
    \zeta(2+\chi_k)\Gamma(2+\chi_k)n^{-\chi_k} + O(n^{-1}).
\]
Thus, with an average of $2n$ coin-tossings and a $\sqrt{n}$-order
of standard deviation, selecting a leader or a loser by such a naive
splitting process is a very efficient procedure.

\section{Further extensions}
\label{sec-more}

Since BSPs appear in a large number of diverse contexts, many
extensions of our frameworks are possible. We briefly discuss some
examples in this section.

\subsection{Internal path length of random tries}

If, instead of summing over all the distances between the root and
each external node (where records are stored), we add up all the
distance between the root and each internal node, then we have the
system of recurrences for the number of internal nodes $N_n$
(already discussed in Section \ref{sec-size}) and the internal path
length $X_n$ in a random trie of $n$ elements under the Bernoulli
model
\begin{align*}
    \left\{\begin{array}{l}
    N_n\stackrel{d}{=}N_{I_n}+N^*_{n-I_n}+1\\
    X_n\stackrel{d}{=}X_{I_n}+X^*_{n-I_n}+N_{I_n}+N^{*}_{n-I_n},
    \end{array}\right.
\end{align*}
for $n\ge2$, with initial conditions $N_0=N_1=I_0=I_1=0$, where
$N_n^*$ and $X_n^*$ are independent copies of $N_n$ and $X_n$,
respectively.

We will see that the variance changes completely its asymptotic
behavior and is asymptotic to $n(\log n)^2$ weighted by a periodic
function. This estimate is independent of the rationality of
$\frac{\log p}{\log q}$. This was previously observed in
\cite{nguyen-the04a} but with incomplete proof; see also the
recent paper \cite{fuchs12c} for a study of Wiener index.

The asymptotics of the variance can be addressed by the same
approach we used for the node-wise path length of random digital
search trees in \cite{hwang10a}. We begin with the moment generating
function $M_n(u,v)=\mathbb{E}(e^{N_nu+X_nv})$, which satisfies the
recurrence
\[
    M_n(u,v)=e^{u}\sum_{0\le k\le
    n}\pi_{n,k}M_k(u+v,v)M_{n-k}(u+v,v)\qquad (n\ge 2).
\]
We then deduce that the Poisson generating functions of
$\mathbb{E}(N_n)$ and $\mathbb{E}(X_n)$, denoted by
$\tilde{f}_{1,0}(z)$ and $\tilde{f}_{0,1}(z)$, respectively, satisfy
the functional equations
\begin{align*}
    \tilde{f}_{1,0}(z)&=\tilde{f}_{1,0}(pz)
    +\tilde{f}_{1,0}(qz)+1-(1+z)e^{-z}\\
    \tilde{f}_{0,1}(z)&=\tilde{f}_{0,1}(pz)
    +\tilde{f}_{0,1}(qz)+\tilde{f}_{1,0}(pz)+\tilde{f}_{1,0}(qz).
\end{align*}
Let $\tilde{f}_{2,0}(z),\tilde{f}_{1,1}(z)$ and $\tilde{f}_{0,2}(z)$
denote the Poisson generating function of $\mathbb{E}(N_n^2),
\mathbb{E}(N_nX_n)$ and $\mathbb{E}(X_n^2)$, respectively. Then we
define the Poissonized versions of the variance and the covariance
as
\begin{align*}
    \tilde{V}(z)&:=\tilde{f}_{2,0}(z)
    -\tilde{f}_{1,0}(z)^2-z\tilde{f}'_{1,0}(z)^2\\
    \tilde{C}(z)&:=\tilde{f}_{1,1}(z)
    -\tilde{f}_{1,0}(z)\tilde{f}_{0,1}(z)
    -z\tilde{f}'_{1,0}(z)\tilde{f}'_{0,1}(z)\\
    \tilde{W}(z)&:=\tilde{f}_{0,2}(z)
    -\tilde{f}_{0,1}(z)^2-z\tilde{f}'_{0,1}(z)^2.
\end{align*}
A lengthy calculation then gives
\begin{align*}
    \tilde{V}(z)&=\tilde{V}(pz)+\tilde{V}(qz)
    +\tilde{g}_{2,0}(z)\\
    \tilde{C}(z)&=\tilde{C}(pz)+\tilde{C}(qz)
    +\tilde{V}(pz)+\tilde{V}(qz)+\tilde{g}_{1,1}(z)\\
    \tilde{W}(z)&=\tilde{W}(pz)+\tilde{W}(qz)
    +2\tilde{C}(pz)+2\tilde{C}(qz)+\tilde{V}(pz)
    +\tilde{V}(qz)+\tilde{g}_{0,2}(z),
\end{align*}
where
\begin{align*}
    \tilde{g}_{2,0}(z)&:=e^{-z}\Big\{2(1+z)\left(
    \tilde{f}_{1,0}(pz)+\tilde{f}_{1,0}(qz)\right)
    -2z^2\left(p\tilde{f}'_{1,0}(pz)
    +q\tilde{f}'_{1,0}(qz)\right)\\
    &\qquad\quad+1+z-(1+2z+z^2+z^3)e^{-z}\Big\}+pqz
    \left(\tilde{f}'_{1,0}(pz)-\tilde{f}'_{1,0}(qz)\right)^2
\end{align*}
and
\begin{align*}
    \tilde{g}_{1,1}(z)&:=e^{-z}\Big\{(1+z)\left(\tilde{f}_{1,0}(pz)
    +\tilde{f}_{1,0}(qz)+\tilde{f}_{0,1}(pz)
    +\tilde{f}_{0,1}(qz)\right)\\
    &\qquad-z^2\left(p\tilde{f}'_{1,0}(pz)+q\tilde{f}'_{1,0}(qz)
    +p\tilde{f}'_{0,1}(pz)+q\tilde{f}'_{0,1}(qz)\right)\Big\}\\
    &\qquad+pqz\left(\tilde{f}'_{1,0}(pz)
    -\tilde{f}'_{1,0}(qz)\right)\left(\tilde{f}'_{1,0}(pz)
    -\tilde{f}'_{1,0}(qz)+\tilde{f}'_{0,1}(pz)
    -\tilde{f}'_{0,1}(qz)\right)
\end{align*}
and
\[
    \tilde{g}_{0,2}(z)
    :=pqz\left(\tilde{f}'_{1,0}(pz)
    -\tilde{f}'_{1,0}(qz)+\tilde{f}'_{0,1}(pz)
    -\tilde{f}'_{0,1}(qz)\right)^2.
\]
Then we have
\begin{align*}
    \mathscr{M}[\tilde{f}_{1,0};s]
    &=-\frac{(s+1)\Gamma(s)}
    {1-p^{-s}-q^{-s}}\\
    \mathscr{M}[\tilde{f}_{0,1};s]
    &=-\frac{(s+1)\Gamma(s)(p^{-s}+q^{-s})}
    {(1-p^{-s}-q^{-s})^2}.
\end{align*}
It follows that (already derived in Section~\ref{sec-size})
\[
    \frac{\mathbb{E}(N_n)}{n} = \frac{1}{h}
    + \mathscr{F}[G_{1,0}](r\log_{1/p}n)
    +o(1),
\]
where $G_{1,0}(s) = -(s+1)\Gamma(s)$. Similarly,
\begin{align*}
    \frac{\mathbb{E}(X_n)}{n} &= \left(\frac{1}{h}
    +\mathscr{F}[G_{1,0}](r\log_{1/p}n)\right)
    \frac{\log n}h \\ &\qquad+
    \frac{p\log^2 p+q\log^2 q}{h^3}+\frac{\gamma-1}{h^2}-
    \frac1h+
    \frac1h\mathscr{F}[G_{0,1}](r\log_{1/p}n) +o(1),
\end{align*}
where ($\psi$ being the derivative of $\log\Gamma$)
\[
    G_{0,1}(s)= \Gamma(s)\left(\left(\psi(s)
    +h-\frac{p\log^2p+q\log^2 q}{h}\right)(1+s)
    +1\right).
\]

By the same Mellin analysis, we obtain
\begin{align*}
    \mathscr{M}[\tilde{V};s]
    &=\frac{\Phi_1(s)+\Phi_2(s)}{1-p^{-s}-q^{-s}}\\
    \mathscr{M}[\tilde{C};s]
    &=\frac{1}{(1-p^{-s}-q^{-s})^2}
    \Big((p^{-s}+q^{-s})
    (\Phi_1(s)+\Phi_2(s))\\
    &\qquad+(1-p^{-s}-q^{-s})(G_2(s)+H_2(s))\Big)\\
    \mathscr{M}[\tilde{W};s]
    &=\frac{1}{(1-p^{-s}-q^{-s})^3}
    \Big((p^{-s}+q^{-s})(1+p^{-s}+q^{-s})(\Phi_1(s)+\Phi_2(s))\\
    &\qquad+2(p^{-s}+q^{-s})(1-p^{-s}-q^{-s})(G_2(s)+H_2(s))
    \\&\qquad+(1-p^{-s}-q^{-s})^2H_3(s)\Big),
\end{align*}
where
\begin{align*}
    \Phi_1(s)&=\mathscr{M}\left[\tilde{g}_{2,0}(z)
    -pqz\left(\tilde{f}'_{1,0}(pz)-\tilde{f}'_{1,0}(qz)\right)^2;
    s\right]\\
    G_2(s)&=\mathscr{M}\left[\tilde{g}_{1,1}(z)
    -pqz\left(\tilde{f}'_{1,0}(pz)-\tilde{f}'_{1,0}(qz)
    +\tilde{f}'_{0,1}(pz)-\tilde{f}'_{0,1}(qz)\right);
    s\right].
\end{align*}
and
\begin{align*}
    \Phi_2(s)&=\mathscr{M}\left[pqz\left(\tilde{f}'_{1,0}(pz)
    -\tilde{f}'_{1,0}(qz)\right)^2;s\right]\\
    H_2(s)&=\mathscr{M}\left[pqz\left(\tilde{f}'_{1,0}(pz)
    -\tilde{f}'_{1,0}(qz)\right)\left(
    \tilde{f}'_{0,1}(pz)-\tilde{f}'_{0,1}(qz)\right);s\right]\\
    H_3(s)&=\mathscr{M}\left[pqz\left(\tilde{f}'_{0,1}(pz)
    -\tilde{f}'_{0,1}(qz)\right)^2;s\right].
\end{align*}
From these functions, we can derive, by the same arguments we used
above, asymptotic approximations to the covariance of $N_n$ and
$X_n$, and the variance of $X_n$.

\begin{thm} The variance of the internal path length of random tries
satisfies
\begin{align*}
    \frac{\mathbb{V}(X_n)}{n} &= F_{0,2}(r\log_{1/p}n)
    \frac{(\log n)^2}{h^2}+
    + F_{0,2}^{[2]}(r\log_{1/p}n)
    \frac{\log n}h + F_{0,2}^{[3]}(r\log_{1/p}n)
    +o(1),
\end{align*}
and the covariance of $N_n$ and $X_n$ satisfies
\begin{align*}
    \frac{\mathrm{Cov}(N_n,X_n)}{n} &=
    F_{0,2}(r\log_{1/p}n)
    \frac{\log n}h +F_{1,1}^{[2]}(r\log_{1/p}n)
    +o(1),
\end{align*}
where $F_{0,2}(x)=G(-1)/h+\mathscr{F}[G](x)$ with $G$ given in
Section \ref{sec-size}, and the other $F_{\cdot,\cdot}^{[\cdot]}$'s
are either constants when $\frac{\log p}{\log q}\not\in\mathbb{Q}$
or periodic functions with computable Fourier series when
$\frac{\log p}{\log q}\in\mathbb{Q}$.
\end{thm}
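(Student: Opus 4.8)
The plan is to follow the route used in \cite{hwang10a} for the node-wise path length of random digital search trees: work throughout with the triangular system of functional equations for $\tilde V,\tilde C,\tilde W$ and the Mellin transforms already displayed above, and de-Poissonize only at the very end. The first step is to climb a JS-admissibility hierarchy. By Proposition~\ref{prop-closure} the toll $1-(1+z)e^{-z}\in\JS_{\!\!\!0,0}$, so Proposition~\ref{prop-at} (with the growth bookkeeping in its proof) gives $\tilde f_{1,0}\in\JS_{\!\!\!1,0}$; since the toll of the $\tilde f_{0,1}$-equation is then $\tilde f_{1,0}(pz)+\tilde f_{1,0}(qz)\in\JS_{\!\!\!1,0}$, the $\alpha=1$ case of the same argument yields $\tilde f_{0,1}\in\JS_{\!\!\!1,1}$. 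Running up the moment hierarchy, the functional equations for $\tilde f_{2,0},\tilde f_{1,1},\tilde f_{0,2}$ have tolls consisting of cross-products $\tilde u(pz)\tilde v(qz)$ with $\tilde u,\tilde v\in\JS$ (admissible by Proposition~\ref{prop-closure}(v)), of already-treated lower-order second moments, and of exponentially small remainders (admissible as in Section~\ref{sec-size}), so Proposition~\ref{prop-at} gives $\tilde f_{2,0},\tilde f_{1,1},\tilde f_{0,2}\in\JS$; note that one never needs a square of a JS-function to be JS, only the cross terms. The toll functions $\tilde g_{2,0},\tilde g_{1,1},\tilde g_{0,2}$ of the $\tilde V,\tilde C,\tilde W$-equations are exponentially small away from a sector plus the $\tilde\phi_2$-type pieces $pqz(\tilde f'_{1,0}(pz)-\tilde f'_{1,0}(qz))^2$ and the analogues built from $\tilde f_{0,1}$; by the integral-representation estimate used in the proof of Theorem~\ref{var}(b) these are $O(|z|)$ in the sector (and $o(|z|)$ when $\log p/\log q\notin\mathbb Q$), so all three tolls lie in $\JS$, and solving the triangular system by successive applications of Proposition~\ref{prop-at} gives $\tilde V\in\JS_{\!\!\!1,0}$, $\tilde C\in\JS_{\!\!\!1,1}$, $\tilde W\in\JS_{\!\!\!1,2}$.

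The second step is to read off the pole structure on $\Re(s)=-1$ from the displayed Mellin transforms. Near $s=-1$ one has $1-p^{-s}-q^{-s}=-h(s+1)+O((s+1)^2)$, and when $\log p/\log q\in\mathbb Q$ this factor has a simple zero at every $s=-1+\chi_k$, so $\mathscr M[\tilde V;s]$, $\mathscr M[\tilde C;s]$, $\mathscr M[\tilde W;s]$ have there poles of orders one, two and three respectively. The numerators of these transforms are regular on $\Re(s)=-1$, and using $p^{-s}+q^{-s}=1$ and $1-p^{-s}-q^{-s}=0$ at $s=-1$ their values collapse to $\Phi_1(-1)+\Phi_2(-1)=G(-1)$ and $2G(-1)$, where $G$ is the very function of Section~\ref{sec-size} (indeed $\tilde V$ here is literally the Poissonized size-variance). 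Standard Mellin asymptotics (\cite{PF120}, together with the zero geometry from \cite{PF055}) then give, uniformly for $|\arg z|\le\theta$, that $\tilde C(z)=G(-1)h^{-2}z\log z+O(z)$ with the $O(z)$-term of the shape $(\text{constant}+\text{periodic})\cdot z$, and a $z(\log z)^2$-leading expansion for $\tilde W(z)$ with leading coefficient $G(-1)/h^3$ and with $z\log z$- and $z$-coefficients again of constant-plus-periodic shape, the periodic pieces being present only in the rational case. Differentiating the Mellin asymptotics supplies the matching estimates for $\tilde C',\tilde W',\tilde W''$.

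The final step is de-Poissonization. Applying Proposition~\ref{prop-PC} to $\tilde f_{0,2},\tilde f_{1,1},\tilde f_{0,1},\tilde f_{1,0}$ and substituting $\tilde f_{0,2}=\tilde W+\tilde f_{0,1}^2+z(\tilde f'_{0,1})^2$ and $\tilde f_{1,1}=\tilde C+\tilde f_{1,0}\tilde f_{0,1}+z\tilde f'_{1,0}\tilde f'_{0,1}$, the $\Theta(n^2(\log n)^2)$ contributions of $\mathbb E(X_n^2)$ and $\mathbb E(X_n)^2$ cancel in $\mathbb V(X_n)=\mathbb E(X_n^2)-\mathbb E(X_n)^2$ — this is exactly the cancellation that the corrected Poissonized variance is built to absorb — leaving $\mathbb V(X_n)=\tilde W(n)-\tfrac{n}{2}\tilde W''(n)+(\text{terms built from }\tilde f_{0,1}\text{ and its first three derivatives})+o(n)$; since $\tilde f_{0,1}(z)\sim z\log z/h$, every such correction term is $O(\log n)=o(n)$, so $\mathbb V(X_n)=\tilde W(n)+o(n)$. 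Dividing by $n$ and writing the leading coefficient as $F_{0,2}(x)/h$ with $F_{0,2}(x)=G(-1)/h+\mathscr F[G](x)$ produces the stated expansion, and the completely analogous but one-order-simpler computation with $\tilde C$ gives $\mathrm{Cov}(N_n,X_n)=\tilde C(n)+o(n)$ and the covariance formula. Because $G(-1)/h^3>0$ and carries no oscillation, the $n(\log n)^2$-order persists whatever the arithmetic nature of $\log p/\log q$. Finally, the Fourier coefficients of the lower-order periodic functions $F_{0,2}^{[2]},F_{0,2}^{[3]},F_{1,1}^{[2]}$ are made explicit by the residue-calculus simplification used to obtain \eqref{Phi-1-chi}, now applied to $\mathscr M[\tilde C;s]$ and $\mathscr M[\tilde W;s]$ at $s=-1+\chi_k$.

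I expect the de-Poissonization step to be the main obstacle: since $\mathbb E(X_n)$ is of order $n\log n$, both $\mathbb E(X_n^2)$ and $\mathbb E(X_n)^2$ are of order $n^2(\log n)^2$ while the variance is only $n(\log n)^2$, so one must carry the Poisson--Charlier expansions of $\tilde f_{0,2}$, $\tilde f_{0,1}^2$ and $z(\tilde f'_{0,1})^2$ to enough terms both to watch everything above order $n(\log n)^2$ disappear and to be certain that no $\Theta(n)$ term has been overlooked. The bookkeeping is substantial, but it is entirely parallel to the digital-search-tree computation in \cite{hwang10a} and can be carried out with the identity established there.
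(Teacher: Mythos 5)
Your route is essentially the paper's own: the paper also passes through the triangular system for $\tilde{V},\tilde{C},\tilde{W}$, their displayed Mellin transforms, and de-Poissonization via JS-admissibility and the corrected Poissonized variance (explicitly ``the same approach we used for the node-wise path length of random digital search trees in \cite{hwang10a}''), so in structure and in all the main tools the proposal matches the intended proof, and your closing observation that the hard part is the bookkeeping of the cancellation between $\mathbb{E}(X_n^2)$ and $\mathbb{E}(X_n)^2$ is exactly where the paper leans on \cite{hwang10a}.

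One point in your second step should be corrected, because as written it would deliver a weaker statement than the theorem: in the rational case the factor $(1-p^{-s}-q^{-s})$ vanishes at \emph{every} $s=-1+\chi_k$, so $\mathscr{M}[\tilde{C};s]$ and $\mathscr{M}[\tilde{W};s]$ have double and triple poles at all of these points, not only at $s=-1$. Hence the coefficient of $z\log z$ in $\tilde{C}(z)$ and of $z(\log z)^2$ in $\tilde{W}(z)$ is itself of constant-plus-periodic shape, namely $F_{0,2}(r\log_{1/p}z)/h$ and $F_{0,2}(r\log_{1/p}z)/h^2$ with $F_{0,2}=G(-1)/h+\mathscr{F}[G]$; your intermediate claims ``$\tilde{C}(z)=G(-1)h^{-2}z\log z+O(z)$'' and ``leading coefficient $G(-1)/h^3$ for $\tilde{W}$, with periodic pieces only in the lower-order coefficients'' drop precisely the oscillation that the theorem puts in front of $(\log n)^2$ and $\log n$. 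Your final assembly does reinstate $F_{0,2}$ (modulo a power-of-$h$ slip: the coefficient of $(\log n)^2$ is $F_{0,2}/h^2$, not $F_{0,2}/h$), so this is an internal inconsistency rather than a missing idea --- the residue calculus you already invoke at all the $\chi_k$ fixes it --- but the Mellin step must be stated with the full set of higher-order poles for the argument to yield the theorem as stated. What remains true, and is the point of your remark about persistence of the $n(\log n)^2$ order, is that the non-oscillating part $G(-1)/h^3$ is present for irrational $\log p/\log q$ as well.
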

For simplicity, we give only the expressions in the symmetric case
\begin{align*}
    F_{1,1}^{[2]}(x)&=-\frac{1}{(\log 2)^2}
    \sum_{k\in\mathbb{Z}}(G'_1(-1+\chi_k)
    -G_2(-1+\chi_k)\log 2)e^{2k\pi i x}\\
    F_{0,2}^{[2]}(x)&=-\frac{2}{(\log 2)^2}
    \sum_{k\in\mathbb{Z}}(G'_1(-1+\chi_k)
    -G_2(-1+\chi_k)\log 2)e^{2k\pi i x}\\
    F_{0,2}^{[3]}(x)&=\frac{1}{(\log 2)^3}
    \sum_{k\in\mathbb{Z}}(G''_1(-1+\chi_k)
    -2G'_2(-1+\chi_k)\log 2)e^{2k\pi x},
\end{align*}
where $G_1(s)$ is given in \eqref{G1-size} and
\[
    G_2(s)=\sum_{j\ge 1}\frac{(-1)^jj
    \Gamma(s+j+1)}{(j+1)!(2^j-1)^2}{(2^j+2)}(j(j+1+s)-1).
\]

An intuitive interpretation of why the variance is of order $n(\log
n)^2$ is as follows. Any path from the root of length $k$ to an
internal node contributes $1+2+\cdots+k=O(k^2)$ to the internal path
length. Since the expected values of both internal and external path
lengths are of order $n\log n$, we see that most nodes lie at levels
of order $\log n$, and these nodes thus contribute an order $n(\log
n)^2$ to the variance.

In a completely similar manner, if $Y_n$ denotes the peripheral path
length where we change subtree-size to the sum of all internal nodes
(instead of all external nodes), then we can derive the asymptotic
approximations to the variance of $Y_n$ and the covariance of $Y_n$
and $N_n$, which are both linear
\begin{align*}
    \frac{\mathbb{V}(Y_n)}{n}
    &= F_{0,2}^{[Y]}(r\log_{1/p}n)+o(1)\\
    \frac{\mathrm{Cov}(Y_n,N_n)}{n}
    &= F_{1,1}^{[Y]}(r\log_{1/p}n)+o(1),
\end{align*}
where the $F_{\cdot,\cdot}^{[Y]}$'s are either constants when
$\frac{\log p}{\log q}\not\in\mathbb{Q}$ or computable periodic
functions when $\frac{\log p}{\log q}\in\mathbb{Q}$.

\subsection{Contention resolution in multi-access channel
using tree algorithms}

There is an abundant literature on the subject and we are specially
interested in the complexity of tree algorithms used in resolving
the contention before either transmitting information to the common
shared channel or performing certain tasks in a distributed
computing environment. The tree algorithm (originally due to
Capetanakis, Tsybakov and Mikhailov in the late 1970s) resolves the
conflict (when more than one user is sending simultaneously his
message to the common channel) by the outcome of a coin-flipping at
each contender's site, similar to the splitting rule used for
constructing a trie; see
\cite{biglieri07a,massey81a,molle93a,PF054,wagner09a} for details. The
analysis of the time needed for such algorithms to resolve the
conflict of $n$ contenders often leads to recurrences of the form
\eqref{Xn-rr} or its extensions. The expected value of the time to
resolve all conflicts, which corresponds essentially to the size of
random tries, has been widely addressed in the information-theoretic
and communication literature, but there are very few papers on the
variance; see \cite{janssen00a,kaplan85a}.

Consider the extended environment where each ``coin'' has $r$
distinct outcomes with respective probabilities $p_1,\dots, p_r$,
where $\sum_{1\le m\le r}p_m=1$ and none of them is zero. Then the
time $X_n$ to resolve the collision of $n$ contenders satisfies (see
\cite{PF054})
\begin{align}\label{tPzy}
    \tilde{P}(z,y)
    =e^{y}\prod_{1\le m\le r}\tilde{P}(p_mz,y)
    +(1-e^y)(1+z)e^{-z},
\end{align}
where $\tilde{P}(z,y) := e^{-z}\sum_{n\ge0}
\mathbb{E}(e^{X_ny})z^n/n!$. For simplicity, we consider a version
with $X_0=X_1=0$; the situation of nonzero initial conditions can be
manipulated by extending the same arguments we use (only the mean
will be altered, the variance remains the same). From \eqref{tPzy},
we obtain the functional equation for the Poisson generating
function of $\mathbb{E}(X_n)$
\[
    \tilde{f}_1(z)=\sum_{1\le m\le r}
    \tilde{f}_1(p_mz)+1-(1+z)e^{-z},
\]
with $\tilde{f}_1(0)=0$, and, similarly, for $\tilde{V}:=
\tilde{f}_2-\tilde{f}_1^2-z(\tilde{f}_1')^2$,
\[
    \tilde{V}(z)=\sum_{1\le m\le r}\tilde{V}(p_mz)+\tilde{g}(z),
\]
with $\tilde{V}(0)=0$, where
\begin{align*}
    \tilde{g}(z)&=e^{-z}\left(1+z-(1+2z+z^2+z^3)e^{-z}\right)
    \\ &\qquad+2e^{-z}
    \Biggl((1+z)\sum_{1\le m\le r}\tilde{f}_1(p_mz)
    -z^2\sum_{1\le m\le r}p_m\tilde{f}'_1(p_mz)\Biggr)\\
    &\qquad+z\sum_{1\le m<l\le r}p_mp_l
    \left(\tilde{f}'_1(p_mz)-\tilde{f}'_1(p_l z)\right)^2.
\end{align*}

Then all our analysis extends \emph{mutatis mutandis} to these
functional equations, and we have the following asymptotics for
$\mathbb{E}(X_n)$ and $\mathbb{V}(X_n)$.

Let
\[
    P(s):=\sum_{1\le m\le r}p_m^s.
\]
Then the entropy is
\[
    h:=-P'(1) = -\sum_{1\le m\le r}p_m\log p_m.
\]
As in the Bernoulli case, we need to distinguish between rational
(periodic) and irrational (aperiodic) cases. The former is
characterized either by the existence of a $\rho\in\mathbb{R}$ such
that $p_m=\rho^{e_m}$, $e_m\in{\mathbb N}$ for $1\le m\le r$, or by
the ratios $\frac{\log p_m}{\log p_l}\in\mathbb{Q}$ for all pairs
$(m,l)$.
\begin{thm}
The expected value and the variance of $X_n$ (defined in
\eqref{tPzy}) can asymptotically be approximated by
\begin{align*}
    \frac{\mathbb{E}(X_n)}{n}
    &= \frac1h + F_1(\log_{1/\rho}n) + o(1),\\
    \frac{\mathbb{V}{X_n}}{n} &= \frac{G(-1)}{h}
    +F_2(\log_{1/\rho}n)+  o(1),
\end{align*}
where both $F_1=F_2=0$ in the irrational case and ($\chi_k=
\frac{2k\pi i}{\log \rho}$)
\begin{align*}
    F_1(x) &= \frac1h\sum_{k\in\mathbb{Z}\setminus\{0\}}
    \chi_k\Gamma(-1+\chi_k) e^{2k\pi i x},\\
    F_2(x) &= \frac1h\sum_{k\in\mathbb{Z}\setminus\{0\}}
    G(-1+\chi_k) n^{-\chi_k},
\end{align*}
in the rational case, where $G=\mathscr{M}[\tilde{g};s]$ is given in
\eqref{gGs} below.
\end{thm}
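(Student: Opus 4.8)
The plan is to transfer the whole analytic machinery of Sections~\ref{sec-hada}--\ref{sec-tries} from binary to $r$-ary splitting; the one genuinely new analytic ingredient is the location of the zeros of $1-P(-s)=1-\sum_{1\le m\le r}p_m^{-s}$. First I would record the $r$-ary analogues of Propositions~\ref{prop-at} and~\ref{prop-at2}: the former by the identical majorant argument ($\tilde{B}(r)\le\sum_m\tilde{B}(p_mr)+O(r^\alpha(\log_+r)^\beta+1)$ followed by a Mellin bound for the majorant), the latter by the identical contour shift, now with $\mathscr{M}[\tilde{f};s]=G(s)/(1-P(-s))$ and $G=\mathscr{M}[\tilde{g};s]$. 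The required facts about the denominator are: $1-P(-s)$ has no zero in $\Re(s)<-1$ (there $|\sum_m p_m^{-s}|\le\sum_m p_m^{-\Re(s)}<\sum_m p_m=1$); on $\Re(s)=-1$ its zeros are exactly $s=-1$ in the irrational case and the arithmetic progression $s=-1+\chi_k$ ($\chi_k=2k\pi i/\log\rho$) in the rational case; and all remaining zeros are isolated and stay uniformly to the left of $\Re(s)=-1$. Moreover $\tfrac{d}{ds}(1-P(-s))=\sum_m p_m^{-s}\log p_m$ equals $-h$ at every $s=-1+\chi_k$ (since $p_m^{\chi_k}=1$ in the rational case), which is what the residue computations use. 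The closure properties of Proposition~\ref{prop-closure} carry over unchanged; the binary products $\tilde{f}_1(p_mz)\tilde{f}_1(p_\ell z)$ ($m\ne\ell$) are $\JS$ because $p_\ell/(1-p_m)\le1$ lets part~(v) apply after a rescaling.

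For the mean, $\tilde{g}_1(z)=1-(1+z)e^{-z}\in\JS_{\!\!\!0,0}$ with $G_1(s)=-(s+1)\Gamma(s)$ and $G_1(-1)=1$. The $r$-ary Proposition~\ref{prop-at} gives $\tilde{f}_1\in\JS$, and since $\tilde{g}_1(z)=O(1)$ there is no linear term, so the $r$-ary Proposition~\ref{prop-at2}(a) yields $\tilde{f}_1(z)/z=G_1(-1)/h+h^{-1}\sum_{k\ne0}G_1(-1+\chi_k)z^{-\chi_k}+o(1)$; Proposition~\ref{prop-PC} (with $\tau_1=0$) transfers this to $\mathbb{E}(X_n)/n=1/h+F_1(\log_{1/\rho}n)+o(1)$, the periodic part being the residue sum, i.e.\ the stated series in $\chi_k\Gamma(-1+\chi_k)$, and vanishing in the irrational case.

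For the variance, I would differentiate \eqref{tPzy} twice at $y=0$ to get the functional equations for $\tilde{f}_1$ and $\tilde{f}_2$, then verify by a purely algebraic computation---using $\tilde{g}_1'(z)=ze^{-z}$ and $\sum_{m<\ell}p_mp_\ell(u_m-u_\ell)^2=\sum_m(p_m-p_m^2)u_m^2-2\sum_{m<\ell}p_mp_\ell u_mu_\ell$---that $\tilde{V}(z):=\tilde{f}_2(z)-\tilde{f}_1(z)^2-z\tilde{f}_1'(z)^2$ satisfies $\tilde{V}(z)=\sum_m\tilde{V}(p_mz)+\tilde{g}(z)$ with the displayed $\tilde{g}$ (no Hadamard-product identity is needed here, the per-node toll being the deterministic constant $1$). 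Next, $\tilde{g}\in\JS$: its $e^{-z}$-weighted pieces are exponentially small in the sector; the pieces built from $\tilde{f}_1(p_mz)$, $\tilde{f}_1'(p_mz)$ times polynomials and $e^{-z}$ are $\JS$ by Proposition~\ref{prop-closure}; and for each cross term $z\,p_mp_\ell(\tilde{f}_1'(p_mz)-\tilde{f}_1'(p_\ell z))^2$ one shows, exactly as in the proof of Theorem~\ref{var}, that $\tilde{f}_1'(p_mz)-\tilde{f}_1'(p_\ell z)=o(1)$ in the sector (the constant $1/h$ cancels, and the periodic part of $\tilde{f}_1'$ is unchanged under the integer shift $\log_{1/\rho}p_m=-e_m$ in the rational case, resp.\ is absent in the irrational case; on the integral side $p_m^{-w-1}-p_\ell^{-w-1}$ vanishes at the common zeros $w=-1+\chi_k$, so the line may be pushed onto $\Re(w)=-1$). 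Hence $\tilde{g}(z)=O(|z|^\alpha(\log_+|z|)^\beta)$ for some $\alpha<1$, the $r$-ary Proposition~\ref{prop-at2}(a) gives $\tilde{V}(z)/z=G(-1)/h+h^{-1}\sum_{k\ne0}G(-1+\chi_k)z^{-\chi_k}+o(1)$, and Proposition~\ref{prop-PC} together with the reduction of the ``Ideas of our approach'' paragraph (the corrections $\tfrac n2\tilde{V}''(n)$ and $\tfrac{n^2}2\tilde{f}_1''(n)^2$ are $O(1)$, hence $o(n)$) yields $\mathbb{V}(X_n)/n=G(-1)/h+F_2(\log_{1/\rho}n)+o(1)$. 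Computing $G(s)=\mathscr{M}[\tilde{g};s]$ term by term---Gamma-function transforms for the $e^{-z}$-pieces and residue sums of $1/(1-P(w))$-type for the pieces containing $\tilde{f}_1$, as in the evaluation of $Y_1(s)$ and $\Phi_2(s)$ in Section~\ref{sec-size}---gives the explicit formula \eqref{gGs}.

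The main obstacle is twofold. The more routine but still delicate part is the $\JS$-bookkeeping for $\tilde{g}$, in particular controlling the $\binom{r}{2}$ cross terms and their double residue sums, and checking that they contribute only to the lower-order (in $z$) part plus a harmless piece of $G(-1)$. The genuinely new input is the qualitative structure of the zero set of $1-\sum_m p_m^{-s}$ for arbitrary $r$ and $p_1,\dots,p_r$: that on $\Re(s)=-1$ the zeros are precisely $\{-1+\chi_k\}$ in the rational case and $\{-1\}$ in the irrational case, with the rest uniformly to the left. This is the $r$-ary counterpart of the analysis in \cite{PF055} and is exactly what legitimizes the Mellin contour shift, hence the Fourier expansions $F_1$ and $F_2$.
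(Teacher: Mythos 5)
Your route is the same as the paper's: the paper itself gives no separate proof of this theorem but simply observes that the binary machinery (JS-admissibility and the closure/transfer propositions, the corrected Poissonized variance $\tilde{V}=\tilde{f}_2-\tilde{f}_1^2-z(\tilde{f}_1')^2$, and the Mellin analysis of Proposition~\ref{prop-at2} and Theorem~\ref{var}) extends \emph{mutatis mutandis} to the functional equations derived from \eqref{tPzy}, with $1-p^{-s}-q^{-s}$ replaced by $1-\sum_m p_m^{-s}$ and with the explicit computation of $G=\mathscr{M}[\tilde{g};s]$ in \eqref{gGs} done exactly as for the size of tries (the $Y_1$- and $\Phi_2$-type evaluations). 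Your treatment of the mean, of the variance equation, of the JS-bookkeeping (including the rescaled use of Proposition~\ref{prop-closure}(v) for the products $\tilde{f}_1(p_mz)\tilde{f}_1(p_\ell z)$), and of the cross terms $\tilde{f}_1'(p_mz)-\tilde{f}_1'(p_\ell z)=o(1)$ all match what the paper does implicitly.

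One concrete point needs fixing, and it is precisely the item you single out as the ``genuinely new input.'' Your own estimate $\bigl|\sum_m p_m^{-s}\bigr|\le\sum_m p_m^{-\Re(s)}<1$ for $\Re(s)<-1$ shows that \emph{no} zeros lie to the left of $\Re(s)=-1$; all zeros of $1-\sum_m p_m^{-s}$ satisfy $\Re(s)\ge-1$, and the ones not on that line lie strictly to the \emph{right} of it, so your claim that ``all remaining zeros \ldots stay uniformly to the left of $\Re(s)=-1$'' is backwards. This matters: since the large-$z$ asymptotics come from shifting the Mellin contour rightwards across $\Re(s)=-1$, the zeros to the right are exactly the obstruction. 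In the rational case they are confined to finitely many vertical lines with a genuine gap to the right of $\Re(s)=-1$, so a shift past the line is legitimate and even yields a power saving; in the irrational case their real parts accumulate at $-1$ from the right, no such shift is possible, and the $o(1)$ error must instead be obtained by the standard argument along the line $\Re(s)=-1$ (where $s=-1$ is the only zero), as in \cite{PF055,PF120} — which is what the proof of Proposition~\ref{prop-at2} appeals to. With the zero geometry stated correctly, the rest of your argument goes through as in the paper.
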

While the dominant term involving the entropy for the expected value
is well-known (see \cite{bourdon01a,PF161,janson12a}), the
corresponding term $G(-1)/h$ for the variance is far from being
intuitive. On the other hand, if we start with $X_0=a$ and $X_1=b$, 
then
\[
    \frac{\mathbb{E}(X_n)}{n}
    = (b-a)+ ((r-1)a+1)\left(
    \frac1h + F_1(\log_{1/\rho}n)\right) + o(1).
\]

The function $G$ in the Theorem is described as follows.
For $\Re(s)>-2$,
\begin{align}\label{gGs}
\begin{split}
    G(s)&=(s+1)\Gamma(s)
    \left(1-\frac{s^2+4s+8}{2^{s+3}}\right)\\
    &\qquad+2\sum_{j\ge 1}
    \frac{(-1)^jj(j(j+s+1)-1)\Gamma(j+s+1)P(j+1)}
    {(j+1)!(1-P(j+1))}+\Phi_2(s),
\end{split}
\end{align}
where $\Phi_2(s)\equiv 0$ if $p_m=1/r$ for $1\le m\le r$ (the
symmetric case), and
\begin{align*}
	\Phi_2(-1+\chi_k)&=\frac{\Gamma(2+\chi_{k})}
	{2^{2+\chi_k}}-2\sum_{j\ge 1}
	\frac{(-1)^j\Gamma(j+\chi_k+1)P(j+1)}{(j-1)!(1-P(j+1))}\\
	&\qquad-\begin{cases}{\displaystyle\frac{1}{h}
	\sum_{j\in{\mathbb Z}}\Gamma(\chi_j+1)
	\Gamma(\chi_{k-j}+1)},
	&\text{in the rational case}\\
	0,&\text{in the irrational case,}
	\end{cases}
\end{align*}
in the asymmetric case. Consequently,
\begin{align*}
	G(-1+\chi_k)&=\chi_k\Gamma(-1+\chi_k)
	\left(1-\frac{\chi_k+3}{2^{1+\chi_k}}\right)\\
	&\qquad -2\sum_{j\ge 1}\frac{(-1)^j(j+1+\chi_k)
	\Gamma(j+\chi_k)P(j+1)}{(j-1)!(j+1)
	\left(1-P(j+1)\right)}\\
	&\qquad -\begin{cases}{\displaystyle \frac{1}{h}
	\sum_{j\in{\mathbb Z}}\Gamma(\chi_j+1)\Gamma(\chi_{k-j}+1)},
	&\text{in the rational case};\\
	0,&\text{in the irrational case.}
	\end{cases}
\end{align*}
which reduces to \eqref{var-size-G1} and \eqref{var-size} in the 
Bernoulli model ($r=2$).

When $p_m=1/r$ for $1\le m\le r$
\begin{align*}
    G(s) &= (s+1)\Gamma(s) \left(1-\frac{
    s^2+4s+8}{2^{s+3}}\right) \\
    &\qquad+2\sum_{j\ge1}\frac{(-1)^j j\Gamma(j+1+s)}
    {(j+1)!(b^j-1)}\left(j(j+1+s)-1\right);
\end{align*}
compare \eqref{v-ss} and \eqref{v-ssk}. Thus the 
average value of the periodic function is given by 
\[
    \frac{G(-1)}{\log b} = \frac1{4\log b}
    +\frac2{\log b} \sum_{k\ge1} \frac{(-1)^k(k-1)}{b^k-1}
    = \frac1{4\log b}
    +\frac2{\log b} \sum_{k\ge1} \frac1{(b^k+1)^2}.
\]
This is consistent with the expression derived in \cite{janssen00a}
\[
    \frac1{2\log b} - \frac1{(\log b)^2}+
    \frac2{\log b}\sum_{k\ge1} \frac1{b^k+1}
    - \frac{4\pi^2}{(\log b)^3}\sum_{k\ge1}
    \frac{k}{\sinh\frac{2k\pi^2}{\log b}}.
\]
Equating the two expressions leads to the identity
\[
    \frac1{2} - \frac1{\log b}+2\sum_{k\ge1} \frac1{b^k+1}
    = \frac14+2\sum_{k\ge1} \frac{1}{(b^k+1)^2}
    +\frac{4\pi^2}{(\log b)^2}\sum_{k\ge1}
    \frac{k}{\sinh\frac{2k\pi^2}{\log 2}},
\]
which generalizes \eqref{id}. Our Fourier series for $F_2$ is new
even in this simple case.

For many other concrete examples, see \cite{biglieri07a,jacquet85a,
massey81a,PF054,molle93a,wagner09a} and the references therein.

\section{PATRICIA Tries}
\label{sec-patricia}

In typical random tries, internal nodes at successive levels may
have only one descendant (corresponding to the extreme probabilities
when binomial distribution assumes $0$ and $n$), resulting in an
increase in storage. Indeed, the expected number $\mu_n$ of internal
nodes under the initial condition $\mu_1=0$ is asymptotic to
$(h^{-1} +\mathscr{F}[G](r\log_{1/p}n))n$ (see
Section \ref{sec-size}). Thus the expected number of internal nodes
with only one child is asymptotic to
$(h^{-1}-1+\mathscr{F}[G](r\log_{1/p}n))n$. In the symmetric case, the
leading constant (neglecting the fluctuation term) is about $1/\log
2-1\approx .4427$, about $44\%$ extra space being needed, and this is
the minimum when $p$ varies between $0$ and $1$. The idea of
PATRICIA\footnote{PATRICIA is the acronym of ``practical algorithm
to retrieve information coded in alphanumeric".} tries arose when
there was a need to compress such a one-child-in-one-generation
pattern; see \cite{knuth98a,morrison68a}. When removing all such
nodes, the resulting tree has $n-1$ internal nodes (for $n$ external
nodes). See \cite{wagner10a} for an analysis connected to unary
nodes of random tries, and \cite{bourdon01a,devroye05a,janson12a,
kirschenhofer89a,schachinger95b} for other linear shape measures.

Under the same Bernoulli model, we can construct random PATRICIA
tries by using the same rule for constructing an ordinary trie but
compress all internal nodes with only one descendant. If $X_n$
represents an additive shape parameter in a random PATRICIA trie of
size $n$, then, for $n\ge 2$,
\begin{align}\label{Xn-PT}
    X_n\stackrel{d}{=}
    X_{I_n'}+X_{n-I_n'}^{*}+T_n,
\end{align}
where
\[
    \mathbb{P}(I_n'=k)=\pi_{n,k}'
    :=\frac{\binom{n}{k}p^kq^{n-k}}{1-p^n-q^n},
    \qquad(k=1,\dots,n-1),
\]
and the $X_n^*$'s are independent copies of $X_n$. Since we are
mainly interested in the variance, we may assume that $X_0=X_1=0$.
This then translates into the recurrence for the moment-generating
functions (assuming $T_n$ independent of $X_n$)
\[
    M_n(y)= \mathbb{E}(e^{T_ny})\sum_{1\le k< n}
    \pi_{n,k}'M_k(y)M_{n-k}(y)
    \qquad (n\ge 2),
\]
with $M_0(y)=M_1(y)=1$. It follows that the Poisson generating
function $\tilde{f}_1$ of $\mathbb{E}(X_n)$ satisfies the functional
equation
\begin{align}\label{fe-pat-trie}
    \tilde{f}_1(z)=\tilde{f}_1(pz)+\tilde{f}_1(qz)
    +\tilde{g}_1(z)-e^{-qz}\tilde{g}_1(pz)
    -e^{-pz}\tilde{g}_1(qz),
\end{align}
with $\tilde{f}_1(0)=\tilde{f}_1'(0)=0$, where $\tilde{g}_1$
represents the Poisson generating function of $\mathbb{E}(T_n)$. For
convenience, we also assume $\tilde{g}_1(0)=\tilde{g}_1'(0)=0$.

The same tools we developed for tries readily apply to
\eqref{fe-pat-trie} and the same asymptotic pattern holds.

\begin{thm}\label{pmean}
Let $0<\theta<\pi/2, \alpha<1$ and $\beta\in\mathbb{R}$.
\begin{itemize}

\item[(a)] If more precisely $\tilde{g}_1\in
\JS_{\!\!\!\alpha,\beta}$, then
\[
    \frac{\mathbb{E}(X_n)}{n} =
    \frac{G_1(-1)}{h} + \mathscr{F}[G](r\log_{1/p}n)
    +o(1),
\]
where $G_1(s)=\mathscr{M}[\tilde{g}_1(z)-e^{-qz}
\tilde{g}_1(pz)-e^{-pz}\tilde{g}_1(qz);s]$.

\item[(b)] If $\tilde{g}_1 \in \JS$ and $\tilde{g}_1(z)=cz+O(\vert
z\vert^\alpha\left(\log_{+}\vert z\vert\right)^{\beta})$ uniformly
for $\vert\arg(z)\vert\le\theta$, then
\begin{align*}
    \frac{\mathbb{E}(X_n)}{n} &= \frac{c}{h}\log n
    +\frac{d}{h}+\frac{p\log^2p+q\log^2 q}
    {2h^2} + \mathscr{F}[G_1](r\log_{1/p}n)
    +o(1),
\end{align*}
where $G_1(s)$ is the meromorphic continuation of
$\mathscr{M}[\tilde{g}_1(z)-e^{-qz}\tilde{g}_1(pz)
-e^{-pz}\tilde{g}_1(qz);s]$ and $d=\lim_{s\rightarrow
-1}(G_1(s)+c/(s+1))$.
\end{itemize}
\end{thm}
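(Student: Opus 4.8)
The plan is to absorb the two PATRICIA correction terms into the toll function, thereby reducing \eqref{fe-pat-trie} to an ordinary trie functional equation, and then to run the argument exactly as in the proof of Theorem~\ref{mean}. First I would set
\[
    \tilde{G}_1(z) := \tilde{g}_1(z)-e^{-qz}\tilde{g}_1(pz)
    -e^{-pz}\tilde{g}_1(qz),
\]
so that \eqref{fe-pat-trie} becomes $\tilde{f}_1(z)=\tilde{f}_1(pz)+\tilde{f}_1(qz)+\tilde{G}_1(z)$ with $\tilde{f}_1(0)=\tilde{f}_1'(0)=0$, and (using $\tilde{g}_1(0)=\tilde{g}_1'(0)=0$) also $\tilde{G}_1(0)=\tilde{G}_1'(0)=0$. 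This is precisely the setting of Propositions~\ref{prop-PC}, \ref{prop-at}, and \ref{prop-at2}, so everything reduces to controlling $\tilde{G}_1$.

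The only genuinely new ingredient is to check that the two correction terms are harmless. Since $e^{-w}\in\JS$ (immediate from the definition, or a variant of Proposition~\ref{prop-closure}(i)), the closure properties Proposition~\ref{prop-closure}(iii),(v) give $e^{-qz}\tilde{g}_1(pz),\,e^{-pz}\tilde{g}_1(qz)\in\JS$, whence $\tilde{G}_1\in\JS$ by part~(iv). Crucially, for $|\arg(z)|\le\theta<\pi/2$ one has $\Re(pz),\Re(qz)\ge|z|\cos\theta>0$, so both corrections — including their linear parts $cpz\,e^{-qz}$ and $cqz\,e^{-pz}$ in case~(b) — are exponentially small throughout the sector. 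Hence $\tilde{G}_1$ inherits the sectorial growth of $\tilde{g}_1$: in case~(a), $\tilde{G}_1\in\JS_{\!\!\!\alpha,\beta}$ and $\tilde{G}_1(z)=O(|z|^{\alpha}(\log_{+}|z|)^{\beta})$; in case~(b), $\tilde{G}_1(z)=cz+O(|z|^{\alpha}(\log_{+}|z|)^{\beta})$. On the Mellin side the same smallness shows $\mathscr{M}[e^{-qz}\tilde{g}_1(pz);s]$ and $\mathscr{M}[e^{-pz}\tilde{g}_1(qz);s]$ are analytic for $\Re(s)>-2$ and continue meromorphically to the left with poles only at $s=-2,-3,\dots$; therefore $G_1(s):=\mathscr{M}[\tilde{G}_1;s]$ has the same local behaviour on and near $\Re(s)=-1$ as $\mathscr{M}[\tilde{g}_1;s]$ — analytic there in case~(a), and with a single simple pole at $s=-1$ of residue $-c$ and $d=\lim_{s\to-1}(G_1(s)+c/(s+1))$ in case~(b).

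With $\tilde{G}_1$ under control, the conclusion follows just as for Theorem~\ref{mean}. Proposition~\ref{prop-at} promotes $\tilde{G}_1\in\JS$ to $\tilde{f}_1\in\JS$; Proposition~\ref{prop-PC}, applied with $k$ large enough, then yields $\mathbb{E}(X_n)/n=\tilde{f}_1(n)/n+o(1)$ (the $\tau_j$-terms with $j\ge2$ contribute $o(n)$ after division by $n$ since $\tilde{f}_1^{(j)}(n)\asymp n^{-j}\tilde{f}_1(n)$ and $\tau_1=0$). Finally, feeding $\tilde{f}_1(z)=\tilde{f}_1(pz)+\tilde{f}_1(qz)+\tilde{G}_1(z)$ into Proposition~\ref{prop-at2}: in case~(a), part~(a) gives $\tilde{f}_1(z)/z=G_1(-1)/h+\mathscr{F}[G_1](r\log_{1/p}z)+o(1)$; in case~(b), part~(b) gives $\tilde{f}_1(z)/z=(c/h)\log z+h_0+\mathscr{F}[G_1](r\log_{1/p}z)+o(1)$ with $h_0$ the constant in \eqref{h0}. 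Specializing $z=n$ and combining with the de-Poissonization estimate above produces the two stated formulas.

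I do not expect a real obstacle here: the proof is an adaptation of the trie case with no new analytic difficulty, and the one place that genuinely deserves care is the bookkeeping of the second paragraph — that the PATRICIA corrections lie in $\JS$, are exponentially small in every sector $|\arg(z)|\le\theta<\pi/2$, and contribute no singularities of $G_1$ on the critical line $\Re(s)=-1$. Once that is recorded, Propositions~\ref{prop-PC}, \ref{prop-at}, and \ref{prop-at2} do the rest.
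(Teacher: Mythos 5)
Your proposal is correct and matches the paper's intended argument: the paper itself omits the proof of Theorem~\ref{pmean}, noting only that the method is the same as for Theorem~\ref{mean}, and your reduction --- absorbing the PATRICIA corrections $e^{-qz}\tilde{g}_1(pz)+e^{-pz}\tilde{g}_1(qz)$ into the toll function, checking via Proposition~\ref{prop-closure} that they are JS-admissible and exponentially small throughout the sector $|\arg(z)|\le\theta<\pi/2$ (hence contribute no singularity of $G_1$ on $\Re(s)=-1$), and then invoking Propositions~\ref{prop-at}, \ref{prop-PC} and \ref{prop-at2} --- is exactly that method with the omitted bookkeeping supplied. One cosmetic remark: your route yields the constant term $d/h + c(p\log^2p+q\log^2 q)/(2h^2)$, consistent with \eqref{h0}, whereas the theorem statement writes $(p\log^2p+q\log^2 q)/(2h^2)$ without the factor $c$; this appears to be a slip in the paper's statement (harmless in its applications, where $c=1$) rather than a defect of your argument.
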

Since the method of proof is the same as that of Theorem~\ref{mean},
we omit the details.

For the variance of $X_n$, we have, using the same notations,
\[
    \tilde{V}_X(z)
    =\tilde{V}_X(pz)+\tilde{V}_X(qz)
    +\tilde{V}_T(z)+\tilde{\phi}_0(z)
    +\tilde{\phi}_1(z)+\tilde{\phi}_2(z),
\]
where
\begin{align*}
    \tilde{\phi}_0(z)&=-e^{-qz}\tilde{g}_2(pz)
    -e^{-pz}\tilde{g}_2(qz)+2\tilde{g}_1(z)
    \left(e^{-qz}\tilde{g}_1(pz)+e^{-pz}\tilde{g}_1(qz)\right)\\
    &\quad-2z\tilde{g}'_1(z)\left(qe^{-qz}\tilde{g}_1(pz)
    +pe^{-pz}\tilde{g}_1(qz)-pe^{-qz}\tilde{g}'_1(pz)
    -qe^{-pz}\tilde{g}'_1(qz)\right)\\
    &\quad-z\left(qe^{-qz}\tilde{g}_1(pz)
    +pe^{-pz}\tilde{g}_1(qz)-pe^{-qz}\tilde{g}'_1(pz)
    -qe^{-pz}\tilde{g}'_1(qz)\right)^2\\
    &\quad-\left(e^{-qz}\tilde{g}_1(pz)
    +e^{-pz}\tilde{g}_1(qz)\right)^2,
\end{align*}
and
\begin{align*}
    \tilde{\phi}_1(z)&=\tilde{h}_2(z)-2\tilde{g}_1(z)
    \left(\tilde{f}_1(pz)+\tilde{f}_1(qz)\right)-2z\tilde{g}_1'(z)
    \left(p\tilde{f}_1'(pz)+q\tilde{f}_1'(qz)\right)\\
    &\quad+2\left(e^{-qz}\tilde{g}_1(pz)
    +e^{-pz}\tilde{g}_1(qz)\right)\left(\tilde{f}_1(pz)
    +\tilde{f}_1(qz)\right)\\
    &\quad-2z\left(qe^{-qz}\tilde{g}_1(pz)
    +pe^{-pz}\tilde{g}_1(qz)-pe^{-qz}\tilde{g}'_1(pz)
    -qe^{-pz}\tilde{g}'_1(qz)\right)\\
    &\qquad\qquad \times
    \left(p\tilde{f}'_1(pz)+q\tilde{f}'_1(qz)\right)\\
    \tilde{\phi}_2(z)&=pqz\left(\tilde{f}_1'(pz)
    -\tilde{f}_1'(qz)\right)^2.
\end{align*}
Here $\tilde{h}_2$ is given by
\begin{align*}
    \tilde{h}_2(z) &=2e^{-z}
    \sum_{n\ge 0}\mathbb{E}(T_n)\sum_{0\le j\le
    n}\pi_{n,j}(\mathbb{E}(X_j)+ {\mathbb
    E}(X_{n-j}))\frac{z^n}{n!}\\
    &\qquad -2e^{-z}\sum_{n\ge 0}(p^n+q^n){\mathbb
    E}(T_n)\mathbb{E}(X_n)\frac{z^n}{n!}.
\end{align*}
Note that, by Propositions~\ref{prop-closure} and \ref{prop-at}, if
$\tilde{g}_1\in\JS$, then $\tilde{f}_1\in\JS$, which in turn
implies, by Proposition~\ref{prop-hada}, that $\tilde{h}_2\in \JS$.
Consequently, if $\tilde{g}_1 \in\JS$ and $\tilde{g}_2\in\JS$, then
both $\tilde{f}_1\in\JS$ and $\tilde{f}_2\in\JS$. Thus our approach
applies to $\mathbb{V}(X_n)$.

\begin{thm}\label{pvariance}
Let $0<\theta<\pi/2, \alpha<1$ and $\beta\in\mathbb{R}$. Assume
$\tilde{g}_1, \tilde{g}_2\in\JS$ and $\tilde{V}_T(z)
=O\left(|z|^{\alpha}(\log_{+}|z|)^{\beta}\right)$ for
$|\arg(z)|\le\theta$.
\begin{itemize}
\item[(a)] If $p=q=1/2$, and
$\tilde{g}_1\in\JS_{\!\!\!\alpha,\beta}$ or
$\tilde{g}_1\in\JS_{\!\!\!1,0}$. Then
\[
    \frac{\mathbb{V}(X_n)}{n}
    =\frac{1}{\log 2}\sum_{k\in\mathbb{Z}}
    G(-1+\chi_k)n^{-\chi_k} +o(1),
\]
where $G(s)=\mathscr{M}[\tilde{V}_T(z)+\tilde{\phi}_0(z)
+\tilde{\phi}_1(z);s]$.

\item[(b)] Assume $p\ne q$.
\begin{itemize}
\item[(i)] If $\tilde{g}_1\in\JS_{\!\!\!\alpha,\beta}$, then
\[
    \frac{\mathbb{V}(X_n)}{n}=
    \frac{G(-1)}{h} + \mathscr{F}[G](r\log_{1/p}n)
    +o(1),
\]
where $G(s)=\Phi_1(s)+\Phi_2(s)$ with
$\Phi_1(s)=\mathscr{M}[\tilde{V}_T(z) +\tilde{\phi}_0(z)
+\tilde{\phi}_1(z)]$ and $\Phi_2(s)$ is an analytic continuation of
$\mathscr{M} [\tilde{\phi}_2;s]$.

\item[(ii)] If $\tilde{g}_1(z)=z+O(\vert z\vert^\alpha
\left(\log_{+}\vert z\vert\right)^{\beta})$ uniformly for
$\vert\arg(z)\vert\le\theta$. Then
\begin{align*}
    \frac{\mathbb{V}(X_n)}{n}
    &= \frac{pq\log^2(p/q)}{h^3}\,\log n
    +\frac{d}{h}+\frac{p\log^2p+q\log^2q}{2h^2}
    \\&\qquad  + \mathscr{F}[G](r\log_{1/p}n)
    +o(1).
\end{align*}
Here $G(s)=\Phi_1(s)+\Phi_2(s)$ with $\Phi_1(s)$ as above,
$\Phi_2(s)$ is a meromorphic continuation of $\mathscr{M}
[\tilde{\phi}_2;s]$ and $d=\lim_{s\rightarrow-1}
(G(s)+pq\log^2(p/q)/(h^2(s+1)))$.
\end{itemize}
\end{itemize}
\end{thm}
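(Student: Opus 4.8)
The plan is to run the argument for Theorem~\ref{pvariance} in strict parallel with that of Theorem~\ref{var}, the one genuinely new toll term $\tilde{\phi}_0$ turning out to be the easiest to control. First I would establish the JS-admissibility of the Poisson generating functions of the first two moments. Since $\tilde{g}_1\in\JS$, Proposition~\ref{prop-closure} shows that the effective toll $\tilde{g}_1(z)-e^{-qz}\tilde{g}_1(pz)-e^{-pz}\tilde{g}_1(qz)$ is again in $\JS$ (products with $e^{-pz},e^{-qz}$ and finite sums preserve $\JS$), so Proposition~\ref{prop-at} applied to \eqref{fe-pat-trie} gives $\tilde{f}_1\in\JS$; as noted after the display defining $\tilde{h}_2$, Proposition~\ref{prop-hada} then yields $\tilde{h}_2\in\JS$, and with $\tilde{g}_2\in\JS$ one gets $\tilde{f}_2\in\JS$. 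Consequently Proposition~\ref{prop-PC} reduces $\mathbb{V}(X_n)$ to the asymptotics of $\tilde{V}_X(n)$ and its first few derivatives, exactly as in the ``Ideas of our approach'' paragraph, so the work becomes the analysis of the functional equation satisfied by $\tilde{V}_X$.

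Next I would show that each of $\tilde{\phi}_0,\tilde{\phi}_1,\tilde{\phi}_2$ is ``small.'' The term $\tilde{\phi}_0$ is a finite sum of products of $e^{-pz}$ or $e^{-qz}$ with polynomially bounded factors ($\tilde{g}_i,\tilde{g}_i'$ and their arguments scaled by $p,q$), hence exponentially small for $\Re(z)>0$, so its Mellin transform is analytic and, by the Exponential Smallness Lemma, exponentially small at $c\pm i\infty$ throughout a strip containing $\Re(s)=-1$. For $\tilde{\phi}_1$ I would substitute the Hadamard-product expansion \eqref{hfg-hada} into $\tilde{h}_2$: its leading part is exactly $2\tilde{g}_1(z)(\tilde{f}_1(pz)+\tilde{f}_1(qz))+2z\tilde{g}_1'(z)(p\tilde{f}_1'(pz)+q\tilde{f}_1'(qz))$, which is cancelled by construction in $\tilde{\phi}_1$, leaving $\tilde{\phi}_1=O(|z|^{\alpha-1}(\log_{+}|z|)^{\beta})$ (or $O(\log_{+}|z|)$ when $\tilde{g}_1\in\JS_{1,0}$), the remaining $e^{-pz},e^{-qz}$-pieces being exponentially small. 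The term $\tilde{\phi}_2=pqz(\tilde{f}_1'(pz)-\tilde{f}_1'(qz))^2$ has the same shape as in the trie case, and since the PATRICIA modification changes only the numerator $G_1$ in the inverse Mellin representation of $\tilde{f}_1'$ while the denominator stays $1-p^{-w}-q^{-w}$, the estimates $\tilde{f}_1'(pz)-\tilde{f}_1'(qz)=o(1)$ in case~(b)(i) and $=\log(p/q)/h+o(1)$ in case~(b)(ii) carry over verbatim; thus $\tilde{\phi}_2\equiv0$ in case~(a), $\tilde{\phi}_2=o(|z|)$ in case~(b)(i), and $\tilde{\phi}_2=pq\log^2(p/q)z/h^2+o(|z|)$ in case~(b)(ii).

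Then I would Mellin-transform the equation for $\tilde{V}_X$ to get $\mathscr{M}[\tilde{V}_X;s]=G(s)/(1-p^{-s}-q^{-s})$ with $G(s)=\mathscr{M}[\tilde{V}_T+\tilde{\phi}_0+\tilde{\phi}_1;s]+\Phi_2(s)$, the first summand being analytic on $\Re(s)=-1$ and exponentially small there. For $\Phi_2(s)$ I would write the double-integral representation as in \eqref{Phi-2-s} and use that, when $\frac{\log p}{\log q}\in\mathbb{Q}$, the ``trivial'' zeros $2rk\pi i/\log p$ of $1-p^{1-w}-q^{1-w}$ are also zeros of the numerator $p^{-w}-q^{-w}$, so the contour may be pushed onto the imaginary axis; summing residues over the (equally spaced) zeros of $1-p^{-s}-q^{-s}$ from \cite{PF055} gives a meromorphic continuation of $\Phi_2$ past $\Re(s)=-1$ that is analytic on that line in case~(b)(i) and has there a single simple pole, at $s=-1$ with residue $-pq\log^2(p/q)/h^2$, in case~(b)(ii). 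Mellin inversion and the known zero properties then furnish the asymptotics of $\tilde{V}_X(z)$ (the stated Fourier series, $0$ in the irrational case, plus the $\log z$-term and constants in case~(b)(ii)) as in Proposition~\ref{prop-at2}; de-Poissonizing through Proposition~\ref{prop-PC} and assembling the contributions of $-\frac{n}{2}\tilde{V}_X''(n)-\frac{n^2}{2}\tilde{f}_1''(n)^2$ (together with the mean expansion from Theorem~\ref{pmean}) produces the claimed formulas, with the displayed value of $d$ coming from the Laurent coefficient of $G$ at $s=-1$.

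I expect the main obstacle to be bookkeeping rather than a new idea: one must verify that every building block of $\tilde{h}_2$, $\tilde{\phi}_0$ and $\tilde{\phi}_1$ is JS-admissible so that the delicate Proposition~\ref{prop-hada} applies, and then check that the leading terms produced by \eqref{hfg-hada} genuinely cancel against the terms explicitly subtracted in $\tilde{\phi}_1$, leaving a residual error of order $|z|^{\alpha-1}(\log_{+}|z|)^{\beta}$ that does not perturb the analyticity of $G(s)$ on $\Re(s)=-1$. The continuation of $\Phi_2$ relies on the fine zero structure of $1-p^{-s}-q^{-s}$ but is a direct transcription of the trie case, and the new term $\tilde{\phi}_0$, being exponentially small, adds nothing beyond a longer list of Mellin transforms to be bounded.
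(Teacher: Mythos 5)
Your proposal is correct and follows essentially the same route as the paper, which in fact omits the proof of Theorem~\ref{pvariance} with the remark that it repeats the arguments of Theorem~\ref{var}: the admissibility chain $\tilde{g}_1\in\JS\Rightarrow\tilde{f}_1\in\JS\Rightarrow\tilde{h}_2\in\JS$ via Propositions~\ref{prop-closure}, \ref{prop-at} and \ref{prop-hada}, the exponential smallness of the new term $\tilde{\phi}_0$, the Hadamard-product cancellation leaving $\tilde{\phi}_1$ small, and the treatment of $\tilde{\phi}_2$ by the inverse-Mellin representation of $\tilde{f}_1'(pz)-\tilde{f}_1'(qz)$ are exactly the intended steps. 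Your sketch merely makes explicit the bookkeeping the paper leaves implicit, so no genuinely different idea is involved.
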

The proof follows the same arguments as that of Theorem~\ref{var}
and is omitted.

Consider the external path length, which satisfies \eqref{Xn-PT}
with $T_n=n$. In this case, we have
\[
    \tilde{g}_1(z)=z(1-e^{-z}),
    \qquad \tilde{g}_2(z)=z(1-e^{-z})+z^2,
\]
and
\[
    \tilde{V}_T(z)=e^{-z}(z(1-e^{-z})+z^2(1-z)e^{-z}).
\]
Also
\begin{align*}
    \tilde{\phi}_1(z)&=-2zpq\left((zp-1)e^{-pz}
    \tilde{f}'_1(pz)+(zq-1)e^{-qz}\tilde{f}'_1(qz)
    \right. \\ &\qquad\qquad \left.
    +(zp+1)e^{-qz}\tilde{f}'_1(pz)
    +(zq+1)e^{-pz}\tilde{f}'_1(qz)\right)\\
    &\qquad\qquad +2qze^{-pz}\tilde{f}_1(pz)
    +2pze^{-qz}\tilde{f}_1(qz).
\end{align*}

Observe that
\[
    G_1(s) :=
    \mathscr{M}[\tilde{g}_1(z)-e^{-qz}\tilde{g}_1(pz)
    -e^{-pz}\tilde{g}_1(qz);s]=-\Gamma(s+1)
    \left(qp^{-s-1}+pq^{-s-1}\right).
\]
Thus, by Theorem \ref{pmean},
\begin{align*}
    \frac{\mathbb{E}(X_n)}{n}&= \frac1h\log n
    +\frac{\gamma}{h}
    +\frac{p\log^2 p+q\log^2 q}{2h^2}-1
    + \mathscr{F}[G_1](r\log_{1/p}n)
    +o(1).
\end{align*}

Now by Theorem \ref{pvariance}, the variance satisfies
\[
    \frac{\mathbb{V}(X_n)}{n} =
    \frac{G(-1)}{h} + \mathscr{G}[G](r\log_{1/p}n)+o(1),
\]
where $G=\Phi_1+\Phi_2$, as described in Theorem~\ref{pvariance}.
Expressions can be derived for $G$. For brevity, consider only the
symmetric case for which we have
\begin{align*}
    G(s)=\Phi_1(s)&=\Gamma(s+1)\left(2^{s+1}
    (s+2)-\frac{s^2+3s+6}{4}\right)
    \\ &\qquad +2^{s+2}
    \sum_{j\ge 1}\frac{(-1)^j\Gamma(s+j+2)}{(j-1)!(2^j-1)}.
\end{align*}
Note that the last series has the alternative form
\[
    \sum_{j\ge 1}\frac{(-1)^j\Gamma(s+j+2)}{(j-1)!(2^j-1)}
    = -\Gamma(s+3)\sum_{j\ge1}\frac1{2^j(1+2^{-j})^{3+s}}.
\]
Hence, the mean value of the periodic function is given by
\[
    1+\frac{3}{4\log 2}+\frac{2}{\log 2}
    \sum_{j\ge 1}\frac{1}{2^j(1+2^{-j})^2}
    \approx 0.36132\,60597\,81678\cdots
\]
which is the same as that obtained in \cite{kirschenhofer89a} with a
different expression (equating our expression with theirs gives the
same identity \eqref{id}).

\section{Conclusions}
\label{sec-conclusion}

The prevalent appearance in diverse modeling contexts and high
concentration of the binomial distribution make BSPs a distinctive
subject full of featured properties and numerous extensions.
Periodic oscillation is among the phenomena for which analytic tools
proved to be a successful bridge between theory and practical
observations. The analytic methodology developed in this paper,
based largely on earlier works founded by Flajolet and his coauthors
and aiming at clarifying the periodic oscillation of the variance,
is itself easily amended for other circumstances, including
particularly the case of quadratic shape measures such as the Wiener
index (see \cite{fuchs12c}) or the analysis of partial-match queries
(see \cite{fuchs10a}). The combination of Mellin analysis and
analytic de-Poissonization (operated at the more abstract level of
admissible functions) proves once again to be powerful tools for
unriddling the intrinsic complexity of the asymptotic variance, and
provides an efficient mechanical \emph{art of conjecturing} and
proving in more general contexts the structure of the variance. More
developments will be discussed in a subsequent paper.

\bibliographystyle{acm}
\bibliography{tries,PF-MSN}

\end{document}